\newtheorem{thm}{Theorem}[section]
\newtheorem{prop}[thm]{Proposition}
\newtheorem{lem}[thm]{Lemma}
\newtheorem{claim}[thm]{Claim}
\newtheorem{cor}[thm]{Corollary}
\newtheorem{defn}[thm]{Definition}
\newtheorem*{ack*}{Acknowledgment}
\theoremstyle{remark}
\newtheorem{rem}{Remark}[section]
\newtheorem*{ex*}{Example}
\crefname{thm}{Theorem}{Theorems}
\crefname{lem}{Lemma}{Lemmas}
\newcommand{\ie}{\emph{i.e.}\xspace}
\newcommand{\eg}{\emph{e.g.}\xspace}
\newcommand*{\dual}[1]{{#1^*}}
\newcommand*{\switch}[1]{\widetilde{#1}}
\newcommand*{\unbud}[1]{#1^\circ}
\newcommand*{\transpose}[1]{\overline{#1}}
\newcommand*{\undir}[1]{\overline{#1}}
\newcommand*{\complementMap}[1]{#1^\complement}
\newcommand*{\cut}[1]{{\text{Open}\left(#1\right)}}
\DeclareMathOperator{\face}{face}
\DeclareMathOperator{\vertex}{vertex}
\DeclareMathOperator{\nextF}{NaF}
\DeclareMathOperator{\nextV}{NaV}
\DeclareMathOperator{\prevV}{PaV}
\DeclareMathOperator{\prevF}{PaF}
\DeclareMathOperator{\nextE}{NE}
\DeclareMathOperator{\prevE}{PE}
\newcommand*{\refine}[2]{{#1\preceq #2}}
\newcommand*{\coarsen}[2]{{#1\succeq #2}}
\title{
Blossoming bijection for higher-genus maps.}
\author{Mathias Lepoutre \thanks{\href{mailto:mathias.lepoutre@polytechnique.edu}{mathias.lepoutre@polytechnique.edu}
. Partially supported by the project ANR-16-CE40-0009.}}
\begin{document}

\maketitle
\begin{abstract}
In 1997, Schaeffer described a bijection between Eulerian planar maps and some trees. In this work we generalize his work to a bijection between bicolorable maps on a surface of any fixed genus and some unicellular maps with the same genus.
An important step of this construction is to exhibit a canonical orientation for maps, that allows to apply the same local opening algorithm as Schaeffer.

As an important byproduct, we obtain the first bijective proof of a result of Bender and Canfield from 1991, when they proved that the generating series of maps in higher genus is a rational function of the generating series of planar maps.

\textbf{keywords:} combinatorics, bijection, maps, higher genus, blossoming tree, rationality.
\end{abstract}

\tableofcontents

\section{Introduction} 
A \emph{map} of genus $g$ is a proper embedding of a graph in $\mathbb{S}_g$, the torus with $g$ holes. 
In addition to be rich combinatorial objects by themselves, maps have many links with various fields of algebra and mathematical physics (\eg \cite{LanZvo04, Eyna16}). The probabilistic approach of maps, leading to the definition of continuous surfaces such as the Brownian map, is also a very active domain. The structural study of maps is a deep subject, and it seems that it is always interesting to have a better understanding of maps, given the very diverse related topic.

Planar maps (or maps of genus 0) have been studied extensively since the pioneering work of Tutte in the sixties \cite{Tutt63}. 
In a series of work, Tutte obtained remarkable formulas for many families of maps. 
His techniques relies on some recurrence relations for maps, obtained through combinatorial decomposition, and some clever manipulations of generating series. 
They were extended in the late eighties to the case of maps with higher genus by Bender and Canfield, who first obtained the asymptotic number of maps on any orientable surface of genus $g$ \cite{BenCan88} and then obtained in~\cite{BenCan91} in 1991 the following stronger result:

\begin{thm}[Bender and Canfield \cite{BenCan91}]\label{thm:BenCan91}
For any $g\geq 0$, the generating series $M_g(z)$ of maps of  genus $g$ enumerated by edges is a rational function of $z$ and $\sqrt{1-12z}$. 
\end{thm}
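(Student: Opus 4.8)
The plan is to obtain \cref{thm:BenCan91} as a byproduct of a Schaeffer-type blossoming bijection in arbitrary genus, via the chain: arbitrary maps $\rightsquigarrow$ a family of bicolorable maps $\rightsquigarrow$ blossoming unicellular maps of the same genus $\rightsquigarrow$ a rational function of the generating series of blossoming plane trees, the latter being itself a rational function of $z$ and $\sqrt{1-12z}$. The first link is classical: Tutte's angular (quadrangulation) construction, together with duality, gives for each $g$ a weight-preserving bijection turning rooted maps of genus $g$ counted by edges into rooted bicolorable maps of genus $g$ counted by a natural face or vertex statistic, so it suffices to prove the corresponding rationality for such bicolorable maps.

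The core of the argument is then to construct, in every genus, the analogue of Schaeffer's blossoming bijection. One equips each bicolorable map of genus $g$ with a \emph{canonical orientation}: a distinguished orientation of the edges generalizing the Eulerian (or accessibility-type) orientations used in the planar case, singled out by a minimality/positivity property that forces uniqueness. Running Schaeffer's local opening algorithm edge by edge along this orientation produces a \emph{blossoming unicellular map} of genus $g$ — a one-face map carrying dangling half-edges (buds and leaves) subject to a balance/closure condition — and the transformation preserves the relevant size statistic; the inverse is the matching closure operation. Proving that the opening terminates, that closure inverts it, and above all that the canonical orientation exists and is unique on surfaces of every genus is where essentially all the difficulty lies, and this is the step I expect to be the main obstacle.

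Granting the bijection, it remains to show that the generating series of blossoming unicellular maps of genus $g$ is rational in $z$ and $\sqrt{1-12z}$. Here one uses the structure theory of one-face maps: every unicellular map of genus $g$ has a well-defined \emph{scheme}, obtained by pruning the hanging subtrees and then suppressing the resulting degree-$2$ vertices; this scheme is a one-face map of genus $g$ with minimum degree $3$, and an Euler-characteristic count (a genus-$g$ unicellular map has $E = V + 2g - 1$, so minimum degree $3$ forces $V \le 4g-2$) shows that there are only finitely many schemes per genus. Conversely, a genus-$g$ (blossoming) unicellular map is rebuilt from its scheme by substituting, along each scheme edge, a sequence of edges each flanked by grafted blossoming trees, and by grafting blossoming trees in the scheme corners; a sequence contributes a geometric-series factor and a corner a single tree factor, so each scheme contributes a rational function of $T(z)$, the generating series of rooted blossoming plane trees, and the finite sum over schemes stays rational in $T(z)$. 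Finally $T(z)$ is the generating series of the genus-$0$ objects of the previous step, hence (by the first link) equals the generating series $M_0(z)$ of planar maps by edges, whose classical closed form
\[
M_0(z) \;=\; \frac{(1-12z)^{3/2} - 1 + 18z}{54\,z^{2}}
\]
is visibly a rational function of $z$ and $\sqrt{1-12z}$. Composing the three links proves \cref{thm:BenCan91}.

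Besides the construction of the canonical orientation, the one point in the last step that needs care is the interplay between the \emph{global} balance/closure condition of the blossoming decoration and the scheme decomposition: one must check that its ``charge defects'' can be confined to a bounded neighbourhood of the (finite) scheme, so that the buds and leaves living on the unbounded sequence-parts are genuinely free and the substitution is honestly a rational function of $T(z)$ rather than a diagonal of one. Keeping the root and the edge/face weight consistent through Tutte's bijection, the opening algorithm, and the scheme decomposition is routine bookkeeping.
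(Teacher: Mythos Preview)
Your high-level architecture matches the paper's, and you correctly flag the existence and uniqueness of the canonical orientation as a genuine difficulty. However, you have underestimated the \emph{second} hard step, and your proposed resolution of it would not work.

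The blossoming unicellular maps produced by the opening are not merely ``balanced'': they are \emph{well-labeled}, i.e.\ they carry an integer height function on corners that changes by $\pm 1$ across every edge and stem (this is what encodes bicolorability and the dual-geodesic orientation). After pruning the hanging trees, the branches joining scheme vertices are not free sequences but weighted Motzkin paths whose \emph{endpoint heights must match the (unbounded) labels at the scheme vertices}. So the contribution of a fixed scheme is not a product of independent geometric series in $T$; it is a sum, over all integer height assignments to the scheme vertices, of products of $B\cdot D^{|h_i-h_j|}$, where $D$ is a Motzkin-type series satisfying $D=z(1+4D+D^2)$ in the pruned variable. This sum is manifestly rational in $D$, but $D$ is quadratic over $T$, so a priori you only get rationality in a degree-$4$ extension of the original $z$ --- exactly the obstruction that blocked the Chapuy--Marcus--Schaeffer approach from reaching \cref{thm:BenCan91}. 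Your ``charge defects confined to a bounded neighbourhood of the scheme'' picture is therefore false: the labeling constraint is genuinely long-range.

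What the paper actually does (its Sections~4--5) is prove that the per-scheme series is not only rational in $D$ but \emph{symmetric} under $D\leftrightarrow D^{-1}$, which is equivalent to being rational in the pruned variable $z$ and hence in $T$. This symmetry is far from obvious: it requires introducing an auxiliary \emph{offset graph} on the scheme, proving it is acyclic, summing over surjections (height orderings of scheme vertices), and establishing a nontrivial signed identity over refinements of surjections. That argument, not the orientation or the opening, is the technical core of the rationality proof. A minor side remark: the tree series $T$ (satisfying $T=z+3T^2$) is not $M_0(z)$; it is $(1-\sqrt{1-12z})/6$, which is still rational in $z$ and $\sqrt{1-12z}$, so your conclusion survives, but the identification ``$T=M_0$'' is incorrect.
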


The enumerative results obtained using Tutte's techniques show some underlying very strong structural properties of maps, and call for bijective explanations. 
The first such explanation was the bijection of Cori and Vauquelin \cite{CorVau81}. 
Indeed, the enumerative formula of planar maps obtained by Tutte has a very simple closed form, that Cori and Vauquelin were the first to explain bijectively in 1981. 
This work was soon followed by many others, starting with the pioneering work of Schaeffer, in the late 90's, and was the beginning of the bijective combinatorics of maps.

In this vein, the purpose of this paper is to give a bijective explanation of enumerative results in higher genus. 
In particular, our main result is the first bijective proof of \cref{thm:BenCan91}, for $g\geq 2$. 

In the planar case, Schaeffer exhibits in~\cite{Scha97} a constructive bijection between Eulerian planar maps and some so-called \emph{blossoming trees}. 
The blossoming tree associated to a map is one of its spanning trees, decorated by some \emph{stems}, that enable to reconstruct the ``missing edges''. 
Our work is a generalization of \cite{Scha97} to maps of any genus.

In genus $g>0$, the natural counterpart of trees are unicellular maps (\ie maps with only one face) and we obtain in this work the following result (the terminology is introduced in \cref{subsec:closure}):

\begin{thm}\label{thm:mainGen}
There exists a constructive weight-preserving and genus-preserving bijection between rooted bicolorable maps and well-rooted well-labeled well-oriented unicellular blossoming maps.
\end{thm}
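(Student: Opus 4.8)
The plan is to follow Schaeffer's blueprint, but with two substantial new ingredients needed to make it work in arbitrary genus: (1) the construction of a *canonical orientation* on bicolorable maps, and (2) a careful analysis of what the ``local opening'' algorithm does when the ambient surface is no longer a sphere. First I would fix a rooted bicolorable map $M$ of genus $g$, together with a proper $2$-coloring of its faces. The $2$-colorability gives every edge a natural orientation (say, keep the white face on the left), but this orientation is far from unique at the level of the \emph{map} — what I really need is an orientation that is canonical as a combinatorial invariant, i.e.\ that does not depend on any arbitrary choice. I would therefore introduce a suitable notion of orientation on maps (with prescribed outdegrees / a ``flow'' condition coming from bicolorability) and prove, by a minimality argument over the poset of admissible orientations ordered by ``reachability via counterclockwise cycle reversals'', that there is a \emph{unique} minimal one. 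This is the analogue of the fact, used implicitly in the planar case, that Eulerian orientations admit a canonical representative; the genus-$g$ version requires showing the reversal relation is confluent, which is where the homology of $\mathbb{S}_g$ enters.

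Next, with the canonical orientation in hand, I would run the opening algorithm: traverse the map and, at each vertex, when the contour first arrives along an in-edge followed in rotation by an out-edge, cut that out-edge, replacing it by two \emph{stems} (a bud and a leaf, in blossoming-tree terminology). The key local claim — identical in spirit to Schaeffer's — is that each such cut strictly decreases the number of edges while keeping the surface connected and not decreasing the number of faces; hence after finitely many cuts we reach a map with a single cut edge left, i.e.\ a \emph{unicellular} blossoming map of the same genus $g$. Here I must verify that the canonical orientation guarantees the algorithm actually \emph{can} always proceed (there is always an admissible cut until we are unicellular) and that the genus is preserved — the latter because each elementary cut is a ``vertex split along a non-separating-within-a-face'' move, which by Euler's formula changes $(V,E,F)$ by $(+1,0,+1)$ wait—more precisely by $(+0,-1,+1)$ at the level of opening an edge into two stems, so $2-2g = V-E+F$ is unchanged. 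I would also track the \emph{weight} (the generating-variable contribution, edges) and the induced \emph{labels} on the resulting unicellular map, and record which rooting data survives, to land exactly in the class ``well-rooted well-labeled well-oriented unicellular blossoming maps.''

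The reverse map — the \emph{closure} — takes a unicellular blossoming map and greedily matches each leaf with the next bud encountered in the contour order of the unique face, re-gluing an edge for each matched pair; one shows this terminates with all stems matched and produces a bicolorable map, that the ``well-oriented'' condition on the input is exactly what is needed for the reconstructed orientation to be the canonical one, and that closure followed by opening is the identity (and vice versa). The genus- and weight-preservation of closure is immediate by running the Euler-characteristic bookkeeping in reverse. The main obstacle, I expect, is step (1): proving existence and uniqueness of the canonical orientation in genus $g$, and in particular establishing the confluence/lattice property of the cycle-reversal relation on a higher-genus surface, where homologically nontrivial cycles behave quite differently from contractible ones and the classical planar arguments (which rely on the Jordan curve theorem and on every cycle bounding a disk) break down. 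A secondary but still delicate point is showing that the opening algorithm never gets ``stuck'' before reaching unicellularity — i.e.\ that as long as there is more than one face, the canonical orientation exhibits a local configuration at which a cut is legal — and that this cut does not create a configuration from which the canonical-orientation property is lost, so that the opened object genuinely satisfies the ``well-oriented'' predicate defining the target class.
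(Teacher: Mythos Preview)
Your overall architecture (canonical orientation $\to$ opening $\to$ closure) matches the paper, and your identification of the main obstacle --- proving that the opened map is unicellular \emph{of the same genus} $g$ --- is exactly right. But your proposed resolution of that obstacle has a genuine gap, and the paper's actual mechanism is quite different from what you sketch.

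First, on the canonical orientation: the paper does not build it abstractly as a lattice minimum. It defines it \emph{explicitly}: pass to the dual (which is bipartite), label dual vertices by graph distance to the root, orient every dual edge toward the smaller label, and dualize back. This ``dual-geodesic orientation'' is then \emph{characterized} (via Propp's theorem on the face-flip lattice) as the unique bicolorable orientation with no clockwise face --- a characterization used later to recognize it after closure. So the confluence/lattice argument you worry about is not needed for existence; Propp's result is invoked only for uniqueness of the no-clockwise-face representative.

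Second, and more importantly, your plan to prove unicellularity and genus preservation by tracking Euler characteristic through a sequence of local cuts is not how the paper proceeds, and as written it does not work: your bookkeeping ``$(+0,-1,+1)$'' is already inconsistent (cutting an edge merges two faces or not depending on whether they were distinct, so $F$ can go up or down), and in any case Euler's formula alone cannot rule out that the opened submap has \emph{lower} genus than the surface --- precisely the failure mode in higher genus (cf.\ the Bernardi--Chapuy setting, where left-connected orientations can open to submaps of any genus $\le g$). The paper's key move is a duality lemma: the opening algorithm, viewed as a walk on the corner map, satisfies
\[
\unbud{\cut{m}}=\complementMap{\left(\unbud{\cut{\switch{\dual{m}}}}\right)},
\]
i.e.\ opening $m$ and opening the reflected dual $\switch{\dual{m}}$ produce complementary subgraphs. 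Since $\dual{m}$ is bipartite with its geodesic orientation, its opening is just the rightmost BFS spanning \emph{tree}; the complement of a spanning tree in the dual is exactly a unicellular spanning submap of maximal genus in the primal. This one-line reduction replaces your entire ``never gets stuck / genus preserved'' analysis. For the inverse direction, the paper shows the closure has no clockwise face (each non-root face has a distinguished counterclockwise edge coming from the last-merged stem pair), hence by the Propp characterization it carries the dual-geodesic orientation, closing the loop.
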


Thanks to this theorem, the enumeration of maps boils down to the much easier enumeration of this specific family of unicellular blossoming maps.
Using techniques used in particular by Chapuy, Marcus, and Schaeffer in \cite{ChMaSc09}, we are able to decompose these unicellular maps into a \emph{scheme} with \emph{branches}. Similarly to \cite{ChMaSc09}, the proof of \cref{thm:BenCan91} then amounts to showing a certain symmetry, that we are able to prove. 

Let us now put our work in context of the existing literature. In the planar case, there are numerous bijections between maps and some families of decorated trees. Two main trends emerge in these bijections. Either the decorated trees are some blossoming trees as already described (\eg \cite{CorVau81,Scha97,BoDiGu02,PouSch06}) or the trees are decorated by some integers that capture some metric properties of the maps (\eg \cite{Scha98,BoDiGu04}).
Bijections of the latter type have been successfully extended to higher genus~\cite{ChMaSc09,Chap10,Mier09}, and to non-orientable surface (\cite{ChaDol17,Bett16}). With these techniques (in particular, see \cite{ChMaSc09}), it is possible to show that the generating series of maps can be expressed as a rational function of some auxiliary functions, whose degree of algebraicity, unfortunately, is higher than the known enumerative results. 
The situation is much different in the case of bijections with blossoming trees, and apart from the recent work~\cite{DeGoLe17} which presents a bijection between simple triangulations of genus 1 (with some additional constraints) and a family of blossoming unicellular maps, there was, previously to our work, which is a generalization of \cite{Scha97}, no other extension of the existing planar bijections. 

Let us continue with an important connection to our work. 
As emphasized by Bernardi~\cite{Bern06} in the planar case and generalized by Bernardi and Chapuy~\cite{BerCha11}, a map endowed with a spanning unicellular embedded graph (whose genus can be smaller than the genus of the initial surface) can also be viewed as a map endowed with an orientation of its edges with specific properties. 
The general theory of $\alpha$-orientations developed by Felsner in the planar case~\cite{Fels04} has been successfully combined with the result of~\cite{Bern06} to give general bijective schemes in the planar case~\cite{BerFus12a,BerFus12b,AlbPou15}, which enables to recover the previously known bijections. 
It would be highly desirable to obtain systematic bijective schemes in higher genus by combining Bernardi and Chapuy's result together with the theory of $c$-orientations introduced by Propp~\cite{Prop93} or its extension by Felsner and Knauer~\cite{FelKna09}.
The main difficulty to tackle is to characterize the orientations that produce spanning unicellular embedded graph whose genus matches the genus of the original surface. The orientation we choose in our work does produce such embedded graphs, and our work can hence be seen as an important step in that direction.

The bijection of Schaeffer \cite{Scha97} was extended by Bouttier, di Francesco and Guitter \cite{BoDiGu02}, so as to work on any map, instead of only Eulerian ones. This work was then revisited by Albenque and Poulhalon \cite{AlbPou15}, whose general framework allows to see the bijection of \cite{BoDiGu02} as the opening of a map, endowed with a well-chosen fractional orientation. 
In \cref{sec:nonBic}, we generalize these extensions to maps on surfaces of any genus, so as to get a direct bijection between general maps and some unicellular blossoming maps.

Our main result deals with the rationality of bicolorable $4$-valent maps. 
It would be very interesting to have similar rationality results for general bicolorable maps, with a control on the degrees of the vertices, for instance by giving a rational parametrization of bicolorable maps of degree at most $6$. 
This direction will be explored in a future work.

In \cite{BeCaRi93}, Bender, Canfield and Richmont generalized the work of \cite{BenCan91}, by enumerating maps by vertices and faces instead of edges only. 
Similarly, they prove the existence of a rational parametrization of the generating series in higher genus.
It would be really interesting to generalize our work so as to obtain a combinatorial explanation of this more general rational parametrization. 
This is another direction that we wish to explore in future work.

To conclude, note that there are a lot of other more precise structural or enumerative properties of maps and related objects that can be proved using involved mathematical studies and calculation \cite{ChaDol17,Eyna16}, but still call for bijective or combinatorial explanation. From this perspective, the present work is still at the very beginning of the understanding of maps we hope to reach through combinatorics.

{\bf Organization of the paper: }In \cref{sec:or}, we recall definitions about maps and orientations and state Propp's theorem adapted to our setting. In \cref{sec:bijection}, we define an explicit and constructive bijection between ($4$-valent or not) bicolorable maps and a family of unicellular maps. In \cref{sec:analysis}, we analyse this family by reducing these maps to schemes, reducing the proof of \cref{thm:BenCan91} to the rationality of a restricted family of maps, that all have the same \emph{scheme}, or alternatively, the symmetry of this series in terms of an intermediate series. In \cref{sec:symmetry} we prove this symmetry by doing some additional work based on surjections surjections. Finally in \cref{sec:nonBic} we present an extension of our main bijection to general maps (not necessarily bicolorable), using fractional orientations.

{\bf Notation:} In this article, combinatorial families are named with calligraphic letters, their generating series is the corresponding capital letter, and an object of the family, is usually denoted by the corresponding lower case letter. The size being denoted by $|\centerdot|$, we therefore have for a combinatorial family $\mathcal{S}$: $S(z)=\sum_{s\in\mathcal{S}}z^{|s|}$. \\[-.6cm]

\section{Orientations in higher genus}
\label{sec:or}
\subsection{General}
\label{subs:def}

We begin with some definitions about maps. 
\begin{defn}[embedded graph, map]
An \emph{embedded graph} is an embedding of a connected graph into a given compact surface, taken up to orientation-preserving homeomorphisms	 of the surface. 
An embedded graph is \emph{cellularly embedded} if all its \emph{faces} (connected component of the complement) are homeomorphic to discs. 
A \emph{map} is a cellularly embedded graph. 
\end{defn}
The set of maps, counted by number of edges, is denoted~$\mathcal{M}$.
In this paper we only consider maps embedded on orientable surfaces.
\emph{General maps} have no other restriction, and in particular, can have loops or multiple edges.

A map on an orientable surface can alternatively be defined as a graph equipped with a cyclic order on edges around each vertex.

\begin{defn}[genus]
The \emph{genus} of a map is the genus of its underlying surface.
The genus of an embedded graph is the genus of the map obtained by replacing each face of the embedded graph by a disc. 
\end{defn}
The genus of an embedded graph is lower or equal to the genus of its underlying surface. The two are equal if and only if the embedded graph is a map.
All families of maps can be refined by their genus; we denote this refinement by an index indicating the topological genus, so that for instance $\mathcal{M}_0$ is the set of maps in the sphere. See \cref{fig:mapGenus1} for an example of map of genus $1$.

\begin{defn}[corner, degree]
An adjacency between a face and a vertex is called a \emph{corner}. 
Note that a single pair vertex-face can give rise to several distinct corners.
The \emph{degree} of a face (resp.~vertex) is the number of adjacent corners. 
\end{defn}
\begin{defn}[rooting]
In what follows, to get rid of automorphisms, we always assume that the maps we consider are \emph{rooted}, meaning that a corner, called the\emph{root corner}, that we indicate by a brown arrow, is distinguished.
The vertex and face adjacent to the root corner are called \emph{root vertex} and \emph{root face}.
\end{defn}

\begin{figure}
  \centering
  \begin{subfigure}[b]{.45\textwidth}
   \centering\includegraphics[width=\textwidth,page=8]{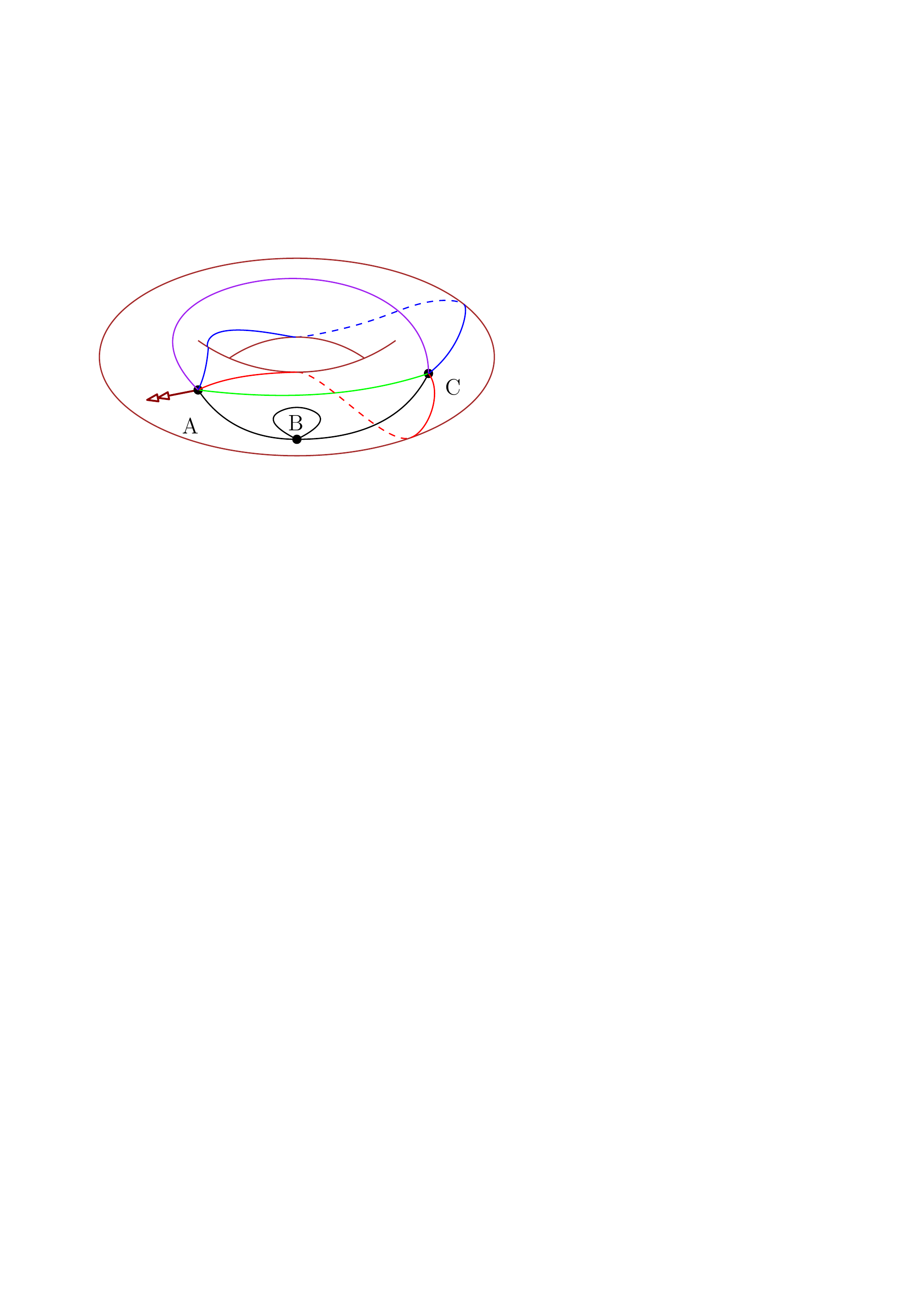}
   \vspace*{-42pt}
	  \caption{\label{fig:mapGenus1}A rooted map of genus~1.}
  \end{subfigure}
  \quad
  \begin{subfigure}[b]{.45\textwidth}
   \centering\includegraphics[width=\textwidth]{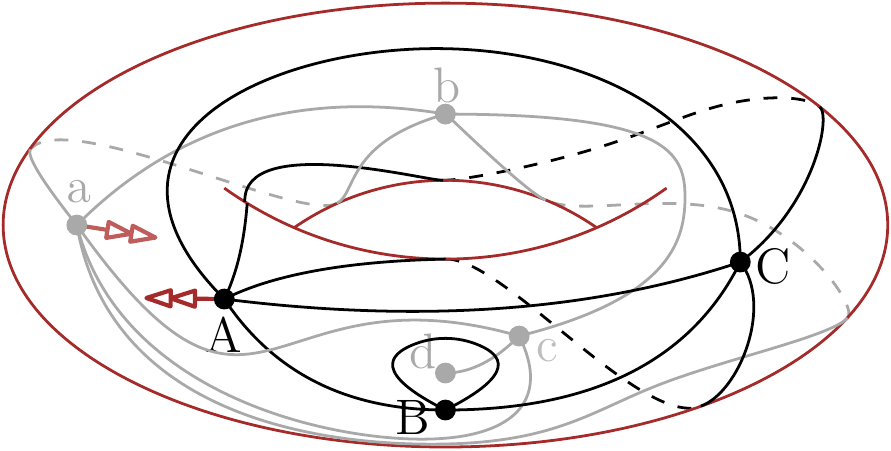}
   \caption{\label{fig:dualMap}The same map (grey) and its dual (black).}
  \end{subfigure}
  \caption{Examples of maps. The root corner is indicated by a double-arrow.}
  \label{fig:mapEx}
\end{figure}

The set of vertices (resp.~edges, faces) of $m$ is denoted $\mathcal{V}(m)$ (resp.~$\mathcal{E}(m)$, $\mathcal{F}(m)$). 
The number of vertices (resp.~edges, faces) of $m$ is denoted $v(m)$ (resp.~$e(m)$, $f(m)$).
These notation can also be specified by degree, so that for instance $f_k(m)$ is the number of degree-$k$-faces of $m$.
The genus of $m$ is denoted $g(m)$. 
We recall Euler's formula: 

\begin{prop}[Euler's formula]
For any map $m\in{M}$, $v(m)-e(m)+f(m)=2-2g(m)$.
\end{prop}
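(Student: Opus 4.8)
The statement is the classical Euler relation, and the natural plan is to read off the alternating sum $v(m)-e(m)+f(m)$ as a topological invariant of the surface. By definition of a map, $m$ is a cellular embedding of its underlying graph into $\mathbb{S}_g$, so its vertices, edges and faces are exactly the $0$-, $1$- and $2$-cells of a CW-structure on $\mathbb{S}_g$ --- the faces being $2$-cells precisely because a cellular embedding demands that they be discs. Since the Euler characteristic $\sum_i(-1)^i c_i$ of a CW-complex depends only on the homotopy type of the underlying space, we get $v(m)-e(m)+f(m)=\chi(\mathbb{S}_g)$, and $\chi(\mathbb{S}_g)=2-2g$ is standard (for instance from the $4g$-gon presentation of $\mathbb{S}_g$, or by induction on $g$ through handle attachment).

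If one prefers a self-contained combinatorial argument that stays within the rotation-system encoding of maps recalled above, one can instead induct on $e(m)$. When $m$ is a tree (in particular when $e(m)=0$) it can only be cellularly embedded on the sphere, where $f(m)=1$ and $v(m)-e(m)+1=2$. Otherwise $m$ has an edge $\varepsilon$ whose deletion keeps the graph connected; removing $\varepsilon$ produces a smaller map $m'$ (read off its rotation system), to which the induction hypothesis applies, and exactly one of two things happens: either the two sides of $\varepsilon$ belonged to distinct faces, which then merge, so $f$ decreases by $1$ and the genus is unchanged; or the two sides belonged to the same face, in which case the genus drops by $1$ and $f$ increases by $1$. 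In both cases, combining $v(m')-e(m')+f(m')=2-2g(m')$ with $v(m')=v(m)$ and $e(m')=e(m)-1$ yields the relation for $m$.

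The only point requiring a little care in this second approach --- and the only real obstacle --- is the dichotomy for edge deletion: that deleting a non-bridge edge changes the number of faces by exactly $\pm 1$, and that the $+1$ case is precisely the one in which the genus decreases. This is routine ribbon-graph bookkeeping (and is in fact forced, once one knows the parity of the Euler characteristic), so in practice I would simply invoke the topological computation above, which makes the proof immediate.
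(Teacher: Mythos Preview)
The paper does not actually supply a proof of this proposition: it merely states Euler's formula as a recalled classical fact, without argument. So there is nothing to compare your attempt against in terms of ``the paper's approach''.

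That said, your first argument is correct and is the standard one: a map is by definition a cellular embedding, so its vertices, edges and faces furnish a CW-decomposition of $\mathbb{S}_g$, and the alternating sum of cell counts is the Euler characteristic $\chi(\mathbb{S}_g)=2-2g$. Your second, combinatorial argument is also essentially fine in the orientable setting (which is the only one the paper considers); the dichotomy you describe for deleting a non-bridge edge --- either two distinct incident faces merge, or a single incident face splits with the genus dropping by one --- is exactly the standard ribbon-graph case analysis, and you correctly flag that the second branch is the only place where some care is needed. Since you ultimately fall back on the topological argument anyway, the proof is complete.
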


\begin{defn}[dual map]
Since an edge connects two vertices and separates two faces, we can define the \emph{dual map} $\dual{m}$ of $m$ by exchanging the role of vertices and faces, and swapping the connection and separation induced by each edge (see \cref{fig:dualMap}). 
The root corner remains the same (but its vertex and its face are exchanged). 
\end{defn}
Note that duality is involutive:~$\dual{(\dual{m})}=m$.

\begin{defn}[unicellular, tree]
An embedded graph is called \emph{unicellular} if it has only one face. 
A \emph{tree} is an embedded graph with no cycle. 
\end{defn}
Trees are unicellular and have genus $0$.
Note that in genus $0$, any unicellular embedded graphs is a tree, whereas in a positive-genus surface, a unicellular embedded graph may have any genus lower than the genus of the surface.

\begin{defn}[bipartite map, bicolorable map]
A map is \emph{bipartite} if its underlying graph is bipartite, which means that its vertices can be \emph{properly} (so that no $2$ adjacent vertices have same color) colored black and white.
Dually, a map is \emph{bicolorable} if its faces can be \emph{properly} (so that no $2$ adjacent faces have same color) colored black and white.
\end{defn} 
Note that in particular, a bipartite map has no loop.
Note also that the faces of a bipartite map and the vertices of a bicolorable map necessarily have even degree. 
The set of bipartite (resp. bicolorable) maps counted by number of faces (resp. vertices) is denoted $\mathcal{BP}$ (resp. $\mathcal{BC}$). 
The generating series of bipartite and bicolorable maps, $BP(z)$ and $BC(z)$, can be refined in the following way: \[BP(\textbf{z})=BP(z_1,z_2,\cdots)=\sum\limits_{m\in\mathcal{BP}}\prod\limits_{k=1}^{\infty}z_k^{f_{2k}(m)}
\stackrel{\text{\tiny duality}}{=}
\sum\limits_{m\in\mathcal{BC}}\prod\limits_{k=1}^{\infty}z_k^{v_{2k}(m)}=BC(z_1,z_2,\cdots)=BC(\textbf{z}).\]

\begin{rem}\label{rem:eulMap}
A map is called \emph{Eulerian} if all its vertices have even degree. 
Note that bicolorable maps are Eulerian, and in fact the notions are equivalent in genus $0$.
However this is not the case in higher genus, where some additional non-local constraints appear along non-contractible cycles.
Though Eulerian is a more common property for graph, it seems that, in our setup, bicolorability is a more relevant map property, in particular in view of \cref{prop:radMap}.
\end{rem}

\begin{defn}[quadrangulation, $4$-valent map]
A map is called a \emph{quadrangulation} if all its faces have degree $4$. 
Dually, a map is \emph{$4$-valent} if all its vertices are of degree $4$. 
\end{defn}
The set of bipartite quadrangulations (resp.~bicolorable $4$-valent maps), counted by number of faces (resp. number of vertices), is denoted~$\mathcal{BP}^\square$~(resp.~$\mathcal{BC}^\times$).
Their generating series therefore satisfy: $BP^\square(z)=BP(0,z,0,0,\cdots)\stackrel{duality}{=}BC(0,z,0,0,\cdots)=BC^\times(z)$.

\begin{prop}
\label{prop:radMap}
 General maps of genus $g$ with $n$ edges are in bijection with $4$-valent bicolorable maps of genus $g$ with $n$ vertices, or dually, with bipartite quadrangulations of genus $g$ with $n$ faces. Therefore, $M_g(z)=BC_g^\times(z)=BP_g^\square(z)$.
\end{prop}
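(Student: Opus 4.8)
The plan is to exhibit the classical \emph{radial map} (or \emph{medial map}) construction and its inverse, and to check that it preserves genus and transports the edge-count of a general map to the vertex-count of a $4$-valent bicolorable map. Given a rooted map $m$ of genus $g$ with $n$ edges, I would build $\rho(m)$ as follows: place one new vertex in the middle of each edge of $m$, and for each corner of $m$ (an adjacency between a vertex $v$ and a face $f$) draw an edge inside the face $f$ joining the two new vertices sitting on the two edges bounding that corner at $v$. The new vertices have degree $4$ (each edge of $m$ has two sides and two endpoints, contributing four corners around its midpoint), so $\rho(m)$ is $4$-valent, and it is naturally bicolorable: color a face of $\rho(m)$ black if it contains a vertex of $m$ and white if it contains a face of $m$, which is a proper $2$-coloring since every edge of $\rho(m)$ separates a "vertex-face" from a "face-face". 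The root corner of $m$ singles out a canonical corner of $\rho(m)$, so the construction is well defined on rooted maps.

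Next I would describe the inverse. Starting from a rooted $4$-valent bicolorable map $m'$ of genus $g$ with $n$ vertices, fix the proper $2$-coloring of its faces (here rooting is used to break the black/white symmetry, consistently with the convention of \cref{prop:radMap}); then contract each black face to a single point. Since $m'$ is $4$-valent, each vertex of $m'$ has exactly two incident black faces and two incident white faces alternating around it, so after contracting black faces every vertex is replaced by an edge connecting the two points coming from its two black faces, yielding a map whose edges correspond bijectively to the vertices of $m'$ and whose vertices correspond to the black faces of $m'$; its faces correspond to the white faces of $m'$. One checks that this map is exactly the map $m$ with $\rho(m)=m'$, and that the two operations are mutually inverse, essentially because each is a local replacement rule and the bookkeeping of corners matches up. The dual statement (bipartite quadrangulations) follows by applying duality, which exchanges "$4$-valent bicolorable" with "quadrangulation with a proper face $2$-coloring", the latter being automatically bipartite.

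Finally I would verify genus preservation and the equality of generating series. Genus preservation is immediate from the fact that $\rho(m)$ is embedded on the same surface as $m$ (the midpoint vertices and the corner-edges are drawn inside the faces and vertex-neighborhoods of $m$, so no handles are created or destroyed); one can also confirm it via Euler's formula, since $\rho(m)$ has $n$ vertices, $2n$ edges (two corner-edges per edge of $m$, as $\sum_f \deg(f) = \sum_v \deg(v) = 2n$ counts corners), and $v(m)+f(m)$ faces, giving $n - 2n + (v(m)+f(m)) = v(m)-e(m)+f(m) = 2-2g$. Since the correspondence is size-preserving (edges of $m$ $\leftrightarrow$ vertices of $\rho(m)$) and genus-preserving, it refines to a bijection $\mathcal{M}_g \to \mathcal{BC}^\times_g$ for each $g$, hence $M_g(z) = BC^\times_g(z)$, and $BC^\times_g(z) = BP^\square_g(z)$ by duality.

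The main obstacle is not the idea, which is standard, but the careful treatment of degenerate features: general maps may have loops and multiple edges, so one must check that the midpoint/corner construction still produces a well-defined cellularly embedded $4$-valent map (e.g.\ a loop at $v$ contributes two corners at $v$, and the corner-edges may form multiple edges or even a loop in $\rho(m)$, which is allowed for general $4$-valent maps), and that the inverse contraction of black faces never disconnects the surface or merges things incorrectly. Making the "mutually inverse" claim fully rigorous really amounts to setting up both constructions on the level of corners (or darts/flags) and exhibiting the explicit bijection between the corner sets, which is where the one genuinely fiddly verification lies.
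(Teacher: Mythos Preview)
Your proposal is correct and follows exactly the same radial-map construction as the paper, which also places a vertex on each edge and an edge across each corner; in fact you supply considerably more detail (the explicit inverse by contracting black faces, the Euler-formula check, and the discussion of loops and multiple edges) than the paper's own proof, which merely states the construction and asserts bijectivity.
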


\begin{proof}
Starting from a map $m$, we construct bijectively as follows a $4$-valent bicolorable map called the \emph{radial map} and denoted $r$. 
We create a vertex in $r$ for each edge of $m$. 
For each corner of $m$, we then add an edge in $r$ between the two vertices corresponding to the edges of $m$ adjacent to the chosen corner. 
Out of the two corners of $r$ corresponding to the root corner of $m$, we choose the leftmost one as the root corner of $r$. 
See \cref{fig:exRad} for an example (the orientation of the edges will be explained in \cref{subs:orientations}).
\end{proof}

Note that the radial map is the dual of the so-called \emph{quadrangulated map} (see \cref{fig:exQuad}).

\begin{figure}
  \centering
  \begin{subfigure}[c]{.46\linewidth}
   \centering
   \includegraphics[scale=1,page=7]{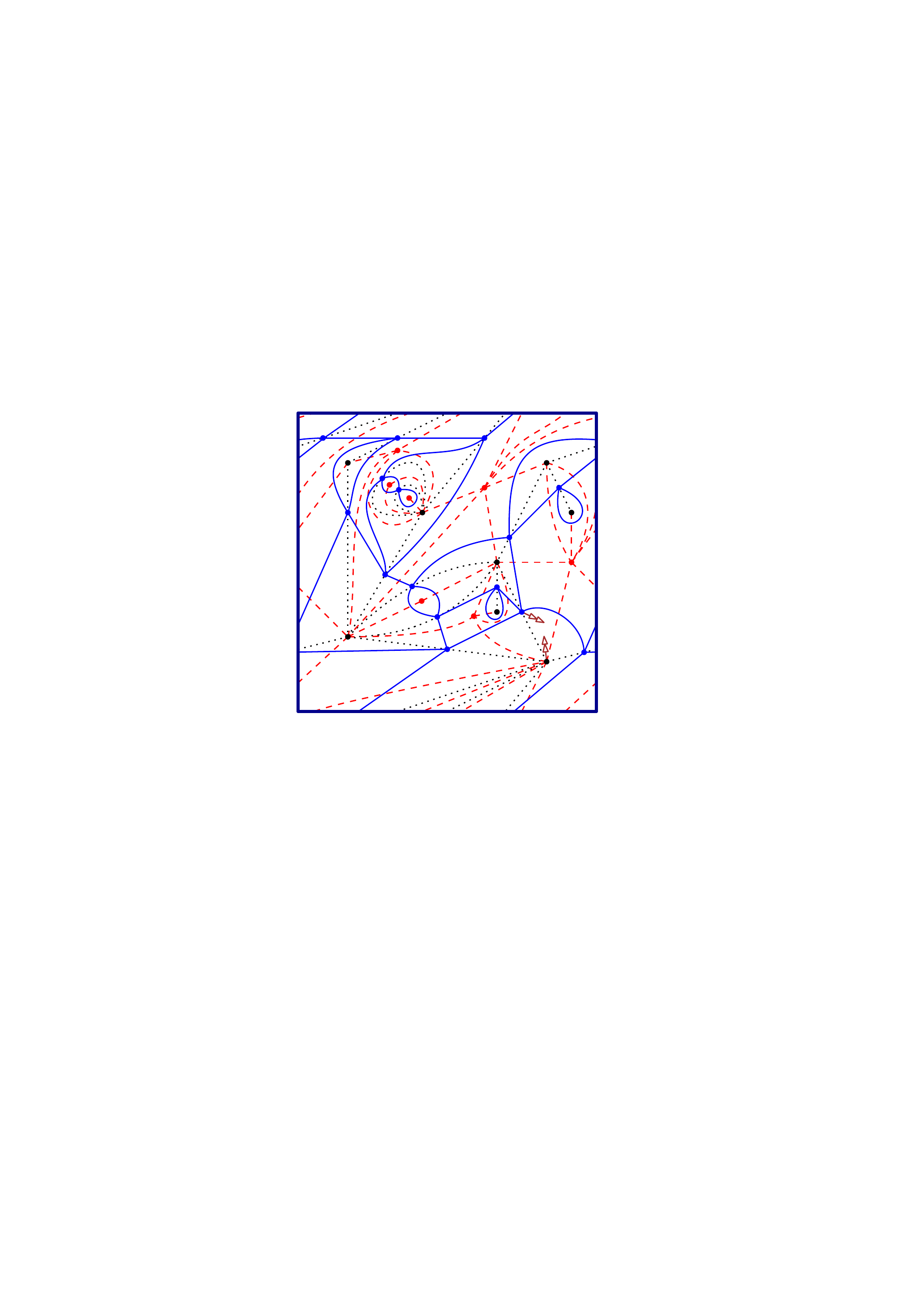}
   \caption{A map (dashed black) with its (bipartite quadrangulation) quadrangulated map (full red), with geodesic orientation.}\label{fig:exQuad}
  \end{subfigure} \hfill
  \begin{subfigure}[c]{.46\linewidth}
   \centering
   \includegraphics[scale=1,page=6]{longEx2RadMap.pdf}
   \caption{A map (dashed black) with its ($4$-valent bicolorable) radial map (full blue), with dual-geodesic orientation.}\label{fig:exRad}
  \end{subfigure}
  \caption{Classical constructions on a toroidal map.}
\end{figure}

\subsection{Structure of orientations of a graph}
\label{subs:orientations}

\begin{defn}[orientation, dual orientation]
An \emph{orientation} of a map is an orientation of each of its edges. 
The \emph{dual orientation} $\dual{o}$ of an orientation $o$ of a map $m$ is the orientation of $\dual{m}$ where all dual edges are oriented from the face to the right of the primal edge toward the face to its left.
\end{defn}
Note that applying duality twice reverses the orientation (duality on oriented maps is not involutive).

Orientations provide additional structural properties to maps, useful for algorithmic purposes. 
However, since our final purpose is to study maps without an orientation, it is convenient to assign a canonical orientation to maps. 
Such an orientation will be provided in \cref{cor:lat}, and will be obtained as the minimum of a lattice of orientations, as described below.

\begin{defn}[bipartite orientation, vertex-push]
An orientation of a map is called \emph{bipartite}, if it has as many forward edges as backward edges along any cycle (note that any cycle of a bipartite map is made of an even number of edges).
The set of bipartite orientations is endowed with the \emph{vertex-push} operation (see \cref{fig:exVertPush}), that changes a sink distinct from the root into a source, by reversing all adjacent edges. 
\end{defn}

A map that has a bipartite orientation is necessarily bipartite.

\begin{defn}[bicolorable orientation]
An orientation of a map is called \emph{bicolorable} if its dual orientation is bipartite.
\end{defn}
In other words, each dual cycle has as many edges crossing to the right as edges crossing to the left.
A map that has a bicolorable orientation is necessarily bicolorable.
\begin{rem}\label{rem:eulOr}
A bicolorable orientation is Eulerian, meaning that all vertices have equal indegree and outdegree. 
However, again, the $2$ notions are not equivalent on a surface of positive genus, but this is not only due to \cref{rem:eulMap}
Even if the map is bicolorable, an Eulerian orientation is not necessarily bicolorable, because of the existence of some non-contractible dual cycles inducing additional non-local constraints for bicolorability.
\end{rem}

\begin{defn}[face-flip]
The set of bicolorable orientations is endowed with the operation dual to vertex push, called the \emph{face-flip}.
\end{defn}
\begin{rem}
Face-flips can alternatively be defined in the following way (see \cref{fig:exFaceFlip}): take a clockwise face distinct from the root, and change the orientation of all edges adjacent to that face.
\end{rem}

\begin{figure}
  \centering
  \begin{subfigure}[c]{.46\linewidth}
   \centering
   \includegraphics[width=\linewidth,page=2]{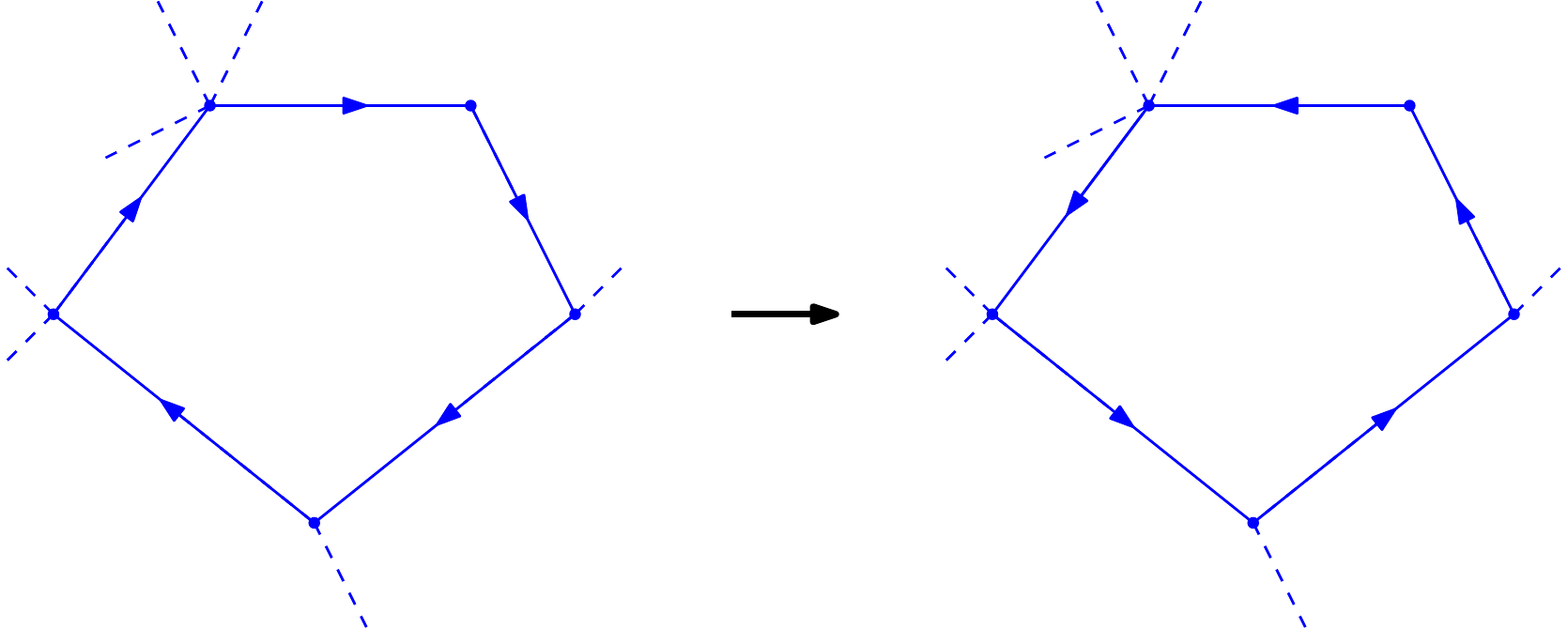}
  \label{fig:exVertPush}
   \caption{A vertex-push.}
  \end{subfigure} \hfill
  \begin{subfigure}[c]{.46\linewidth}
   \centering
   \includegraphics[width=\linewidth,page=1]{exFaceFlip}
   \caption{A face-flip}
  \label{fig:exFaceFlip}
  \end{subfigure}
  \caption{Some operations on orientations of maps.}
\end{figure}

Vertices of a bipartite map can be labeled by their distance to the root. Since the map is bipartite, two adjacent vertices cannot have the same label. 
\begin{defn}[geodesic orientation, dual-geodesic orientation]
The \emph{geodesic orientation} of a bipartite rooted map is the orientation whose edges are all oriented towards their extremity with smaller label.

The dual of the geodesic orientation is called the \emph{dual-geodesic orientation} (see \cref{fig:exRad} for an example).
\end{defn}

Along any cycle, forward (resp.~backward) edges in the geodesic orientation correspond to a label increasing (resp.~decreasing) by exactly $1$. Therefore, the geodesic orientation is bipartite, and the dual-geodesic orientation is bicolorable. 

The next result directly follows from \mbox{\cite[Theorem~1]{Prop93}}.

\begin{thm}[Propp]
The transitive closure of the vertex-push operation endows the set of  bipartite orientations of a fixed bipartite map with a structure of distributive lattice.
\end{thm}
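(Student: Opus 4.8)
The plan is to observe that the statement is a literal instance of the framework of \cite{Prop93} and to reduce it to \cite[Theorem~1]{Prop93} after a short dictionary check. Recall that Propp fixes a finite connected graph, a reference orientation $\omega$, and a distinguished vertex $r$, and considers the \emph{$c$-orientation class} of $\omega$: the set of all orientations that have, along every cycle, the same number of forward-minus-backward edges as $\omega$. His theorem asserts that this class, ordered by the transitive closure of the push operation at vertices other than $r$, is a distributive lattice. So the whole argument consists of exhibiting the set of bipartite orientations of a fixed bipartite map $m$ as such a class, and the vertex-push operation as Propp's push.

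First I would pin down the class and check it is non-empty. Fix the bipartite map $m$, let $r$ be its root vertex, and take $\omega$ to be the geodesic orientation of $m$, which exists since $m$ is bipartite and which, as noted just above, is a bipartite orientation: along every cycle it has as many forward as backward edges, i.e. its circulation is $0$ everywhere. An orientation $o$ of $m$ then lies in Propp's $c$-orientation class of $\omega$ exactly when it also has forward-minus-backward equal to $0$ along every cycle, which is precisely the definition of a bipartite orientation. Hence the set of bipartite orientations of $m$ is exactly the $c$-orientation class of $(m,\omega)$, and it is non-empty because it contains $\omega$ itself.

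Next I would match the two notions of ``push''. Propp's push at a vertex $v\neq r$ reverses all edges incident to $v$ when they are all outgoing or all incoming; restricting to the moves that send a sink $v\neq r$ to a source gives precisely one of the two directions of this operation, and its transitive closure is the partial order used in \cite{Prop93} (the other direction merely reverses the order, which does not matter since being a distributive lattice is self-dual). Each such move reverses the set of edges incident to a single vertex, hence a cocycle, so it changes no circulation and keeps us inside the class of bipartite orientations; conversely Propp shows every orientation of the class is reachable. Applying \cite[Theorem~1]{Prop93} to $(m,\omega,r)$ then gives exactly the claimed distributive lattice on the set of bipartite orientations of $m$.

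The one point that needs genuine (if light) care, and the closest thing to an obstacle, is that ``bipartite along every cycle'' is an \emph{absolute} (circulation-zero) condition rather than a relative one, so one must check that the $c$-orientation class does not depend on which bipartite orientation is chosen as reference; this is immediate once one knows that any two bipartite orientations differ by reversing a disjoint union of cocycles, equivalently that their symmetric difference lies in the cocycle space, which follows from both having zero circulation along every cycle. Beyond this bookkeeping and the verification that our root vertex plays the role of Propp's distinguished vertex, there is no real difficulty: all the substance lives in \cite[Theorem~1]{Prop93}, and our task is only to phrase bipartite maps with bipartite orientations in Propp's language.
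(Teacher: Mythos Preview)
Your proposal is correct and takes exactly the approach the paper intends: the paper gives no proof beyond the sentence ``The next result directly follows from \cite[Theorem~1]{Prop93}'', and what you have written is precisely the dictionary check needed to justify that sentence. Your identification of the set of bipartite orientations with the $c$-orientation class of the geodesic orientation (circulation zero on every cycle), of the root vertex with Propp's distinguished vertex, and of the vertex-push with Propp's push-down move is the right unpacking, and there is nothing to add.
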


In particular, this means that this set has a unique minimum for vertex-push. By definition, the only sink of the geodesic orientation is the root vertex, which means that the geodesic orientation is minimal for the vertex-push operation, and by consequence:

\begin{cor}
The minimum of the above-mentioned lattice of bipartite orientation of a map is the geodesic orientation of this map.
\end{cor}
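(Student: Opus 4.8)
The plan is to reconstruct the short argument sketched just above the statement, supplying the one point that is only asserted there (that the geodesic orientation has the root as its only sink). First I would invoke Propp's theorem: the set of bipartite orientations of the fixed bipartite map, with the order generated by vertex-push, is a distributive lattice, and since it is finite it has a unique minimum. In this lattice a vertex-push is a strictly decreasing move — this is exactly what ``minimum for vertex-push'' means — so the minimum is precisely the unique bipartite orientation to which no vertex-push can be applied; equivalently, by the definition of vertex-push, the unique orientation whose only sink is the root vertex. It therefore suffices to check that the geodesic orientation has this property, namely that no non-root vertex is a sink in it: for then, being itself a bipartite orientation (along any cycle the forward edges raise the distance-label by $1$, the backward ones lower it by $1$, and the total change around the cycle is $0$, so the two kinds are equinumerous) and hence an element of the lattice, it coincides with the minimum.

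For the check I would argue as follows. Write $\ell(v)$ for the distance from $v$ to the root. Two adjacent vertices $u,v$ satisfy $|\ell(u)-\ell(v)|=1$: it is at most $1$ by the triangle inequality, and it is not $0$ because the map is bipartite. Now let $v$ be any non-root vertex and let $u$ be the vertex preceding $v$ on a shortest path from the root to $v$; then $u$ is a neighbour of $v$ with $\ell(u)=\ell(v)-1<\ell(v)$, so the geodesic orientation orients the edge between $u$ and $v$ from $v$ towards $u$. Hence $v$ has an outgoing edge and is not a sink. So the root is the only sink of the geodesic orientation, no vertex-push applies to it, and by the previous paragraph it is the minimum of the lattice. (For completeness one checks that the root is itself a sink, all its neighbours having label $1$, but this plays no role.)

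I do not expect any genuine obstacle here; the statement is essentially a bookkeeping consequence of Propp's theorem. The only delicate points are the two facts used implicitly in the first paragraph: that in the lattice produced by Propp's theorem a vertex-push strictly decreases the orientation, so that the lattice minimum is characterised by the non-existence of an applicable vertex-push (equivalently, by having no non-root sink); and that a finite lattice has a unique minimal element, namely its bottom, so that exhibiting a single push-minimal orientation already identifies it with the minimum. Both are standard, the latter being purely order-theoretic.
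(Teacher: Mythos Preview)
Your proposal is correct and follows essentially the same argument as the paper, which is given in the sentence immediately preceding the corollary: Propp's lattice has a unique minimum, and the geodesic orientation admits no vertex-push because its only sink is the root. You simply supply the verification of this last fact (via a shortest path from the root), which the paper asserts ``by definition'' without further detail.
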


Furthermore, we obtain by duality:

\begin{cor}
The transitive closure of the face-flip operation endows the set of bicolorable orientations of a fixed bicolorable map with a structure of distributive lattice, and its minimum is the dual-geodesic orientation.
\end{cor}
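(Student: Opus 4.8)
The plan is to deduce this corollary from the preceding results by a straightforward duality argument, so the main work is bookkeeping about how duality interacts with the lattice structure. First I would recall the three ingredients already available: (i) Propp's theorem, stating that the transitive closure of vertex-push makes the set of bipartite orientations of a fixed bipartite map into a distributive lattice; (ii) the previous corollary, identifying the geodesic orientation as the minimum of that lattice; and (iii) the definitions of the dual map $\dual{m}$, the dual orientation $\dual{o}$, the face-flip as the operation dual to vertex-push, and a bicolorable orientation as one whose dual is bipartite.

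The key steps, in order, are as follows. Let $m$ be a fixed bicolorable map; then $\dual{m}$ is a bipartite map. The map $o\mapsto\dual{o}$ is a bijection between orientations of $m$ and orientations of $\dual{m}$ (up to the orientation-reversal subtlety noted after the definition of dual orientation, which is harmless here because we can also use the reverse-dual, an involution up to relabelling; I would phrase it as: duality gives a bijection between bicolorable orientations of $m$ and bipartite orientations of $\dual{m}$). Under this bijection, by definition a face-flip on an orientation of $m$ corresponds exactly to a vertex-push on the dual orientation of $\dual{m}$. Hence the poset generated by face-flips on bicolorable orientations of $m$ is isomorphic (or anti-isomorphic, depending on the chosen convention for the duality bijection) to the poset generated by vertex-push on bipartite orientations of $\dual{m}$. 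Applying Propp's theorem to $\dual{m}$, the latter is a distributive lattice; transporting along the isomorphism, the former is a distributive lattice as well. Finally, the dual-geodesic orientation of $m$ is by definition the dual of the geodesic orientation of $\dual{m}$, which by the previous corollary is the minimum of the vertex-push lattice on $\dual{m}$; transporting along the bijection, the dual-geodesic orientation of $m$ is the minimum of the face-flip lattice on $m$.

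The one point requiring genuine care — and the step I expect to be the main obstacle — is reconciling orientations and their duals cleanly, since as the text explicitly warns, "applying duality twice reverses the orientation", so $o\mapsto\dual{o}$ is not an involution. I would handle this by fixing once and for all the convention that identifies a face-flip on $m$ with a vertex-push on $\dual{m}$ via the stated duality map, checking directly from the definitions (a clockwise face of $m$ becomes a sink of $\dual{o}$ in $\dual{m}$, and reversing its adjacent edges is precisely the vertex-push) that this identification is a bijection of operations respecting the root exclusion (the root corner is preserved under duality, with vertex and face swapped, so "distinct from the root face" on the primal side matches "distinct from the root vertex" on the dual side). Once this correspondence of operations is pinned down, the transfer of "distributive lattice" and of "minimum = (dual-)geodesic orientation" is purely formal: any poset isomorphism or anti-isomorphism carries a distributive lattice to a distributive lattice, and carries a least element to a least (resp.\ greatest) element, and one checks the geodesic/dual-geodesic orientations match up under the bijection directly from their defining property (edges oriented toward the endpoint of smaller distance-label). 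No new combinatorial input beyond the already-cited \cite{Prop93} is needed.
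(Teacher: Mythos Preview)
Your proposal is correct and matches the paper's own approach: the paper simply writes ``Furthermore, we obtain by duality'' before stating this corollary, giving no further proof. You have spelled out the duality bookkeeping (including the non-involutivity caveat) more carefully than the paper does, but the underlying argument is identical.
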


We can therefore characterize uniquely the dual-geodesic orientation of a given bicolorable map:

\begin{cor}
\label{cor:lat}
The dual-geodesic orientation of a bicolorable map is the unique bicolorable orientation of this map with no clockwise face.
\end{cor}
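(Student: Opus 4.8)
The plan is to combine the previous corollary with the characterization of the minimum of a distributive lattice as its unique element from which no downward move is possible. By the corollary just above, the set of bicolorable orientations of a fixed bicolorable map, equipped with the transitive closure of the face-flip operation, forms a distributive lattice whose minimum is the dual-geodesic orientation. In a finite distributive lattice (indeed in any finite poset with a minimum), the minimum is the unique element with no strictly smaller element; translated into the language of the generating operation, the minimum is the unique orientation to which no face-flip can be applied — recall that a face-flip is only permitted at a clockwise face distinct from the root. Thus I would argue: an orientation is minimal for the face-flip operation if and only if no face-flip is applicable to it, which by the remark following the definition of face-flip means it has no clockwise face other than possibly the root.

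First I would make precise the one subtlety hidden in the word ``clockwise face'': a face-flip acts on a \emph{clockwise face distinct from the root}, so strictly speaking the condition produced by minimality is ``no clockwise face except possibly the root face.'' I would therefore observe that in a bicolorable orientation the root face cannot be clockwise either, because its clockwise-ness would be exchanged under a face-flip at any other face and, more to the point, one can apply the (dual) argument used for vertex-push: the dual-geodesic orientation is by construction the dual of the geodesic orientation, in which the root vertex is the unique sink; dually the root face of the dual-geodesic orientation is the unique counterclockwise face, hence in particular not clockwise. Since any bicolorable orientation with no clockwise face distinct from the root is forced by the lattice argument to equal the dual-geodesic orientation, it inherits this property, so in fact it has no clockwise face at all, and the statement as phrased is justified.

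Concretely the steps are: (i) invoke the previous corollary to get the distributive lattice structure with minimum the dual-geodesic orientation; (ii) recall that the minimum of a finite lattice is exactly the unique element on which the covering (here face-flip) operation cannot act; (iii) use the alternative description of face-flips to translate ``no applicable face-flip'' into ``no clockwise face distinct from the root''; (iv) dualize the fact that the geodesic orientation's only sink is the root vertex to conclude that the dual-geodesic orientation has no clockwise face at all, upgrading the characterization to the clean statement.

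The only real point requiring care — the ``main obstacle'' — is step (ii)–(iii): one must be sure that ``minimal for the transitive closure of face-flip'' coincides with ``no single face-flip applies,'' i.e.\ that there is no orientation which admits a face-flip but only into orientations that then lead back up. This is immediate from the fact that a face-flip strictly decreases a well-defined potential (the number of face-flips needed to reach the dual-geodesic orientation, equivalently an explicit height function coming from the distributive lattice / from Propp's setup), so a single available face-flip always produces a strictly smaller orientation; hence an orientation is the minimum iff no face-flip is available. Everything else is a direct transcription of the preceding results and definitions.
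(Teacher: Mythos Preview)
Your steps (i)--(iii) are correct and are exactly the paper's (implicit) argument: the corollary is meant to be read off immediately from the lattice structure, since the minimum is the unique orientation admitting no face-flip, i.e.\ no clockwise face distinct from the root.

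Step (iv), however, contains a genuine error. Under the duality convention of the paper, a sink vertex corresponds to a \emph{clockwise} face, not a counterclockwise one: this is precisely why the face-flip (acting on a clockwise face) is declared to be the dual of the vertex-push (acting on a sink). Since the root vertex is the unique sink of the geodesic orientation, the root face is the unique \emph{clockwise} face of the dual-geodesic orientation --- not the unique counterclockwise face as you write. Your ``upgrading'' argument therefore proves the opposite of what you want, and in fact the literal statement ``no clockwise face at all'' is false for the dual-geodesic orientation.

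The resolution is simply that the corollary's phrasing is slightly loose: the precise characterization is ``no clockwise face \emph{other than the root face},'' and this is exactly the form in which the paper uses it later (see \cref{clm:noCW} in the proof of \cref{lem:OtoBC}). So your steps (i)--(iii) already constitute the full argument; step (iv) should be dropped rather than repaired.
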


\begin{rem}
In recent attempts to extend orientations on maps of higher genus, the notion of $\alpha$-orientations, due to Felsner \cite{Fels04}, has been used (\eg in \cite{GoKnLe16}).
This leads to study Eulerian orientations (see \cref{rem:eulOr}) instead of bicolorable orientations. 

Unfortunately, Eulerian orientation with the face-flip operation give rise to several unconnected lattices. A classical approach would be to canonically select one of these connected component, and only work on this one. A natural choice for such a component is the set of bicolorable orientations, which are indeed a strict subset of Eulerian orientation. Therefore, defining bicolorable orientations in the first place seems more convenient for our purposes. 
\end{rem}

In the rest of this paper, we will use orientation of maps as an additional layer of information, useful for algorithmic purposes, but determined in a canonical way using \cref{cor:lat}.

\section{Closing and opening maps}
\label{sec:bijection}

In this section, we describe an algorithm called the \emph{opening algorithm}, that starts from an oriented map and creates a unicellular blossoming oriented map whose \emph{closure} is the original map. The planar version of this algorithm was first described by Schaeffer in \cite{Scha97}. It was then generalized by Bernardi in \cite{Bern06}, and Bernardi and Chapuy in \cite{BerCha11} for the higher-genus case, to a different setup where maps come with an orientation. This is slightly different from this work since we give a canonical orientation to maps. See \cref{rem:BerCha} for a more detailed discussion of the differences with this work.

\subsection{Blossoming maps and their closure}

\begin{defn}[blossoming map, bud, leaf]
A \emph{blossoming} map $b$ is a map with additional stems attached to its corners. These stems are oriented and hence can be of two types; an outgoing stem is called a \emph{bud}, while an ingoing stem is called a \emph{leaf}. We require that a blossoming map has as many buds as leaves. 
Blossoming maps are always assumed to be rooted on a bud. 
\end{defn}

\begin{defn}[interior map]\label{def:interiorMap}
The \emph{interior map} of a blossoming map $b$, denoted $\unbud{b}$, is the map obtained from $b$ by removing all its stems. 
\end{defn}

Most blossoming maps we usually consider are oriented, which leads to these additional definitions:

\begin{defn}[blossoming degrees]
In a blossoming oriented map, the \emph{interior degree} (resp.~\emph{blossoming degree}, resp.~\emph{degree}) of a vertex is the degree of this vertex in the interior map (resp.~the number of stems attached to it, resp.~the sum of the interior and blossoming degrees). These can all be refined into \emph{ingoing} and \emph{outgoing} degrees. 
\end{defn}

As stated in \cref{thm:mainGen}, unicellular blossoming maps are instrumental to our approach, because they can encode maps, while being easier to analyse. To describe the bijection mentioned in \cref{thm:mainGen}, called the \emph{closing algorithm}, we first introduce the \emph{contour word} of a blossoming unicellular map.

\begin{defn}[contour word]
Let $b$ be a blossoming unicellular map. The \emph{contour word} of $b$ is the word on $2$ letters $U$ and $D$ defined as follows: when doing a clockwise tour of the unique face (which means that the face is on the right), starting from the root bud, write $U$ (for up-step) for each bud and $D$ (for down-step) for each leaf. The contour word can naturally be seen as a $1$-dimensional walk with up- and down-steps, starting and ending at height $0$.
\end{defn}

We describe in \cref{alg:closure} how a unicellular blossoming map can be \emph{closed} into a general map (see \cref{fig:closure} for a planar example, and \cref{fig:algo} from right to left for a genus-$1$ example). The result of the closing algorithm applied to a map $b$ is called the \emph{closure} of $b$ and denoted $Close(b)$.

\begin{algorithm}[h]
\caption{the closing algorithm}
\label{alg:closure}
Let $b$ be a unicellular blossoming map. 

We write the contour word of $b$ and match its steps by pairs upstep/downstep: each up-step $U$ going from height $i$ to $i+1$ is matched to the first down-step $D$ after $U$ going from height $i+1$ to $i$. 

This is done in a cyclic manner, meaning that the last upstep of the contour word going from height $i$ to height $i+1$ is matched with the first down-step $D$ going from height $i+1$ to height $i$. 

The stems corresponding to matched steps are then merged into a single oriented edge. 

The new map is rooted on the corner just on the right of the edge formed by the former root bud, around the root vertex.
\end{algorithm}

\begin{figure}[h]
\vspace{-10pt}
\centering
\begin{subfigure}[c]{.48\linewidth}
\includegraphics[width=\textwidth,page=1]{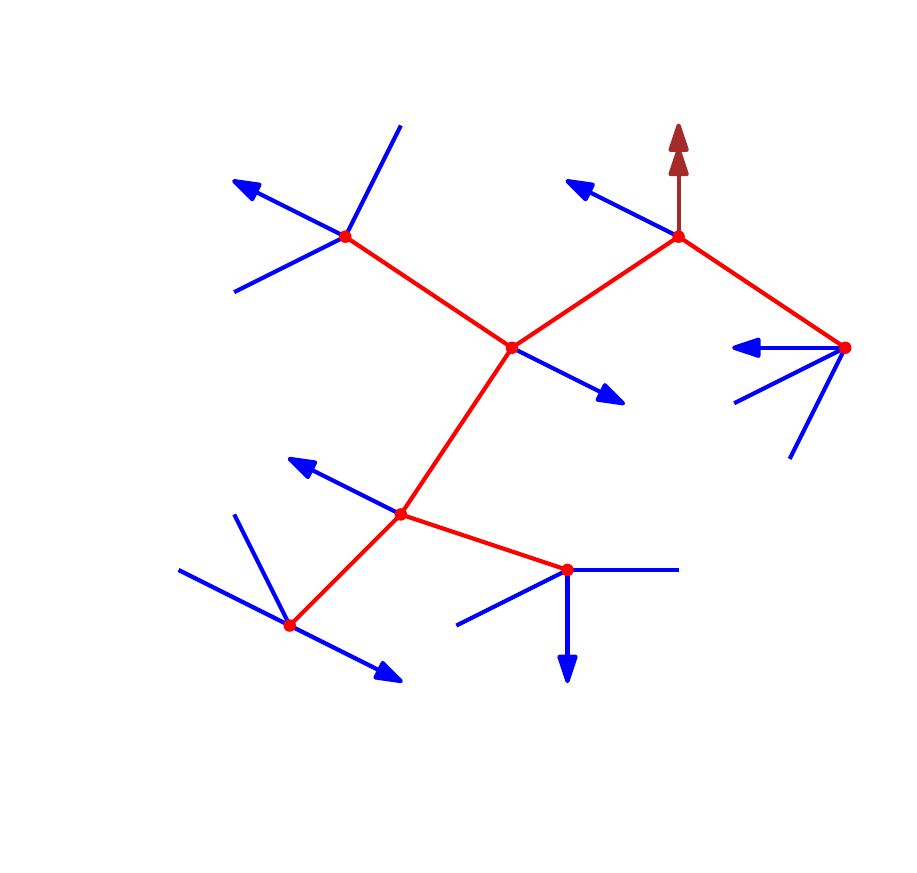}
\end{subfigure}
\begin{subfigure}[c]{.48\linewidth}
\flushright
\includegraphics[width=\textwidth,page=19]{exClosureTree}
\end{subfigure}
\vspace{-40pt}
\caption{The closure of a (well-rooted) blossoming tree.}
\label{fig:closure}
\end{figure}

Note that the way stems are matched, which is similar to a well-parenthesizing matching, implies that the created edges are non-crossing. 
Note also that the cyclic definition of the closing algorithm means that the matching does not depend on the root.
This implies that several blossoming unicellular maps can lead to the same map, up to the position of the root. 
However this is not the case anymore if we restrict the way a blossoming map can be rooted.

\begin{defn}[well-rooted]\label{def:wroot}
A map $b$ is called \emph{well-rooted} if its contour word is a Dyck word.
\end{defn}

\begin{rem}\label{rem:wr}
When applying the closing algorithm to a well-rooted map, since the contour word is a Dyck path, the cyclic definition of the algorithm is not needed, which implies that all closing edges have the root on their right.
\end{rem}

\begin{defn}[rootable stem, well-rootable stem]
A stem is called \emph{rootable} if it is either a leaf or the root bud.

Suppose we change the root up-step of the contour word of a map $b$ into a down-step. The contour word now goes from height $0$ to height $-2$, and its minimum height is denoted $-k$. 
The first step going from height $-k+2$ to height $-k+1$ and the first step going from height $-k+1$ to height $-k$ are called \emph{well-rootable steps/stems}. 
\end{defn}

Note that well-rootable stems are rootable, and that if we apply a cyclic permutation to a contour word, the set of its well-rootable steps remains the same.

\begin{rem}
A map $b$ is \emph{well-rooted} if and only if its root bud is well-rootable.
\end{rem}

\begin{defn}[undirected map, root-equivalence, unrooted map]\label{def:unroot}
The \emph{undirected map} of a blossoming map is the map obtained by forgetting the orientation of both the edges (if relevant) and the stems.
Two rooted blossoming unicellular maps are called \emph{root-equivalent} if they have the same undirected map and the same set of rootable stems (in particular they do not necessarily have the same root).
The \emph{unrooted map} $\undir{b}$ of $b$ is the equivalence class of $b$ for root-equivalence.
\end{defn}

\begin{rem}\label{rem:unroot}
The blossoming orientation of $b$ can be easily recovered from $\undir{b}$ if we know which rootable stem of $\undir{b}$ is the root of $b$.
\end{rem}
\begin{rem}\label{rem:wroot}
Note that it is possible to know which rootable stems of an unrooted map are well-rootable. As a consequence, a well-rooted map can alternatively be seen as unrooted map with a distinguished well-rootable stem. This point of view will be useful in \cref{subs:wroot}.
\end{rem}

\subsection{The opening algorithm}

Given a rooted oriented map $m$, we describe the \emph{opening algorithm} as follows (a more rigorous description will be given later in \cref{alg:opening}). We explore the map starting from the root. When we meet an unexplored edge, if it is ingoing, we follow it, if it is outgoing, we cut it and replace it by a bud. When we meet an already explored edge, it was either followed, in which case we follow it back, or cut, in which case we just add a leaf. We stop when we get back to the root. The resulting blossoming map is called the \emph{opening} of $m$ and denoted $Open(m)$.
A planar example of an execution of the opening algorithm is given in \cref{fig:exCornerOpening}, and a genus-$1$ example is given in \cref{fig:algo}, from left to right. 

\begin{figure}
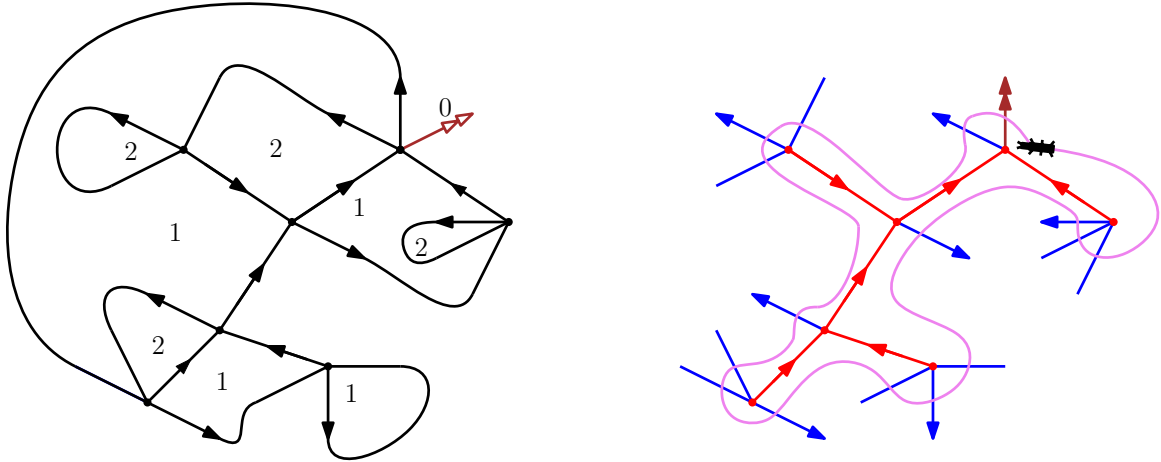

\vspace{-20pt}
\centering
\begin{subfigure}[c]{.48\linewidth}
\centering
\includegraphics[width=\textwidth,page=6]{exClosureTree}
\end{subfigure}
\begin{subfigure}[c]{.48\linewidth}
\centering
\includegraphics[width=\textwidth,page=14]{exClosureTree}
\end{subfigure}
\vspace{-40pt}
\caption{The opening of a planar map with dual-geodesic orientation.}
\label{fig:exCornerOpening}
\end{figure}

More formally, we define the algorithm as a walk in the \emph{corner map}.
\begin{defn}[corner map]
Recall that a \emph{corner} $c$ is an adjacency between a face and a vertex, that we respectively denote $\face(c)$ and $\vertex(c)$. 
We define two permutations on the set of corners. If $c$ is a corner, $\nextF(c)$ is the next corner around $\face(c)$ in clockwise order, while $\nextV(c)$ is the next corner around $\vertex(c)$ in counter-clockwise order. 
The inverse permutations are naturally called $\prevF(c)$ and $\prevV(c)$. 
A corner is delimited by two edges: $\nextE(c)$ joins $\vertex(c)$ and $\vertex(\nextF(c))$ and separates $\face(c)$ and $\face (\nextV(c))$, while $\prevE(c)$ joins $\vertex(c)$ and $\vertex(\prevF(c))$ and separates $\face(c)$ and $\face(\prevV(c))$.

The \emph{corner map} of a map $m$ is the oriented map whose vertices are the corners of $m$ and which has for any corner $c$ an edge from $c$ to $\nextV(c)$ and an edge from $c$ to $\nextF(c)$.
\end{defn}
The corner map is an Eulerian $4$-valent graph, endowed with an Eulerian orientation.
These definitions can be visualized in \cref{fig:defCorner}.

A formal definition of the opening algorithm, seen as an oriented walk on the corner map, is given in \cref{alg:opening}, and illustrated in \cref{fig:exCornerOpening}.

\begin{algorithm}[h]
\caption{the opening algorithm}
\label{alg:opening}
\begin{algorithmic} 
\REQUIRE A map $m$ embedded on a surface $\mathcal{S}$, rooted at a corner $c_0$, along with its dual-geodesic orientation.
\ENSURE An oriented blossoming embedded graph $b=\text{open}(m)$, embedded on $\mathcal{S}$.
\STATE Set $c=c_0$, $b=\emptyset$, and $E_V=\emptyset$ ($E_V$ is the set of visited edges). 
\REPEAT
\STATE $e=\nextE(c)$.
\IF{$e\notin E_V$ and $e$ is oriented toward $\vertex(c)$}
\STATE add $e$ to $E_V$
\STATE add $e$ to $b$
\STATE $c\leftarrow\nextF(c)$
\ELSIF{$e\notin E_V$ and $e$ is outgoing from $\vertex(c)$}
\STATE add $e$ to $E_V$
\STATE Add a bud to $b$ in place of $e$.
\STATE $c\leftarrow\nextV(c)$
\ELSIF{$e\in E_V$ and $e$ is oriented toward $\vertex(c)$}
\STATE Add a leaf to $b$ in place of $e$.
\STATE $c\leftarrow\nextV(c)$
\ELSIF{$e\in E_V$ and $e$ is outgoing from $\vertex(c)$}
\STATE $c\leftarrow\nextF(c)$
\ENDIF
\UNTIL {$c=c_0$}
\RETURN $b$.
\end{algorithmic}
\end{algorithm}

This alternative definition highlights well the symmetry between the roles of faces and vertices, which we express in \cref{lem:dualCut}, and illustrate in \cref{fig:exDualOpening}. 
In addition to \cref{def:interiorMap}, the following two definitions are needed.
\begin{defn}[reflected map]
To a map $m$ we associate a \emph{reflected map} $\switch{m}$ which is the same as $m$ except that we switch the orientation of the underlying surface, which amounts to exchanging clockwise and counterclockwise, left and right.
\end{defn}
\begin{defn}[complement submap]
To a subgraph $s$ of a graph $g$ we associate the \emph{complement subgraph} $\complementMap{s}$ defined with the same set of vertices along with all edges in $g$ but not in $s$. This definition is naturally extended to the complement of a map by preserving the embedding. 
\end{defn}

\begin{lem}
\label{lem:dualCut}
Up to a change of orientation of the surface, a map and its dual yield complement submaps by the opening algorithm:
\[\unbud{\cut{m}}=\complementMap{\left(\unbud{\cut{\switch{\dual{m}}}}\right)}. \]
\end{lem}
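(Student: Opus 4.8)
The plan is to unpack both sides of the claimed identity as walks on corner maps and show that they visit complementary edges. First I would recall that, by construction, the opening algorithm is a deterministic walk on the corner map of $m$: at each corner $c$ it examines $e = \nextE(c)$, and either follows the edge (moving via $\nextF$) or cuts it to a bud (moving via $\nextV$), with leaves and back-tracking handled by the visited-edge bookkeeping. The key structural observation is that the corner map of $m$ and the corner map of $\switch{\dual m}$ have the \emph{same} underlying set of corners — a corner is an incident (vertex, face) pair, and dualizing swaps the two coordinates while reflecting swaps the cyclic conventions — so that the four permutations $\nextF,\nextV,\prevF,\prevV$ of $m$ get matched, under this identification, with $\nextV,\nextF,\prevV,\prevF$ of $\switch{\dual m}$ (the reflection is exactly what is needed to turn the clockwise-around-face / counter-clockwise-around-vertex conventions into their swapped versions without reversing orientation). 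Likewise the dual-geodesic orientation of $m$ becomes, after dualizing and reflecting, the dual-geodesic orientation of $\switch{\dual m}$: this follows from \cref{cor:lat}, since a clockwise face of one corresponds to a (reflected) object with no clockwise face on the other side — I would check this compatibility carefully, as it is where the choice of \emph{dual-geodesic} (rather than an arbitrary) orientation is used.

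Next I would run the two opening algorithms "in parallel" on this shared corner set. The central claim is that at each step the walk for $m$ and the walk for $\switch{\dual m}$ traverse the corner map along complementary pairs of the four outgoing arcs: whenever the $m$-walk at corner $c$ decides to \emph{follow} an edge $e$ (so $e$ is ingoing at $\vertex(c)$ and unvisited, and $c \mapsto \nextF(c)$), the $\switch{\dual m}$-walk, reading the same local picture with vertices and faces swapped, sees the dual edge $\dual e$ and makes the \emph{cut} decision (so $\dual e$ becomes a bud, and the corresponding corner moves by what is $\nextF$ on the $m$-side); and symmetrically, a \emph{cut} on the $m$-side corresponds to a \emph{follow} on the $\switch{\dual m}$-side. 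Thus the set of edges that end up in $\unbud{\cut m}$ (the followed edges of $m$) is, under primal--dual correspondence of edges, exactly the complement of the set of edges that end up in $\unbud{\cut{\switch{\dual m}}}$; since $\complementMap{\,\cdot\,}$ is defined precisely as "same vertices, all edges of the ambient graph not in the submap, embedding preserved", and duality is an involution on edges, this yields the stated equality of embedded graphs. I would also need to confirm that the two walks visit corners in a coordinated way so that they both terminate at $c_0$ simultaneously and so that the "already visited" tests agree — this is really the assertion that the decompositions of the Eulerian $4$-valent corner map into the "followed/cut" walk and its complement are dual to each other, which is a clean combinatorial statement about alternating decompositions.

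Concretely, the steps in order would be: (1) set up the canonical bijection between the corner set of $m$ and that of $\switch{\dual m}$, and verify how $\nextF,\nextV$ transport under it; (2) verify that the dual-geodesic orientation of $m$ transports to the dual-geodesic orientation of $\switch{\dual m}$ (using \cref{cor:lat}); (3) phrase one step of \cref{alg:opening} for each map in terms of the four arcs leaving the current corner in the corner map, and observe that "follow vs.\ cut" for $m$ becomes "cut vs.\ follow" for $\switch{\dual m}$ on the matched arc; (4) conclude by induction on the number of steps that the two walks are synchronized, that an edge is followed by the $m$-walk iff its dual is cut by the $\switch{\dual m}$-walk, and hence that the interior maps of the two openings are complementary submaps; (5) check the root corner matches up (the root corner is shared since duality keeps the same corner). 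The main obstacle I expect is step (3)–(4): making the parallel-walk bookkeeping (the set $E_V$ of visited edges, the leaves added on revisits, and the possibility that the two algorithms momentarily "look" desynchronized at a back-tracking step) rigorous enough to see that at every moment the $m$-walk is at a corner whose partner under the bijection is the current corner of the $\switch{\dual m}$-walk, or more precisely that there is a simple relation between the two current corners that is preserved by a single step. Once that invariant is pinned down, complementarity of the edge sets and the identity are immediate.
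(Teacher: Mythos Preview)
Your proposal is correct and follows the paper's approach: identify the corners of $m$ with those of $\switch{\dual m}$, observe that $\nextF$ and $\nextV$ swap under duality-plus-reflection while $\nextE$ is preserved, and deduce that the two walks traverse the identical sequence of corners while making complementary follow/cut decisions at each step. Two small points: your worry about synchronization in steps (3)--(4) dissolves once you note that the two walks are literally the same walk on the shared corner set (so the visited-edge bookkeeping matches automatically), and your step (2) is unnecessary for this lemma --- the ingoing/outgoing complementarity at a corner comes straight from the definition of the dual orientation together with reflection, not from \cref{cor:lat}, and the statement in fact holds for any orientation on $m$ with the induced orientation on $\switch{\dual m}$.
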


\begin{figure}[h]
\centering
\begin{subfigure}[c]{.39\linewidth}
\centering
\includegraphics[width=\textwidth]{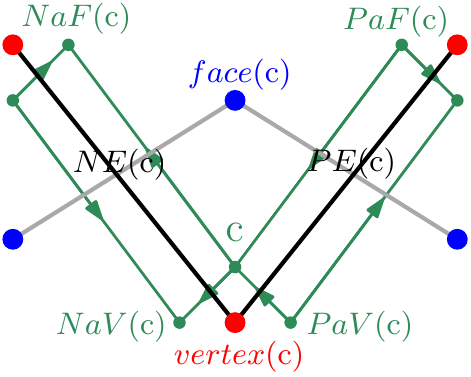}
\caption{A partial map, with red vertices, blue faces, black edges, and grey dual edges, along with its oriented corner map, in green.}
\label{fig:defCorner}
\end{subfigure}
\begin{subfigure}[c]{.60\linewidth}
\flushright
\includegraphics[width=0.9\textwidth,page=30]{exClosureTree}
\caption{The opening algorithm and its reflected dual yield complement maps.}
\label{fig:exDualOpening}
\end{subfigure}
\caption{The opening algorithm is a walk on the corner map.}
\end{figure}

\begin{proof}
Let $m$ be a map, $c$ a corner of $m$, and $c'$ the same corner, in $\switch{\dual{m}}$. It is easy to see that $\nextF(c)$ and $\nextV(c')$ (resp. $\nextV(c)$ and $\nextF(c')$) are the same corners, and that consequently, $\prevF(c)=\prevV(c')$, $\prevV(c)=\prevF(c')$, $\nextE(c)=\nextE(c')$, and $\prevE(c)=\prevE(c')$.

Therefore, in the course of the opening algorithm, ran in parallel on $m$ and $\switch{\dual{m}}$, we always get to complementary states in \cref{alg:opening}. Hence the walks on the corner maps are exactly the same, and therefore the resulting maps are complement one to another.
\end{proof}

\subsection{Opening a bicolorable map}
\label{subsec:closure}

We are now willing to apply the opening algorithm to bicolorable maps with dual-geodesic orientation, and prove that this yields a bijection. We first describe some properties that will prove useful to describe the resulting maps.

\begin{defn}[well-oriented map]\label{def:wor}
A unicellular map $b$ is \emph{well-oriented} if in a tour of the face starting from the root, each edge is first followed backward and then forward. 
\end{defn}

If $b$ is a tree, this means that any interior edge is oriented toward the root.
Note that this definition does not depend on whether the tour is clockwise or counterclockwise.

Any unicellular map has a unique well-orientation, which can be straightforwardly obtained by doing a tour of the face. 
In relation to \cref{rem:unroot}, and in view of \cref{subs:wroot}, this implies that the interior orientation of a well-rooted well-oriented unicellular blossoming map $b$ can be easily recovered from the unrooted map $\undir{b}$ if we know which rootable stem of $\undir{b}$ is the root of $b$.

\begin{defn}[well-labeled map]\label{def:wlab}
A blossoming oriented map is said to be \emph{well-labeled} if its corners are labeled in such a  way that:
\begin{compactitem}
\item the labels of two corners adjacent around a vertex differ by $1$, in which case the higher label is to the right of the separating edge (or stem), 
\item the labels of two corners adjacent along an edge coincide, and
\item the root bud has labels $0$ and $1$.
\end{compactitem}
\end{defn}

Looking at the sequence of labels of corners around any fixed vertex, it is clear that the orientation of a well-labeled map is in particular Eulerian.
Note that if the map has no stem, then having a well-labeling is equivalent to having a bicolorable orientation. In particular this is stronger than having an Eulerian orientation.

The set of well-rooted well-labeled well-oriented unicellular blossoming maps, counted by vertex degrees (similarly to bicolorable maps), is denoted~$\mathcal{O}$. The subset of $\mathcal{O}$ made of $4$-valent maps is denoted $\mathcal{O}^\times$.

Recall from \cref{subs:def} that the weight of a bicolorable map $m$ is $\prod_{k>0}z_k^{v_{2k}(m)}$. Therefore, two maps have the same weight if and only if they have the same repartition of vertex degrees.

We can now state our main bijective theorem:

\begin{thm}
\label{thm:bijCut}
When performed on the dual-geodesic orientation, the opening algorithm is a weight-preserving bijection from $\mathcal{BC}_g$ to $\mathcal{O}_g$, whose inverse is the closing algorithm. 
Therefore, $BC_g(\textbf{z})=O_g(\textbf{z})$. 
\end{thm}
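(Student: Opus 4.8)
The plan is to prove \cref{thm:bijCut} by showing that the opening and closing algorithms are well-defined maps between $\mathcal{BC}_g$ and $\mathcal{O}_g$ and are mutually inverse; weight-preservation and genus-preservation will then follow essentially for free from the construction. I would organize the argument in four blocks: (1) the opening algorithm applied to a bicolorable map with its dual-geodesic orientation terminates and produces a well-rooted, well-labeled, well-oriented unicellular blossoming map of the same genus; (2) the closing algorithm applied to a map in $\mathcal{O}_g$ produces a bicolorable map of the same genus, and the orientation it carries is exactly the dual-geodesic one (hence the target really is $\mathcal{BC}_g$, with its canonical orientation, not merely some oriented map); (3) $\mathrm{Close}\circ\mathrm{Open}=\mathrm{id}$ on $\mathcal{BC}_g$; (4) $\mathrm{Open}\circ\mathrm{Close}=\mathrm{id}$ on $\mathcal{O}_g$.

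For block (1), I would run the opening algorithm as the walk on the corner map from \cref{alg:opening}. Since each edge is visited at most a bounded number of times and every step advances along a corner, termination is immediate; the key point is that when the walk stops at $c_0$ it has in fact traversed \emph{every} corner exactly once in the appropriate sense, so the resulting blossoming graph is unicellular — this is the standard ``the walk closes up only after a full tour'' argument, and here the Eulerian-ness of the dual-geodesic orientation (\cref{rem:eulOr}) is what guarantees the local in/out balance needed. The labels: since each vertex is Eulerian and the dual-geodesic orientation is bicolorable, corners around each vertex admit the $\pm 1$ labeling of \cref{def:wlab}, with the root corner getting $0,1$ because $m$ is rooted; along an edge the two corner-labels coincide by the bicolorability (equivalently, the dual-geodesic labels are the face-distance labels of the dual). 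Well-orientedness follows because the opening follows an ingoing edge the first time and cuts an outgoing one, so in the contour tour each surviving/created half-edge appears first as a back-step then as a forward-step. Well-rootedness — that the contour word is a Dyck word — is where I would need the minimality property \cref{cor:lat}: because the dual-geodesic orientation has no clockwise face, the height never drops below $0$ in the tour starting from the root bud. Genus is preserved because opening only cuts edges and re-attaches them as stems, never changing the embedding's surface, and the result is unicellular hence a map of that same genus.

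For blocks (3) and (4), the cleanest route is to exploit the explicit matching descriptions. The closing algorithm matches buds to leaves by the well-parenthesization of the contour word; I want to show this matching is exactly the reverse of the opening. Concretely: when opening cuts an outgoing edge $e$ it creates a bud, and when it later reaches the other end of $e$ (already visited, ingoing from that side) it creates a leaf; I would track heights in the contour word and show the bud at height $i\to i+1$ created from $e$ is matched by the first subsequent down-step $i+1\to i$, which is precisely the leaf coming from the other end of $e$ — this is the bijective heart and requires a careful bookkeeping of how the corner-map walk interleaves visits to the two sides of each cut edge, using that each edge separates two faces which become, in the tour, nested intervals. Conversely, starting from $b\in\mathcal{O}_g$, closing produces some bicolorable map $m'$ (bicolorability from the well-labeling, which survives the merging since matched stems carry equal labels); running opening on $m'$ with \emph{its} dual-geodesic orientation must recover $b$, and here I must check that the orientation $b$ induces on $m'$ (via well-labeling and well-orientation) \emph{is} the dual-geodesic one — again invoking \cref{cor:lat}: $m'$ has no clockwise face because $b$ is well-rooted (height $\geq 0$). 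Finally weight-preservation is trivial: opening does not merge or split vertices, so $v_{2k}$ is unchanged.

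The main obstacle, as usual for these blossoming bijections, is block (3)/(4): proving that the well-parenthesized stem-matching in the closure coincides with the pairing implicitly built by the opening walk, and — the subtler half — proving that the orientation reconstructed on $\mathrm{Close}(b)$ is genuinely the canonical dual-geodesic orientation rather than just \emph{some} bicolorable orientation. That second point is exactly why the lattice-minimality characterization \cref{cor:lat} (``no clockwise face'') was set up, and in higher genus one must be careful that there is no extra global/non-contractible obstruction: I would argue that the label function on corners of $\mathrm{Close}(b)$, read off from the well-labeling of $b$, is well-defined globally (single-valued around every cycle, not just locally consistent) precisely because $b$ is a genuine blossoming \emph{map} with a consistent labeling to begin with, and that this global label function exhibits $\mathrm{Close}(b)$ as bicolorable with no clockwise face.
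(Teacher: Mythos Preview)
Your overall architecture matches the paper's: show $\mathrm{Open}$ lands in $\mathcal{O}_g$ (the paper's \cref{lem:BCtoO}), show $\mathrm{Close}$ lands in $\mathcal{BC}_g$ with its dual-geodesic orientation (the paper's \cref{lem:OtoBC}, via the no-clockwise-face criterion of \cref{cor:lat}), then check the two composites are identities. Your block~(2) is essentially what the paper does; the paper's concrete argument for ``no clockwise face'' (\cref{clm:noCW}) singles out, for each non-root face $f$, the unique closure edge $e(f)$ carrying the smaller label on the $f$-side and observes it is counterclockwise --- this is exactly the height/label bookkeeping you sketch, with well-rootedness used to know the bud precedes the leaf.

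The genuine gap is in your block~(1), at unicellularity and genus-preservation. You write that the corner-map walk ``closes up only after a full tour'' because the orientation is Eulerian, and infer the result is unicellular of genus~$g$. Eulerianness alone does not give this; \cref{rem:BerCha} warns precisely that for generic (even left-connected) orientations the opened submap can be unicellular of genus strictly less than~$g$, i.e.\ its single face need not be a disc. The paper does \emph{not} argue directly on the walk. Instead it uses the duality lemma \cref{lem:dualCut}: opening a \emph{bipartite} map with its \emph{geodesic} orientation produces the rightmost BFS exploration tree, and then \cref{lem:dualCut} transports this to the bicolorable/dual-geodesic side, yielding that $\unbud{\cut{m}}$ is the dual of the complement of a spanning tree of $\dual m$ --- which is automatically a unicellular map of maximal genus. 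This duality reduction is the key step you are missing; without it you would need a separate argument that the opening walk on the dual-geodesic orientation really visits every corner, and ``Eulerian'' is not that argument. Well-labeling and well-rootedness then come for free in the paper's route: the corner labels are the face-distances in $\dual m$, hence non-negative with the root at~$0$.

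Finally, your blocks~(3)--(4) are more elaborate than needed. The paper dispatches $\mathrm{Close}\circ\mathrm{Open}=\mathrm{id}$ in one line (once the opening is unicellular of genus~$g$, there is a unique non-crossing matching of its stems), and $\mathrm{Open}\circ\mathrm{Close}=\mathrm{id}$ by observing that in the tour of $\mathrm{Close}(b)$ every closure edge is first met outgoing (by well-rootedness, \cref{rem:wr}) while every interior edge is first met ingoing (by well-orientedness), so the opening re-cuts exactly the closure edges.
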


\begin{figure}
  \centering
  \begin{subfigure}[c]{.55\linewidth}
  \centering
   \includegraphics[scale=1,page=2]{longEx2RadMap.pdf}
   \caption{A map of $\mathcal{BC}_1^{\times}$ with its dual-geodesic orientation.}
  \label{fig:algosCl}
  \end{subfigure}
  \begin{subfigure}[c]{.44\linewidth}
  \centering
   \includegraphics[scale=1,page=2]{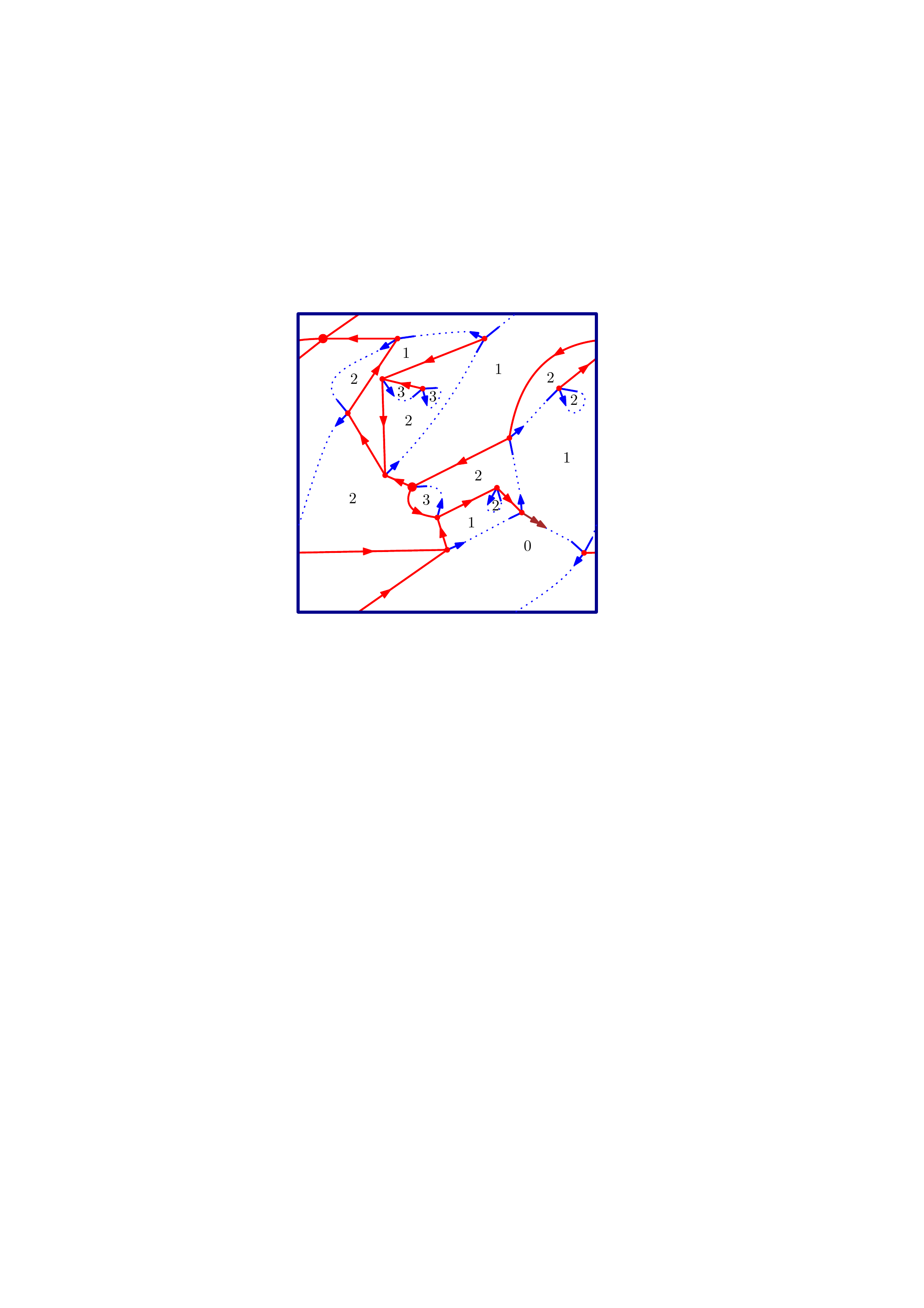}
   \caption{A map of $\mathcal{O}_1^{\times}$.}
  \label{fig:algosOp}
  \end{subfigure} \hfill
  \caption{A $4$-valent bicolorable map with dual-geodesic orientation, and its opening.}
  \label{fig:algo}
\end{figure}

Applying \cref{thm:bijCut} to $\textbf{z}=(0,z,0,\cdots)$ and using \cref{prop:radMap}, we obtain the following corollary:

\begin{cor}\label{cor:genToOp}
The opening algorithm on $4$-valent bicolorable maps yields: \[ M_g(z)=O_g^\times(z).\]
\end{cor}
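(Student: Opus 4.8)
The plan is to deduce the corollary formally from \cref{thm:bijCut} and \cref{prop:radMap} by specializing the multivariate identity $BC_g(\mathbf{z})=O_g(\mathbf{z})$ at $\mathbf{z}=(0,z,0,0,\ldots)$. First I would recall that, by the definition of the refined generating series in \cref{subs:def}, $BC_g(z_1,z_2,\ldots)=\sum_{m\in\mathcal{BC}_g}\prod_{k\geq 1}z_k^{v_{2k}(m)}$, so that setting $z_1=z_3=z_4=\cdots=0$ kills every term coming from a bicolorable map with a vertex of degree other than $4$ and leaves exactly $\sum_{m\in\mathcal{BC}_g^\times}z^{v(m)}=BC_g^\times(z)$; this is precisely the identity $BC^\times(z)=BC(0,z,0,0,\ldots)$ already recorded after \cref{prop:radMap}.

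Next I would apply the same specialization to the right-hand side. Since $\mathcal{O}$ is by definition counted by vertex degrees in the same fashion as $\mathcal{BC}$ — that is, $O_g(\mathbf{z})=\sum_{b\in\mathcal{O}_g}\prod_{k\geq 1}z_k^{v_{2k}(b)}$, where $v_{2k}(b)$ is the number of vertices of total degree $2k$ — the substitution $\mathbf{z}=(0,z,0,0,\ldots)$ retains exactly those $b\in\mathcal{O}_g$ all of whose vertices have degree $4$, namely the subfamily $\mathcal{O}_g^\times$, so $O_g(0,z,0,0,\ldots)=O_g^\times(z)$. Combining, the identity $BC_g(\mathbf{z})=O_g(\mathbf{z})$ of \cref{thm:bijCut} specializes to $BC_g^\times(z)=O_g^\times(z)$; equivalently, since \cref{thm:bijCut} provides a weight-preserving bijection and the weight of a map records its full vertex-degree distribution, that bijection restricts to a bijection between $\mathcal{BC}_g^\times$ and $\mathcal{O}_g^\times$, giving the same equality of series. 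Finally I would invoke \cref{prop:radMap}, which identifies $M_g(z)=BC_g^\times(z)$ through the radial-map construction, and chain the two equalities to obtain $M_g(z)=BC_g^\times(z)=O_g^\times(z)$, as claimed.

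There is essentially no genuine obstacle here: the only point requiring care is the bookkeeping of the weight conventions, namely verifying that the monomial weight attached to a map of $\mathcal{O}_g$ is indexed by vertex degrees in exactly the same way as for $\mathcal{BC}_g$, so that the single substitution $\mathbf{z}\mapsto(0,z,0,0,\ldots)$ simultaneously cuts both families down to their $4$-valent parts; this is guaranteed by the definition of $\mathcal{O}$ and by the weight-preservation clause in \cref{thm:bijCut}. Genus is handled automatically, since \cref{thm:bijCut} is already stated genus by genus.
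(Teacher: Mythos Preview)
Your proof is correct and follows exactly the approach of the paper: specialize the multivariate identity of \cref{thm:bijCut} at $\mathbf{z}=(0,z,0,\ldots)$ to obtain $BC_g^\times(z)=O_g^\times(z)$, then invoke \cref{prop:radMap} for $M_g(z)=BC_g^\times(z)$. The paper states this in one line; you have simply spelled out the bookkeeping in detail.
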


\begin{rem}\label{rem:BerCha}
A more complete study of the opening algorithm in higher genus was carried in \cite{BerCha11} by Bernardi and Chapuy. 
Instead of oriented maps, they consider covered maps, that are maps with a marked unicellular spanning submap, and show that they are in correspondence with oriented maps equipped with a so-called \emph{left-connected} orientation. 
They show that the opening of such oriented maps gives rise to a unicellular spanning submap. However, this submap can be of any genus smaller or equal to the genus of the underlying surface. 
We could show that the dual-geodesic orientation is left-connected to conclude that the opening algorithm yields a unicellular spanning submap, but we would still have to prove that this map is of maximal genus. 
However, in our particular case, because the chosen orientation is not any left-connected orientation, but the dual of the geodesic orientation, we don't need to use this result. 
We show directly that the opening of a map with geodesic orientation is easily described, and use \cref{lem:dualCut} to conclude about maps with dual-geodesic orientation.
\end{rem}

\begin{proof}[Proof of \cref{thm:bijCut}]
The opening and closing algorithms are weight-preserving.
We prove in \cref{lem:BCtoO} that the image by the opening algorithm of a map of $\mathcal{BC}_g$ endowed with its dual-geodesic orientation belongs to $\mathcal{O}_g$, and in \cref{lem:OtoBC} that the image by the closing algorithm of a map of $\mathcal{O}_g$ belongs $\mathcal{BC}_g$, and is endowed with its dual-geodesic orientation.

Applying the opening algorithm to the closure of a well-rooted well-labeled well-oriented unicellular blossoming map $b$ yields the original map $b$ itself. Indeed, any closure edge is first met outgoing (see \cref{rem:wr} on well-rooted maps), whereas any non-closure edge is first met ingoing, because $b$ is well-oriented.

Reciprocally, if the opening of an oriented map $m$ of genus $g$ yields a unicellular blossoming map $b$ of genus $g$, then the closure of $b$ yields $m$. Indeed, there is a unique way to do a planar matching of the stems of $b$.
\end{proof}

\begin{lem}\label{lem:BCtoO}
Applying the opening algorithm on a bicolorable map of genus $g$ endowed with its dual-geodesic orientation yields a well-rooted well-labeled well-oriented unicellular blossoming map of genus $g$ with same weight.
\end{lem}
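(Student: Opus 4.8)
The plan is to follow the route sketched in \cref{rem:BerCha}: rather than analysing the opening of $m$ with its dual-geodesic orientation directly, I transport the question to the dual side via \cref{lem:dualCut}. Put $\hat m:=\switch{\dual m}$; since $m$ is bicolorable, $\dual m$ is bipartite, hence so is $\hat m$, and its genus is again $g$. By definition the dual-geodesic orientation of $m$ is the dual of the geodesic orientation of $\dual m$, and after the reflection (which absorbs the left/right swap created by dualizing) it is precisely the geodesic orientation of $\hat m$. By \cref{lem:dualCut}, the interior edge set of $\cut m$ is then, under the canonical bijection between the edges of $m$ and those of $\hat m$, the complement of the interior edge set of $\cut{\hat m}$; so everything reduces to understanding the opening of a bipartite map carrying its geodesic orientation.

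The key point is that $\unbud{\cut{\hat m}}$ is a spanning tree of $\hat m$ --- the higher-genus analogue of the classical planar fact, and the place where the choice of orientation pays off. Label each vertex of $\hat m$ by its distance to the root; every edge joins vertices whose labels differ by exactly $1$ and is oriented towards the smaller one. Reading \cref{alg:opening}, an unexplored edge is \emph{followed} exactly when the current vertex is its smaller-labelled endpoint, and the walk then steps to the larger-labelled endpoint; hence the label strictly increases along any chain of followed edges, so these contain no cycle and form a forest. Because the geodesic orientation is accessible from the root (whose unique sink it is), the opening walk reaches every corner, so this forest is spanning and connected; thus $\unbud{\cut{\hat m}}$ is a spanning tree of $\hat m$, a unicellular submap of genus $0$.

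Now apply tree--cotree duality on the surface $\mathbb{S}_g$: the cotree of a spanning tree of $\hat m$, namely the dual edges of the non-tree edges, is a connected spanning submap of $\dual{\hat m}=\switch m$ with a single face, an Euler-characteristic count confirming that the edge and face numbers are as required, so this cotree is a unicellular map of genus $g$. Transporting through \cref{lem:dualCut}, and noting that reflecting the surface changes neither the genus nor the number of faces and that replacing each cut edge by a bud--leaf pair leaves every vertex degree unchanged, $\cut m$ is a unicellular blossoming map of genus $g$ with the same weight as $m$. I expect this unicellularity/genus step --- turning ``genus $\le g$'' into ``genus $=g$'' --- to be the main obstacle, and routing it through the dual tree is exactly what makes it painless.

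It remains to check the three ``well-'' properties on $\cut m$, directly. \emph{Well-oriented}: the opening walk traces the boundary of the unique face, and by \cref{alg:opening} an edge becomes an interior edge exactly when it is first met unexplored and \emph{ingoing}, i.e.\ first traversed from head to tail (backward) and, on its second visit, forward; this is \cref{def:wor}. \emph{Well-labeled}: for a stemless map the dual-geodesic orientation is equivalent to a well-labeling $\ell$ as in \cref{def:wlab}, and in fact $\ell$ labels each corner of $m$ by the distance in $\dual m$ of its incident face, so $\ell\ge 0$ with the root corner carrying $\{0,1\}$; cutting an edge into an outgoing bud and an ingoing leaf preserves all local conditions of \cref{def:wlab} at both endpoints and makes the ``equal labels along an edge'' condition vacuous there, so $\ell$ descends to a well-labeling of $\cut m$ whose root bud carries $\{0,1\}$. \emph{Well-rooted}: along the contour tour the height of the contour word equals the label of the current corner, hence it starts and ends at $0$ and, since $\ell\ge 0$, never goes negative; so it is a Dyck word and $\cut m$ is well-rooted by \cref{def:wroot}. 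Altogether $\cut m\in\mathcal{O}_g$ with the same weight as $m$, which is the claim.
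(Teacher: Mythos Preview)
Your proof is correct and follows essentially the same route as the paper's: reduce via \cref{lem:dualCut} to the bipartite dual with its geodesic orientation, observe that the opening there produces a spanning tree, pass back by tree--cotree duality to get a genus-$g$ unicellular map, and then read off the three ``well-'' properties from the algorithm and the distance labeling of $\dual m$. Your write-up is in fact more explicit than the paper's on two points---the acyclicity-by-increasing-labels argument and the identification of contour-word height with the corner label to obtain well-rootedness---while the paper is content to invoke ``a direct analysis'' and the BFS tree; the only wrinkle is your phrase ``accessible from the root (whose unique sink it is)'', which is backwards as stated (a sink reaches nothing), but your intended meaning---that following ingoing edges from the root eventually reaches every vertex---is clear and correct.
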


\begin{proof}
We look at the opening of  a bipartite map endowed with its geodesic orientation. A direct analysis of the algorithm  implies that, in this case, the blossoming map obtained is the rightmost breadth-first-search exploration tree, along with its buds and leaves.

Now let $m$ be a bicolorable map with its dual-geodesic orientation, and $o$ the opening of $m$.
Because of \cref{lem:dualCut}, we know that $\unbud{o}$ is the dual of the complement of the leftmost breadth-first-search exploration tree of $\dual{m}$ starting from the root. In particular, it is a unicellular map of maximum genus.

Since the walk on the corner map of $m$ corresponding to the opening algorithm corresponds to a clockwise tour of the unique face of $o$ starting from the root, the rules of the algorithm naturally imply that $o$ is both well-oriented and well-rooted.
If we label each corner of $o$ with the distance in $\dual{m}$ from its adjacent face to the root face, then $o$ is also well-labeled.
\end{proof}

\begin{lem}
\label{lem:OtoBC}
The closure of a map $o\in\mathcal{O}_g$ yields a bicolorable map $m$ of genus $g$ with same weight and with dual-geodesic orientation.
\end{lem}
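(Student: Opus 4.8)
The plan is to show that the closing algorithm, applied to an object $o\in\mathcal{O}_g$, produces a bicolorable map $m$ of the same genus, with matching vertex degrees, and that the orientation of $m$ inherited from $o$ is precisely the dual-geodesic one. First I would analyze the genus. Since $o$ is a unicellular blossoming map, its contour word is a Dyck path (well-rootedness), and the planar matching of buds with leaves adds $e$ new edges, where $e$ is the number of buds. Each merge identifies two corners; I would track the Euler characteristic carefully: the interior map $\unbud{o}$ has genus $g$ and one face, and adding the $e$ non-crossing closure edges inside that single face splits it into $e+1$ faces while keeping $v$ fixed and raising the edge count by $e$, so by Euler's formula the genus stays $g$. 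The key point here, which I would want to argue cleanly, is that the matched-pair merging never creates a handle — this follows from the non-crossing (well-parenthesized) nature of the matching, exactly as in the planar case, but I need to phrase it topologically: inserting a chord in a disc always separates it. Since $\unbud{o}$ is unicellular of genus $g$ embedded in $\mathbb{S}_g$, gluing these chords in the face yields a map still embedded in $\mathbb{S}_g$, hence $g(m)=g$.

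Next I would address bicolorability and the orientation. The natural approach is to use \cref{cor:lat}: it suffices to check that the orientation of $m$ obtained by orienting each closure edge bud-to-leaf (and keeping the interior orientation of $o$) is a bicolorable orientation with no clockwise face. For the ``no clockwise face'' part, I would use the well-labeling of $o$: the corner labels of $o$ induce a labeling, and the matching rule (each up-step from height $i$ to $i+1$ matched to the first down-step back from $i+1$ to $i$) should guarantee that the two corners glued along a closure edge receive the same label, and more importantly that, around each newly created face, the labels behave monotonically in a way incompatible with the face being clockwise. Concretely, I would show that the label defined on corners of $o$ descends to a well-defined label on corners of $m$ satisfying the three conditions of \cref{def:wlab} with no stems — i.e. a bicolorable orientation — and that each face of $m$, read clockwise, has a strictly dominating corner from which labels only decrease then increase, forcing counterclockwise orientation. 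That is exactly the dual-geodesic characterization from \cref{cor:lat}, and it simultaneously gives bicolorability.

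The weight preservation is immediate: closing only merges stems into edges and never touches vertices or their interior degrees, and a well-labeled orientation makes every vertex balanced, so $v_{2k}(m) = v_{2k}(o)$ for all $k$; this is the ``same weight'' clause. I would also invoke \cref{lem:dualCut} dually if convenient: the closure of $o$ corresponds, via reflection and duality, to completing a breadth-first tree of $\dual m$ into $\dual m$, which pins down the orientation as dual-geodesic; but I expect the direct labeling argument to be cleaner and self-contained.

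The main obstacle I anticipate is the bookkeeping that the corner-labeling of $o$ passes to a \emph{consistent} corner-labeling of $m$ after the merges — in particular verifying that when several closure edges are incident to the same vertex, the local label sequence around that vertex in $m$ still alternates by $\pm 1$ with the higher label to the right of each edge. This requires understanding how the Dyck-path matching interleaves buds and leaves around a single vertex, and checking that the well-labeling condition (higher label to the right of a bud, same label across an edge) is compatible with the height function of the contour word at the moment each stem is read. Once that local compatibility is established, globality follows since $o$ is connected, and then \cref{cor:lat} closes the argument. I would organize the write-up as: (1) genus via Euler's formula and the non-crossing chord argument; (2) construction of the label on $m$ and verification of \cref{def:wlab}'s conditions, hence a bicolorable orientation; (3) no clockwise face, hence dual-geodesic by \cref{cor:lat}; (4) weight preservation.
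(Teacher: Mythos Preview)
Your proposal follows essentially the same route as the paper: genus from the non-crossing chord argument, bicolorability from the descended corner labeling, and dual-geodesic via \cref{cor:lat} by showing no non-root clockwise face. One point to correct in your sketch of step~(3): after closing, all corners of a given face of $m$ carry the \emph{same} label (matched stems are matched precisely because their contour heights agree), so there is no ``strictly dominating corner'' within a face and your monotonicity picture does not apply; the paper's argument instead observes that each non-root face $f$ of label $\ell$ has exactly one boundary closure edge $e(f)$ separating it from a face of label $\ell-1$, and that the bud--leaf order along the contour forces $f$ to lie on the left of $e(f)$, so $f$ is not clockwise.
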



\begin{proof}
Let $o$ be a map of $\mathcal{O}_g$ and $m=Close(o)$.
By construction, during the closing algorithm, no stem remains unmatched, and the created edges are non-crossing. This implies that $m$ is indeed correctly embedded, and has genus $g$ (and not more). 

Since $o$ is well-labeled, the height in the contour word corresponds to the labels of the corners. By consequence, the labels of corners that become adjacent along an edge by the merge of two stems are the same. Therefore, after the closure, each face of $m$ can be naturally labeled by the common label of its corners.

This labeling on faces corresponds to a labeling of dual vertices such that two adjacent vertices have label that differ by $1$ exactly. This implies that the orientation of $\dual{m}$ is a bipartite orientation, and equivalently that the orientation of $m$ is bicolorable. Hence, thanks to \cref{cor:lat}, in order to conclude that $m$ is endowed with its dual-geodesic orientation, we prove the following claim:

\begin{claim}
\label{clm:noCW}
The map $m$ has no clockwise face other than the root face.
\end{claim}
A non-root face $f$ of $m$ with label $l$ is enclosed by a certain number (possibly $0$) of edges of $o$, a certain number (possibly $0$) of edges formed by merging a bud and a leaf with adjacent labels $l$ and $l+1$, and exactly one edge formed by merging a bud and a leaf with adjacent labels $l-1$ and $l$ (see \cref{fig:exNoCW}). This edge is denoted $e(f)$.

By definition, $e(f)$ is formed by the merging of a bud and a leaf, coming in this order in a clockwise tour of the face starting from the root. By consequence, $f$ is on the left of $e(f)$, which implies that $f$ is not clockwise.

\begin{figure}  
  \centering
   \includegraphics[width=.9\linewidth]{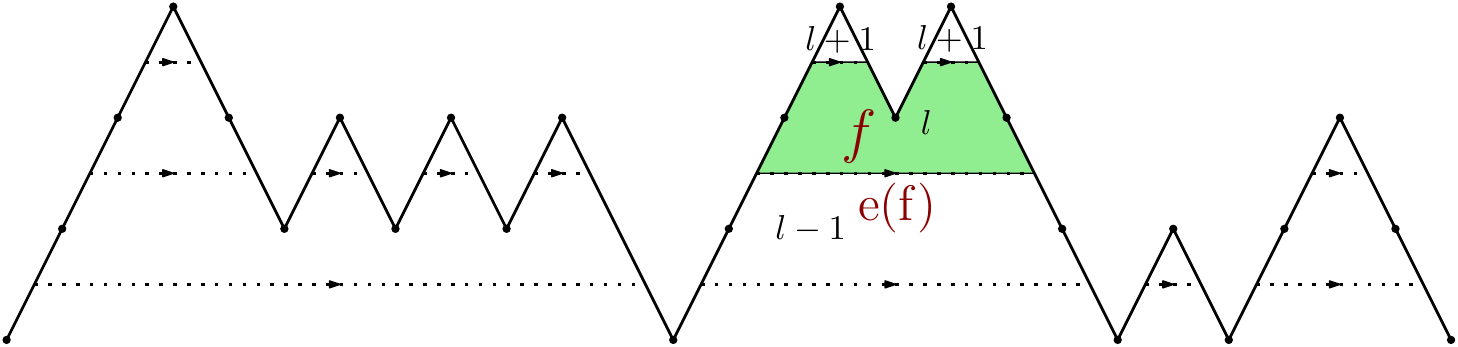}
  \caption{The contour word of the map displayed in \cref{fig:algosOp}. The face $f$, after closure, has $1$ counterclockwise adjacent edge, called $e(f)$.}
  \label{fig:exNoCW}
\end{figure}

\end{proof}

\section{Enumeration and rationality}
\label{sec:analysis}

Although the opening bijection works for any bicolorable map, we now restrict our work to $4$-valent bicolorable maps, keeping in mind that these are in bijection with general maps. The next two sections develop the analysis of the family $\mathcal{O}_g^\times$, so as to obtain a bijective proof of \cref{thm:BenCan91}, through \cref{cor:genToOp}.

\cref{tab:recap} gives a recap of the definitions and relations between some sets of maps, that have already been defined or will be in the upcoming section. It can be used as an outliner of our work up to \cref{subs:analyseAScheme}.

\begin{figure}
\centering
\begin{tabularx}{16cm}{|c|p{7.5cm}|X|}
    \hline
    $\mathcal{M}_g$ & maps of genus $g$ &  \\
    \hline
    $\mathcal{BC}^\times_g$ & $4$-valent bicolorable maps of genus $g$ & Prop.\ref{prop:radMap}: $M_g(z)=BC^\times_g(z)$ \\
    \hline
    \multirow{2}{*}{$\mathcal{O}^\times_g$} & well-rooted well-labeled well-oriented blossoming unicellular maps of genus $g$ & \multirow{2}{*}{Cor.\ref{cor:genToOp}: $M_g(z)=O^\times_g(z)$ }\\
    \hline
    \multirow{2}{*}{$\mathcal{U}_g$} & well-labeled well-oriented blossoming unicellular maps of genus $g$ & \multirow{2}{*}{Thm.\ref{thm:wroot}: $O^\times_g(z)=z^{2g-1}\cdot\frac{2}{z}\int_0^z U_g(t)dt$} \\
    \hline
    \multirow{2}{*}{$\mathcal{T}$} & rooted binary trees oriented toward the root, with $2$ buds on each inner vertex & \multirow{2}{*}{Decomposition: $T(z)=z+3T(z)^2$} \\
    \hline
    \multirow{2}{*}{$\mathcal{P}_g$} & pruned well-labeled well-oriented blossoming unicellular maps of genus $g$ & \multirow{2}{*}{Lem.\ref{lem:pruning}: $U_g(z)=\frac{\partial T}{\partial z}\cdot P_g(T(z))$} \\
    \hline
    \multirow{2}{*}{$\mathcal{R}_g$} & scheme-rooted pruned well-labeled well-oriented blossoming unicellular maps of genus $g$ & \multirow{2}{*}{Lem.\ref{lem:rerootOnScheme}: $P_g(z)\simeq\frac{1}{2g-v^s_4}\cdot\frac{\partial (tR_g(t))}{\partial t}(z)$} \\
    \hline
\end{tabularx}
\caption{A recap of some families of maps, and some relations between them.}\label{tab:recap}
\end{figure}

\subsection{Getting rid of well-rootedness}
\label{subs:wroot}

The analysis of objects such as the maps of $\mathcal{O}_g^\times$ is made difficult by the non-locality of a condition such as well-rootedness. The following theorem enables to go past that condition in the rest of the analysis.

The generating series of $\mathcal{O}_g^\times$ where maps are counted by leaves instead of vertices is denoted ${}^lO_g^\times$. A straight-forward calculation from Euler's formula gives $O_g^\times(z)=z^{2g-1}\cdot {}^lO_g^\times$.
The set of rooted (but not necessarily well-rooted) well-labeled well-oriented $4$-valent unicellular maps, counted by leaves, is denoted $\mathcal{U}$. Recall \cref{def:unroot} for the definition of the unrooted map.

\begin{thm}
\label{thm:wroot}
Let $\undir{m}$ be an unrooted map with $n+1$ rootable stems (which means its representants, the corresponding rooted maps, have $n$ leaves and $n$ buds).

There is a $(n+1)$-to-$2$ application from rooted well-labeled well-oriented $4$-valent unicellular map with unrooted map $\undir{m}$, to well-rooted well-labeled well-oriented $4$-valent unicellular map with unrooted map $\undir{m}$. 
\end{thm}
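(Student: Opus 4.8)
The statement is a counting bijection/correspondence between the $n+1$ rooted representatives of $\undir{m}$ (one per rootable stem) and the well-rooted ones (there are exactly $2$, by the "distinguished well-rootable stem" point of view of Remark~\ref{rem:wroot}, plus possibly the fact that exactly two stems are well-rootable). So I want an $(n+1)$-to-$2$ correspondence. The natural device is a \emph{cyclic lemma} (Dvoretzky--Motzkin / Raney type argument) applied to the contour word. Fix any rooting of $\undir{m}$; its contour word $w$ is a closed walk with $n$ up-steps and $n$ down-steps returning to height $0$. Rooting at a different rootable stem amounts (after the bookkeeping of which stem is the root) to a cyclic shift of $w$ together with a possible sign change of the root step — and here is the subtlety: a \emph{bud} root contributes an up-step while a \emph{leaf} root contributes a down-step, so the $n+1$ rootable stems ($n$ leaves $+$ $1$ current root bud) do \emph{not} all give cyclic shifts of the \emph{same} word. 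The trick suggested by the "\emph{change the root up-step into a down-step}" construction in Definition~\ref{def:wroot}'s neighborhood is: replace the root bud's up-step by a down-step, obtaining a word $w'$ with $n$ up-steps and $n+1$ down-steps, hence total height $-2$, with minimum height $-k$. Now the $n+1$ rootable stems correspond exactly to the $n+1$ down-steps of $w'$, and rooting at the $i$-th of them corresponds to the cyclic rotation of $w'$ that begins just after that down-step.

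First I would make precise the claim that the $n+1$ rootable stems of $\undir{m}$ are in bijection with the $n+1$ down-steps of $w'$, and that rooting "at" a given rootable stem produces a map whose (shifted) contour word is the corresponding cyclic rotation of $w'$ (with that down-step turned back into an up-step when it becomes the root bud). This is the place where Remark~\ref{rem:unroot} (the orientation is recoverable once we know which rootable stem is the root) and the remark that well-rootable stems are invariant under cyclic permutation of the contour word get used; I would cite them to keep the bookkeeping honest. Second, I would invoke the cyclic lemma for the word $w'$ of total height $-2$: among the $n+1$ cyclic rotations of $w'$ starting just after a down-step, exactly $2$ have all partial sums (read from that point) staying $\le -1$ until... — more precisely, I want to count the rotations for which, after turning the first step back into an up-step, the resulting length-$2n$ word is a Dyck word, i.e.\ the rooting is \emph{well-rooted}. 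The standard Raney-type statement gives that a cyclic word of total displacement $-2$ has exactly $2$ cyclic rotations realizing the minimum as a prefix-minimum achieved for the first time at the end, so exactly $2$ of the $n+1$ down-step-initial rotations are "good"; these are precisely the two whose initial down-step is a well-rootable stem, matching the definition of well-rootable (the first step from height $-k+2$ to $-k+1$ and the first from $-k+1$ to $-k$). Third, conclude: the map sending a rooted representative to its well-rooted "relative" (root at the same $\undir{m}$, at the appropriate one of the two well-rootable stems) is the desired $(n+1)$-to-$2$ application — actually one should check it is genuinely $(n+1)$-to-$2$ and not, say, $(n+1)$-to-$1$ onto a multiset; the cyclic lemma gives multiplicity exactly $2$ because the two well-rootable stems are distinct whenever... (the degenerate case $k$ small, or $w'$ having a period dividing its length, needs a remark — but with the extra down-step the total displacement is $-2$, which has only divisors $1,2$, so the only possible coincidence is a global period-$2$ structure, easily ruled out or handled separately).

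I would organize the write-up as: (1) reduce to contour words and set up $w'$; (2) state and prove (or cite) the cyclic lemma in the exact $-2$-displacement form needed, phrased in terms of down-steps; (3) identify the two distinguished rotations with the two well-rootable stems of Definition~\ref{def:wroot}; (4) assemble the $(n+1)$-to-$2$ map and verify the multiplicities.

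\textbf{Main obstacle.} The hard part is not the cyclic lemma itself but the \emph{translation layer}: making rigorous that "rerooting at a rootable stem" corresponds cleanly to a cyclic rotation of the modified word $w'$, including the asymmetry that the old root bud is an up-step while every other rootable stem is a leaf/down-step, and that the well-oriented/well-labeled structure (hence the whole object, by Remarks~\ref{rem:unroot} and~\ref{rem:wroot}) is transported consistently under this rotation. In particular one must be careful that the "$2$" really is $2$ and not $1$ in some degenerate small cases — I expect this to require one short paragraph checking that the total displacement $-2$ precludes any problematic periodicity, so that the two well-rootable stems are always genuinely distinct and the fibre size is exactly $2$.
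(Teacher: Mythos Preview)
Your cyclic-lemma argument correctly pinpoints that exactly two of the $n{+}1$ rootable stems are well-rootable; this is essentially the paper's definition of well-rootable together with the remark that a map is well-rooted iff its root bud is well-rootable. (Your worry about degenerate coincidences is unfounded: the two well-rootable steps lie at different heights of $w'$ and are therefore always distinct.) But the contour word --- and hence any cyclic-lemma manipulation of it --- sees only the \emph{blossoming} orientation. The step you flag as the ``main obstacle'' is a genuine gap, and Remarks~\ref{rem:unroot} and~\ref{rem:wroot} do not close it: together with the comment after \cref{def:wor} they tell you that the blossoming and interior \emph{orientations} are determined by $\undir{m}$ plus a choice of root, but they say nothing about whether the resulting oriented map admits a \emph{well-labeling}. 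Well-labeledness requires that corners separated by an \emph{interior} edge differ by exactly $1$, compatibly with the labels propagated around the face; this depends on how the interior orientation changes under rerooting and is invisible in the contour word. Without establishing that every rooting of $\undir{m}$ at a rootable stem is well-labelable, you cannot assert that the domain has $n{+}1$ elements, and the $(n{+}1)$-to-$2$ count does not follow.

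The paper's proof is devoted precisely to this missing piece. It reduces the theorem to \cref{lem:reroot}, an explicit bijection between pairs (well-rooted map, marked rootable stem) and (rooted map, marked well-rootable stem), realised by a concrete \emph{rerooting algorithm}: merge the root bud and the marked leaf into a temporary edge, splitting the face into sub-faces $f_L$ and $f_R$; subtract $2$ from every corner label in $f_L$; reverse and re-cut that edge; and recompute the interior well-orientation from the new root. The substance of the argument is a case analysis --- including an internal claim characterising exactly which interior edges flip orientation (those separating $f_L$ from $f_R$) --- verifying that the modified labels still satisfy all three rules of \cref{def:wlab}. The $(n{+}1)$-to-$2$ statement then falls out by counting marks on each side of this bijection ($n{+}1$ rootable stems per well-rooted map, $2$ well-rootable stems per rooted map). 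Your plan would need to reproduce essentially this construction; the cyclic lemma by itself is not enough.
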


Recall the definition of a well-rootable stem, given in \cref{def:wroot}. \cref{thm:wroot} directly follows from \cref{lem:reroot}:

\begin{lem}
\label{lem:reroot}
There is a bijection, called rerooting, between rooted well-labeled well-oriented $4$-valent unicellular maps with unrooted map $\undir{m}$, $2n$ stems and a marked well-rootable stem, and well-rooted well-labeled well-oriented $4$-valent unicellular maps with unrooted map $\undir{m}$, $2n$ stems and a marked rootable stem.
\end{lem}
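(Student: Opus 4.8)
The plan is to match up the two sides by comparing their contour words, viewed as lattice walks, under the operation of moving the root. Fix an unrooted map $\undir{m}$ with $2n$ stems. Choosing a root among the rootable stems (a leaf or the root bud) and recording the contour word gives, on each side, a lattice path with $n$ up-steps and $n$ down-steps; changing which rootable stem is the root amounts to a cyclic rotation of this path (here I would use the remark in \cref{def:unroot} and \cref{rem:wroot} that the well-orientation and the set of (well-)rootable stems are intrinsic to $\undir{m}$, so the only data being changed is the choice of root). For the well-rooted side, the contour word is a Dyck word by \cref{def:wroot}; for the general side it is an arbitrary such path. The classical cycle lemma says that among the $n+1$ cyclic rotations that begin at a rootable stem, exactly one is a Dyck path — so na\"ively one would get an $(n+1)$-to-$1$ correspondence. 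The factor $2$ in \cref{thm:wroot}, and the ``marked well-rootable stem'' on the left versus ``marked rootable stem'' on the right in this lemma, is exactly what upgrades this to a genuine bijection: the marking on the right side picks out which rootable stem the original root was, and the left side carries a marked well-rootable stem, of which there are two (the two steps identified in \cref{def:wroot} when the root up-step is flipped to a down-step).

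Concretely, I would define rerooting as follows. Given a rooted (not necessarily well-rooted) map $m$ with a marked well-rootable stem $s$: flip the root bud's up-step to a down-step in the contour word, so the walk runs from $0$ to $-2$ with minimum $-k$; the marked well-rootable stem $s$ is, by \cref{def:wroot}, one of the two steps descending to a new record low at level $-k+1$ or $-k$. Re-root the map at $s$, i.e.\ perform the cyclic rotation of the contour word that starts just after $s$; I claim the resulting contour word is a Dyck word (this is the standard fact that rotating a path to start right after a step reaching its last/first minimum yields a nonnegative path), so the new rooted map is well-rooted. Its original root is now an ordinary rootable stem, which we mark. For the inverse, given a well-rooted map with a marked rootable stem $t$, re-root at $t$ and mark as the new well-rootable stem the former root: one checks the former root bud of the well-rooted map becomes, after flipping its step, one of the two record-low steps, hence well-rootable. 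The content of the lemma is that these two maps are mutually inverse, which reduces to the assertion that the two well-rootable stems of a map are precisely the two rotation points whose associated rotations send that map to a well-rooted one with a prescribed marked rootable stem.

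The main obstacle, and the step I would spend the most care on, is the bookkeeping around the flip of the root up-step to a down-step: \cref{def:wroot} defines well-rootability via this flipped contour word reaching height $-k$, and I need to track precisely how the well-rootable/rootable status of each stem transforms under a cyclic rotation together with this flip. In particular I must verify (i) that re-rooting a general map at one of its two marked well-rootable stems produces a contour word that is genuinely a Dyck word (nonnegative), not merely a path returning to $0$; (ii) that the former root of that general map becomes an honest rootable stem of the new well-rooted map, and every rootable stem arises this way; and (iii) that the two constructions compose to the identity in both directions, which is where the ``$2$'' on the left and the single marking on the right have to balance. Once the cyclic-rotation/record-low dictionary is set up cleanly, each of these is a short verification; assembling them gives the bijection, and \cref{thm:wroot} follows by forgetting the markings and counting: $n+1$ choices of rootable stem on the left collapse, via rerooting, onto well-rooted maps each carrying $2$ choices of marked well-rootable stem, i.e.\ an $(n+1)$-to-$2$ correspondence.
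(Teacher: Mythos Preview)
Your cycle-lemma argument for the contour word is fine and matches what the paper does implicitly (the paper also notes that rerooting induces a cyclic permutation of the contour word). But you have overlooked the main content of the lemma: both sides consist of \emph{well-labeled} maps, and you never explain why rerooting preserves well-labeledness.

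When you move the root to a different rootable stem, the well-orientation of the interior edges changes (it is \emph{not} intrinsic to $\undir{m}$; only the undirected map and the set of rootable stems are --- see \cref{def:unroot}). Since the well-labeling condition requires the higher label to sit on the right of each oriented edge, a labeling that was valid for the old orientation will in general fail for the new one. The paper handles this by an explicit construction: join the old root bud and the marked leaf into a temporary edge, which cuts the unique face into two pieces $f_L$ and $f_R$; subtract $2$ from every corner label in $f_L$; then re-cut the edge. The bulk of the paper's proof is a Claim characterizing exactly which interior edges flip orientation under rerooting (precisely those separating $f_L$ from $f_R$), followed by a case check that the shifted labels remain compatible with the new orientation across every edge and stem. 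Your proposal contains no analogue of this step, and without it you have only a bijection between rooted \emph{well-oriented} maps with the appropriate markings, not well-labeled ones. You should either carry out this label-repair argument or prove directly that admitting a well-labeling is a property of the unrooted map independent of the root --- but that statement is essentially equivalent to what you are trying to prove.
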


\cref{lem:reroot} is illustrated in \cref{fig:reroot}.

\begin{figure}  
  \begin{minipage}[c]{.48\linewidth}
   \includegraphics[width=.9\linewidth,page=2]{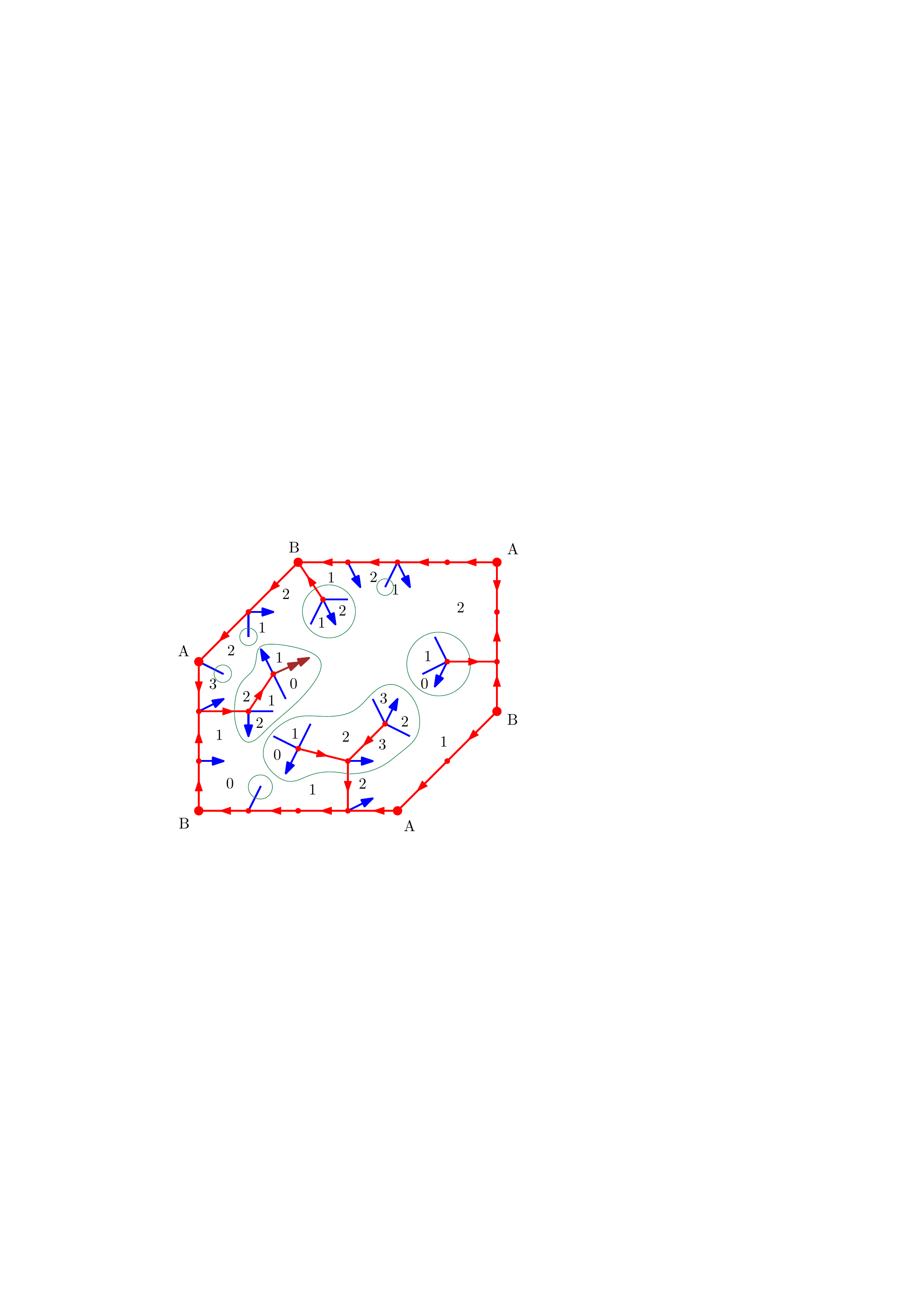}
  \end{minipage} \hfill
  \begin{minipage}[c]{.48\linewidth}
   \includegraphics[width=.9\linewidth,page=5]{longEx4UnicelMap.pdf}
  \end{minipage} 
 \caption{A map of $\mathcal{O}$ with a marked rootable stem (in green) is bijectively mapped (by the rerooting algorithm) to a map of $\mathcal{U}$ with a marked well-rootable stem (in green). In the $2$ maps the opposite sides are identified, so that the maps are of genus $1$, and the $2$ scheme vertices are $A$ and $B$.}
 \label{fig:reroot}
\end{figure}

\begin{proof}
Let $o$ be a well-rooted well-labeled well-oriented $4$-valent unicellular map with unrooted map $\undir{m}$, $2n$ stems and a marked rootable stem. 
 
The \emph{rerooting} algorithm is defined as follows: if the marked stem is the root, we do nothing at all. Otherwise, the root bud and the marked leaf are joined into a single oriented edge. This divides the face into $2$ faces called: $f_L$ and $f_R$, on the left and right of the newly created edge. We reduce all labels of corners of the sub-face $f_L$ by $2$. The orientation of the new edge is reversed, and it is then cut back into a bud and a leaf. The former root is marked, and the former marked leaf becomes the root bud. The interior orientation is then redefined so that the map is well-oriented, which can be easily done by doing a tour of the face.

The rerooted map is denoted $u$. It is by definition a rooted well-oriented $4$-valent unicellular map with unrooted map $\undir{m}$, $2n$ stems and a marked rootable stem. The contour word of $u$ is obtained from that of $o$ by a cyclic permutation, and by consequence, since $o$ is well-rooted, the marked edge of $u$ is well-rootable.

In order for the obtained labeling and orientation to fulfill the last condition of a well-labeled map (recall \cref{def:wlab}), in case the root does not already have labels $01$, all labels will be shifted accordingly in the end. 
However this does not alter the first $2$ conditions, and we therefore proceed to prove, before shifting, that they are satisfied.

Two labels adjacent along an edge were either both unchanged, or both reduced by $2$. 
Two labels separated by a stem which is not marked nor the root are unchanged.
Out of the two labels separated by a marked stem or the root, one is unchanged, and the other is reduced by $2$. However the orientation of the stem is also changed.
In all these cases, the labels remain compatible with the orientation after rerooting.

\begin{adjustwidth}{1.1cm}{0.5cm}
\begin{claim}
An interior edge has opposite orientation before and after rerooting if and only if it separates $f_L$ and $f_R$, in which case, before rerooting, $f_L$ is on its right and $f_R$ on its left.
\end{claim}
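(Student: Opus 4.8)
The plan is to track what the rerooting operation does to the cyclic contour word, and to read off from that word which interior edges get reversed. Recall that an interior edge $e$ of a well-oriented unicellular map is met exactly twice during the clockwise tour of the face, once backward (first visit) and once forward (second visit); equivalently, $e$ corresponds to a pair of positions in the cyclic contour sequence, and its orientation is completely determined by which of the two visits comes first in the tour starting from the root. So the orientation of $e$ changes under rerooting if and only if the cyclic shift of the basepoint (from the old root bud to the new one) moves the basepoint strictly between the two visits of $e$ — that is, if and only if exactly one of the two visits of $e$ lies in the arc of the tour that is ``cut off'' by rerooting.

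First I would make precise what that cut-off arc is. When we temporarily join the root bud and the marked leaf into an oriented edge, the unique face splits into $f_L$ and $f_R$; equivalently, in the contour word the marked leaf's down-step matches the (former) root's up-step, and the portion of the tour strictly between these two matched steps is precisely the boundary tour of one of the two sub-faces, while the complementary portion is the boundary tour of the other. By the rooting conventions of \cref{alg:closure} and \cref{rem:wr} (the root is to the right of the closure edge, closure edges have the root face on their right), the arc that becomes $f_L$'s boundary is exactly the one whose corners get their labels reduced by $2$, and after rerooting — where the new root bud sits at the former marked leaf — this is exactly the arc whose basepoint position has ``flipped'' relative to the rest of the tour. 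Hence an interior edge is traversed in the opposite cyclic order before and after rerooting if and only if exactly one of its two visits lies on the $f_L$-arc, which is exactly the statement that $e$ separates $f_L$ from $f_R$.

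Next I would pin down the orientation refinement in that case. If $e$ separates $f_L$ and $f_R$, then in the original tour (rooted at the old bud) one of its two visits lies in the $f_L$-arc and the other in the $f_R$-arc; since the old root lies on the $f_R$-side (the root bud is an endpoint of the $f_R$-boundary after the temporary merge), the $f_R$-visit comes first. The well-orientation rule says the first visit is backward: walking the face clockwise, ``first visit = backward'' means the face being toured is on the right of $e$ at that first visit. Therefore $f_R$ is on the right of $e$ and $f_L$ on its left, before rerooting — which is the second assertion of the claim. (After rerooting the roles swap, consistently with the fact that $e$ is reversed and that we re-run the well-orientation tour from the new root.)

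The main obstacle I expect is bookkeeping the cyclic/basepoint subtleties cleanly: $o$ is well-rooted, so its contour word is a Dyck word and the ``temporary merge'' matching of the old root up-step is forced and non-cyclic, but after rerooting the contour word of $u$ is only a cyclic rotation of a Dyck word (it is well-\emph{rootable} but not necessarily well-rooted), so I must argue about cyclic sequences throughout rather than linear ones, and be careful that ``the arc between the two matched steps'' and ``the $f_L$-face'' really coincide with the set of corners whose labels drop by $2$. Once the identification arc $=$ $f_L$-boundary is nailed down, the rest — that reversal $\Leftrightarrow$ exactly one visit in the arc $\Leftrightarrow$ $e$ separates $f_L,f_R$, plus the left/right refinement from the well-orientation rule — is immediate.
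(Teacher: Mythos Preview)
Your approach is essentially the paper's: both proofs rest on the single observation that in a well-oriented unicellular map the orientation of every interior edge is determined by which of its two sides is visited first in the clockwise face-tour from the root, and that rerooting is nothing but a cyclic shift of that tour's basepoint. Hence an edge flips iff its two sides lie in different arcs of the tour (equivalently, iff it separates $f_L$ from $f_R$). The paper says this in two sentences; you say it with contour-word language, but it is the same argument.

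The one place to revisit is your left/right refinement. You write ``the old root lies on the $f_R$-side \dots\ so the $f_R$-visit comes first''. That step is not justified and in fact points the wrong way: in the clockwise contour (face on the right) one goes counterclockwise around the root vertex, so the corner reached \emph{immediately after} the root bud is the one on the \emph{left} of the bud --- hence in $f_L$. Thus the first arc traced is the $f_L$-boundary, and for a separating edge it is the $f_L$-side that is visited first (backward). Your conclusion ``$f_R$ is on the right of $e$ and $f_L$ on its left'' therefore contradicts the statement you are asked to prove. Redo this half of the argument carefully, tracking exactly which corner the tour enters after the root bud and what ``first visit $=$ backward'' says about the side of $e$ carrying the higher label.
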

\begin{proof}
Because we deal with well-oriented maps, the orientation of the interior edges in the map before and after rerooting are determined by the order of apparition of their sides in a tour of the face starting from the root.

If both sides of the edge are adjacent to the same sub-face, they appear in the same order in a clockwise tour of the face starting from the root before and after rerooting, which implies that the orientation of the edge is unchanged by the rerooting.

If the two sides of the edge are not adjacent to the same sub-face, then the well-orientedness of the map before and after rerooting implies that the edge is counter-clockwise around $f_L$ and clockwise around $f_R$ before rerooting, whereas it is clockwise around $f_L$ and counter-clockwise around $f_R$ after rerooting. 
\end{proof}
\end{adjustwidth}

Now we consider two adjacent corners separated by an edge $e$, and check that their labels are compatible with the orientation of $e$. 

If the two sides of $e$ are both adjacent to $f_R$, the labels and orientation were unchanged, so they remain compatible. If the two sides of $e$ are both adjacent to $f_L$, the orientation was unchanged whereas the label were both reduced by $2$, so they remain compatible.

If $e$ has one side on each sub-face, then before rerooting, the label in $f_L$ was higher than the other one by $1$, whereas after rerooting it is reduced by $2$, and is hence smaller than the other (unchanged) label by $1$. Since both $o$ and $u$ are well-oriented, this is compatible with the change of orientation of the edge.

A very similar proof can be made for the inverse bijection.
\end{proof}

The considered families of maps can be restricted by unrooted map, so that for instance ${}^l\mathcal{O}_g^{\times \undir{m}}$ is the subset of ${}^l\mathcal{O}$ whose unrooted map is $\undir{m}$, where $m$ has to be $4$-valent and has genus $g$.

\begin{cor}\label{cor:wroot}
\cref{thm:wroot} yields: \[ {}^lO_{\undir{m}}^{\times}(t)=\frac{2}{t}\int\limits_{0}^{t}U_{\undir{m}}(z)dz. \]
\end{cor}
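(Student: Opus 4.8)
The plan is to translate the bijective statement of \cref{thm:wroot} into the claimed identity of generating series, exploiting the fact that, once the unrooted map $\undir{m}$ is fixed, both series involved collapse to a single monomial in the leaf-counting variable.

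First I would record that every rooted (resp.\ well-rooted) well-labeled well-oriented $4$-valent unicellular map whose unrooted map is $\undir{m}$ has exactly $n$ leaves, where $n+1$ is the number of rootable stems of $\undir{m}$ --- this is precisely the parenthetical in the statement of \cref{thm:wroot}. Writing $a$ for the number of such rooted maps and $b$ for the number of such well-rooted maps, this gives $U_{\undir{m}}(t)=a\,t^{n}$ and ${}^lO_{\undir{m}}^{\times}(t)=b\,t^{n}$.

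Next I would extract the numerical relation $2a=(n+1)\,b$ from the $(n+1)$-to-$2$ application of \cref{thm:wroot}, or directly from \cref{lem:reroot}. Each of the $a$ rooted maps carries exactly $2$ well-rootable stems (by the definition recalled in \cref{def:wroot}), while each of the $b$ well-rooted maps carries exactly $n+1$ rootable stems, namely its $n$ leaves together with its root bud. The rerooting bijection of \cref{lem:reroot} then identifies the set of pairs (rooted map with unrooted map $\undir{m}$, marked well-rootable stem), of cardinality $2a$, with the set of pairs (well-rooted map with unrooted map $\undir{m}$, marked rootable stem), of cardinality $(n+1)b$; hence $2a=(n+1)\,b$, i.e.\ $b=\frac{2a}{n+1}$.

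Finally I would just compute $\frac{2}{t}\int_{0}^{t}U_{\undir{m}}(z)\,dz=\frac{2}{t}\int_{0}^{t}a\,z^{n}\,dz=\frac{2a}{n+1}\,t^{n}=b\,t^{n}={}^lO_{\undir{m}}^{\times}(t)$, which is the claim. I do not expect a genuine obstacle: the only point deserving an explicit word is that the operator $f(t)\mapsto\frac{2}{t}\int_{0}^{t}f(z)\,dz$ sends $z^{n}$ to $\frac{2}{n+1}z^{n}$, so it mirrors exactly the $(n+1)$-to-$2$ counting coming from the bijection --- which is what legitimizes proving the identity one unrooted map $\undir{m}$ at a time.
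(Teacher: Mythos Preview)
Your argument is correct and is exactly the intended unpacking of the corollary: the paper gives no proof beyond the word ``yields'', and your translation of the $(n+1)$-to-$2$ correspondence (via \cref{lem:reroot}) into the identity $b=\frac{2a}{n+1}$, followed by the observation that $\frac{2}{t}\int_0^t z^n\,dz=\frac{2}{n+1}t^n$, is precisely what is meant. One cosmetic point: the count of exactly two well-rootable stems comes from the definition of well-rootable stems just after \cref{def:wroot}, not from \cref{def:wroot} itself.
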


\subsection{Reducing a unicellular map to a labeled scheme}

The framework applied in this subsection has become classical when studying unicellular maps. In particular, it is developed by Chapuy, Marcus and Schaeffer in \cite{ChMaSc09}.

\begin{defn}[extended scheme]
The \emph{extended scheme} of a unicellular blossoming map $u$ is the unicellular map of genus $g$ obtained by iteratively removing from the interior map $\unbud{u}$ all vertices of interior degree $1$.
\end{defn}

A unicellular map $u$ is composed of an extended scheme upon which are attached some stems and treelike parts. These treelike parts, with their leaves, are binary trees, oriented towards the root of the map. Furthermore, on each interior vertex of these trees is attached a bud. The set of such trees, counted by leaves, is denoted $\mathcal{T}$. Its generating series satisfies the recurrence relation $T(z)=z+3T(z)^2$. The generating series of such trees with a marked leaf (or equivalently doubly rooted) is $z\cdot\frac{\partial T}{\partial z}(z)$.

\medskip

The \emph{pruning} procedure is defined as follows: each treelike part is replaced by a rootable stem: a root bud if the tree contains the root, a leaf otherwise (see \cref{fig:prunReroot} left and middle). 
The image of $\mathcal{U}$ by the pruning procedure, counted by leaves, is denoted $\mathcal{P}$.

\begin{lem}
\label{lem:pruning}
The pruning algorithm yields: \[ U(z)=\frac{\partial T}{\partial z}\cdot P(T(z)).\]
\end{lem}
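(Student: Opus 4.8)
The plan is to analyze how the pruning procedure decomposes a map of $\mathcal{U}$ into a pruned map together with the treelike parts that were removed, and to translate this decomposition into the claimed identity on generating series. First I would make precise what pruning does to a given $u \in \mathcal{U}$: the extended scheme of $u$ is fixed, and every vertex of $u$ not belonging to the extended scheme lies in one of the treelike parts hanging off the scheme vertices. As observed just before the lemma, each such treelike part, together with its buds and leaves, is an element of $\mathcal{T}$ (a binary tree oriented toward the root, with a bud on each inner vertex), and pruning replaces each treelike part by a single rootable stem — a bud if that tree contained the global root, a leaf otherwise.

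The key combinatorial point is that reconstructing an element of $\mathcal{U}$ from its pruned image $p \in \mathcal{P}$ amounts to substituting, at each rootable stem of $p$, a tree of $\mathcal{T}$ hung at that location (with the orientation toward the root automatically consistent since the stem already records which direction is "toward the root"). A leaf-stem of $p$ is replaced by a tree of $\mathcal{T}$ counted by leaves, contributing a factor $T(z)$ per leaf; there is a subtlety at the stem carrying the global root, since the root must be a genuine leaf of the inserted tree rather than any leaf — this is exactly a \emph{marked-leaf} (doubly rooted) tree, whose series is $z\cdot\frac{\partial T}{\partial z}(z)$, as recalled in the excerpt. Concretely: if $p$ has $k$ rootable stems, one of which is the root, then the maps of $\mathcal{U}$ pruning to $p$ are obtained by choosing $k-1$ arbitrary trees of $\mathcal{T}$ and one marked-leaf tree of $\mathcal{T}$, so their contribution to $U(z)$ is $z\,\frac{\partial T}{\partial z}(z)\cdot T(z)^{k-1}$. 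Summing over $p \in \mathcal{P}$, and noting that $P$ counted by leaves records exactly the number of rootable stems via $P(z) = \sum_{p} z^{(\#\text{rootable stems of }p)}$, this sum is $\frac{1}{z}\cdot z\,\frac{\partial T}{\partial z}(z)\cdot \sum_p T(z)^{\#\text{stems}} $; I would match the bookkeeping so that it collapses to $\frac{\partial T}{\partial z}(z)\cdot P(T(z))$, with the single derivative factor coming from the one marked tree and the composition $P(T(z))$ from substituting unmarked trees of $\mathcal{T}$ into each of the remaining stems.

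The step I expect to be the main obstacle is the careful bookkeeping of the "marked leaf" at the root and making sure the exponent of $z$ on each side matches — in particular verifying that the number of leaves of $u$ equals the total number of leaves contributed by all inserted trees (the pruned scheme itself contributes none, since all its stems are exactly the insertion sites), and that exactly one insertion site is the distinguished root-bearing one regardless of the structure of $p$. I would also need to check the orientation and well-orientedness are preserved under the decomposition: since all treelike parts are oriented toward the root and well-orientedness of a unicellular map is unique and determined by a tour of the face (as noted after \cref{def:wor}), inserting or deleting a tree oriented toward the root at a stem does not disturb well-orientedness elsewhere, and the labels extend uniquely as well. Once these consistency checks are in place, the generating-function identity $U(z) = \frac{\partial T}{\partial z}\cdot P(T(z))$ follows by the substitution/product rule for labeled combinatorial decompositions.
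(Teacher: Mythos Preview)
Your approach is exactly the paper's: each leaf of $p$ is replaced by a tree of $\mathcal{T}$, and the root bud of $p$ is replaced by a doubly-rooted tree whose marked leaf is then turned into the new root bud. The bookkeeping slip you anticipated is that $\mathcal{P}$ is counted by \emph{leaves}, so $P(z)=\sum_{p} z^{\#\text{leaves of }p}$, not $z^{\#\text{rootable stems}}$; correspondingly the root-bud tree contributes $\frac{\partial T}{\partial z}$ (the marked leaf becomes a bud, so one leaf is lost), and with these two fixes the stray factors of $z$ vanish and the identity drops out directly.
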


\begin{proof}
In order to recover a map of $\mathcal{U}$ from a pruned map $p$, we need to replace each leaf of $p$ by a tree. Then the root bud of $p$ (which has weight $0$) is replaced by a tree with a marked leaf. The marked leaf is replaced by a root bud (decreasing the weight by $1$), and the tree is oriented toward this new root. The equation follows.
\end{proof}

\begin{figure}  
  \begin{minipage}[c]{.32\linewidth}
   \includegraphics[width=.9\linewidth,page=6]{longEx4UnicelMap.pdf}
  \end{minipage} \hfill
  \begin{minipage}[c]{.32\linewidth}
   \includegraphics[width=.9\linewidth,page=2]{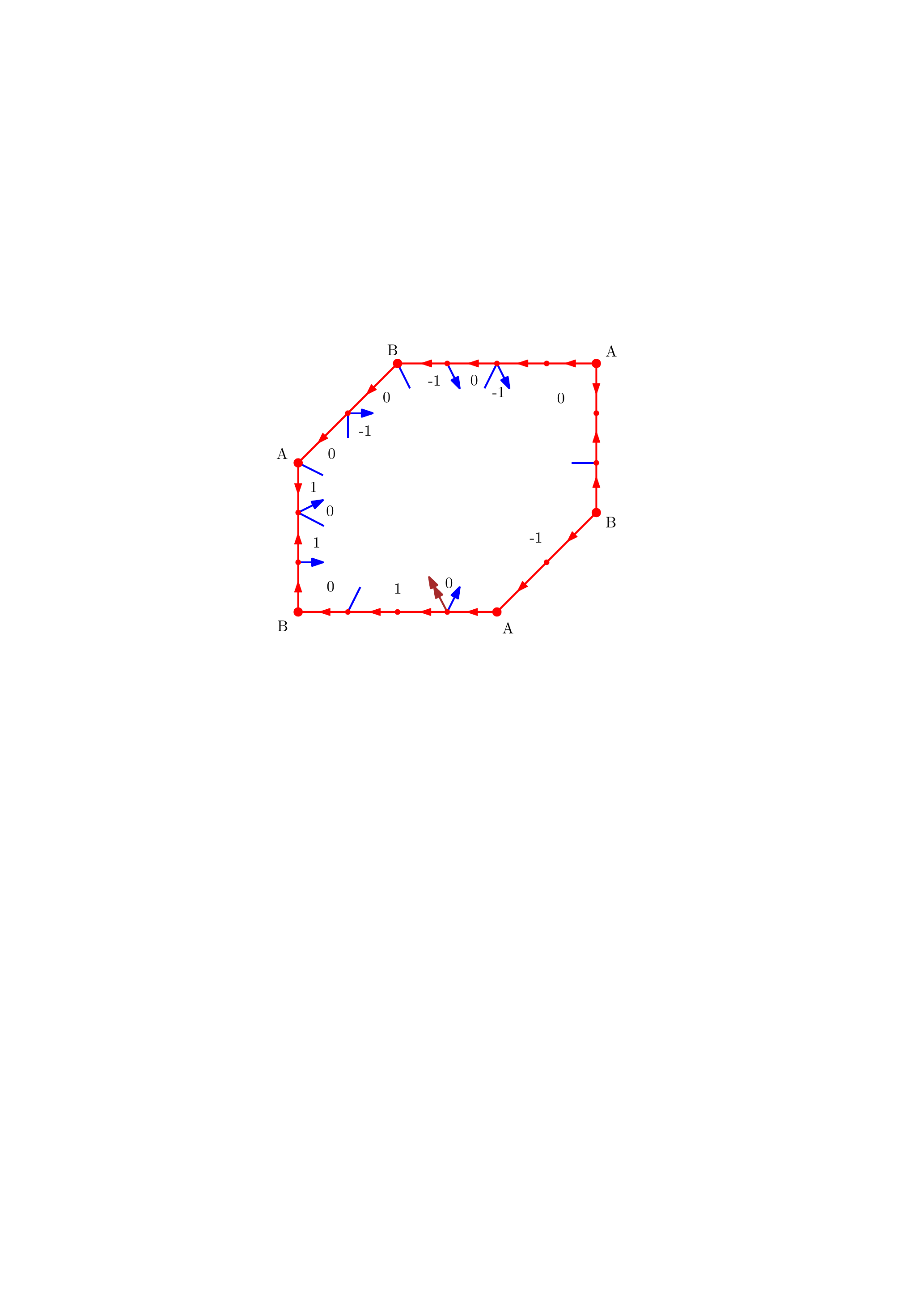}
  \end{minipage} \hfill
  \begin{minipage}[c]{.32\linewidth}
   \includegraphics[width=.9\linewidth]{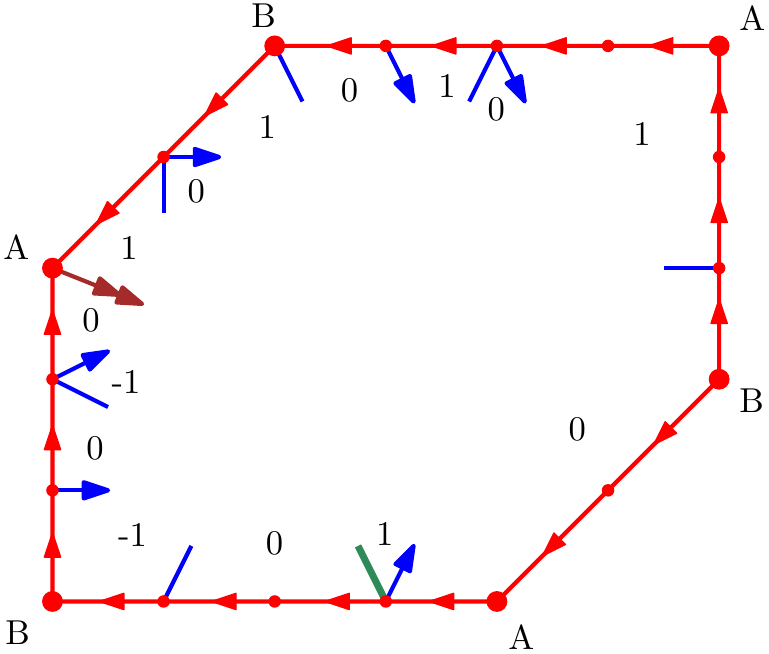}
  \end{minipage}
 \caption{On these maps, the three pairs of opposite sides are identified, so that they only have $2$ scheme vertices each, denoted $A$ and $B$. A map of $\mathcal{U}$ (left), whose treelike parts are encompassed in green, is pruned (middle). One of its scheme rootable stems is marked (in green), and the map is then rerooted (right) on this marked stem, while marking (in green) the former root stem.}
 \label{fig:prunReroot}
\end{figure}

All vertices of the pruned map are of interior degree $2$, $3$ or $4$. We call $v^s_2$, $v^s_3$, and $v^s_4$ the number of such vertices. 
When the notation is ambiguous, we specify which map is concerned by writing $v^s_2(m)$ for example.
A quick calculation based on Euler formula gives: ${v^s_3+2v^s_4=4g-2}$. There are thus a bounded number of vertices of degree $3$ or $4$, the other ones being of degree $2$. 
Vertices of interior degree at least $3$ in the pruned map are called \emph{scheme vertices}, and a stem (resp.~bud, leaf) attached on a scheme vertex (which is then necessarily of interior degree $3$ since the map is $4$-valent) is called a \emph{scheme stem} (resp. \emph{scheme bud}, \emph{scheme leaf}).
After pruning, a sequence of adjacent vertices (of interior degree $2$) between two scheme vertices is called a \emph{branch}. 

\begin{lem}
Let $p\in\mathcal{P}$. 
Out of its $v^s_3=4g-2v^s_4-2$ scheme stems, $p$ has exactly $2g-v^s_4$ rootable scheme stems. In particular $v^s_3>0$.
\end{lem}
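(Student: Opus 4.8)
The statement has two quantities to control: the total number of scheme stems, which the lemma itself records as $v^s_3 = 4g - 2v^s_4 - 2$, and the number of \emph{rootable} scheme stems, claimed to be $2g - v^s_4$. The first is just the Euler-formula relation ${v^s_3 + 2v^s_4 = 4g-2}$ stated just before the lemma, so I would begin by recalling that identity and noting $v^s_3 = 4g - 2v^s_4 - 2$. The substance is the count of rootable scheme stems. Recall from \cref{def:wroot} that a stem is rootable iff it is a leaf or the root bud; since a pruned map in $\mathcal{P}$ has a single root bud (coming from the treelike part containing the original root), and all scheme stems attached to a scheme vertex are buds or leaves, the number of rootable scheme stems equals (number of scheme leaves) $+\ \varepsilon$, where $\varepsilon\in\{0,1\}$ records whether the root bud happens to be a scheme stem.

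**Key steps.** First I would invoke well-orientedness to pin down, for each scheme vertex (which is $4$-valent with interior degree $3$, hence carries exactly one scheme stem), whether that stem is a bud or a leaf. A scheme vertex $w$ has three interior half-edges and one stem; the well-orientation (\cref{def:wor}) of the unicellular map forces the indegree/outdegree pattern to be Eulerian (the map is well-labeled, \cref{def:wlab}, so every vertex has equal in- and out-degree $2$). With interior degree $3$ the three interior edges contribute an odd in/out split, so the stem must be exactly the half-edge restoring the balance: concretely, if among the three interior edges there are $2$ ingoing and $1$ outgoing, the stem is a bud (outgoing), and if $1$ ingoing and $2$ outgoing, the stem is a leaf (ingoing). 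Thus \emph{scheme stem = leaf} $\iff$ the scheme vertex has interior outdegree $2$ (two interior edges pointing out). Second, I would sum the outdegree balance over all scheme vertices, or equivalently run the well-orientation tour argument: in a tour of the unique face, each interior edge is traversed once backward and once forward, and each scheme vertex with its branches organizes the face into a tree-of-branches structure on the scheme. The cleanest route is: the interior map of the scheme has $v^s_3 + v^s_4$ vertices and, by Euler applied to the scheme as a unicellular map of genus $g$, a controlled edge count; comparing indegrees and outdegrees vertex by vertex and using that branches (degree-$2$ vertices) are "transparent" (indegree $=$ outdegree $=1$ forces a consistent flow along a branch) lets me express the number of scheme leaves purely in terms of $g$ and $v^s_4$.

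**The main computation.** Let $L$ be the number of scheme leaves and $B$ the number of scheme buds, so $L + B = v^s_3 = 4g - 2v^s_4 - 2$. I claim $L - B = $ (a fixed constant independent of the map, which I expect to be $-2$ after accounting for the root), giving $L = 2g - v^s_4 - 1$; then the rootable scheme stems number $L + \varepsilon = 2g - v^s_4 - 1 + \varepsilon$, and a short case analysis of where the root bud sits (scheme stem vs.\ interior-degree-$1$ treelike root, which after pruning becomes the root bud \emph{not} attached to a scheme vertex) shows $\varepsilon$ is forced to be $1$ precisely when needed, yielding $2g - v^s_4$. To get $L - B$ I would total, over all vertices of the pruned map, the signed quantity $(\text{interior outdegree}) - (\text{interior indegree})$: this telescopes to $0$ since every interior edge contributes $+1$ and $-1$. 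Degree-$2$ (branch) vertices contribute $0$ each. A scheme bud-vertex (interior outdegree $1$, indegree $2$) contributes $-1$; a scheme leaf-vertex contributes $+1$; a degree-$4$ scheme vertex (interior degree $4$, Eulerian, outdegree $2$ indegree $2$) contributes $0$; and the root vertex, carrying the root bud as an "outgoing" half-edge external to the interior map, shifts the balance by $+1$. Setting the total to $0$ gives $L - B + 1 = 0$, i.e.\ $L = B - 1$, hence $2L = v^s_3 - 1 = 4g - 2v^s_4 - 3$ — which is odd, signalling I have mis-tracked exactly one root-related unit; redoing the bookkeeping carefully (the root bud and the fact that $o$ is well-\emph{rooted} so the contour word is a Dyck path, pinning the root vertex's contribution) will shift this to $L = 2g - v^s_4 - 1$ and $L + 1 = 2g - v^s_4$ rootable scheme stems. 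Finally $v^s_3 = L + B = 2L + 1 = 4g - 2v^s_4 - 1$? — no: with $L = 2g-v^s_4-1$ and $B = 2g-v^s_4-1$ one gets $v^s_3 = 4g-2v^s_4-2$ only if $L = B$; so in fact the correct split is $L = 2g - v^s_4 - 1$, $B = 2g - v^s_4 - 1$ is wrong and the true relation must be $L = B$ up to the root adjustment that makes one of the $B$ buds rootable. This is exactly the delicate point.

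**Main obstacle.** The genuine difficulty is the precise root bookkeeping: disentangling whether the root bud of the pruned map is a scheme stem or a treelike-root stem, and tracking the single $\pm 1$ this introduces into the parity/count — the rest (Euler's formula, the Eulerian in/out balance at each vertex, transparency of branches) is routine. I would resolve it by working from the contour word: since $p$ arises from a well-rooted map, its contour word is a Dyck path, and the scheme stems correspond to a distinguished sub-collection of up/down steps; counting rootable stems among them reduces to a height-function argument on this Dyck path restricted to scheme steps, which cleanly separates the $+\varepsilon$ term and gives $2g - v^s_4$. That $v^s_3 = 4g - 2v^s_4 - 2 > 0$, and hence $2g - v^s_4 \geq 1 > 0$, then follows since $v^s_3 + 2v^s_4 = 4g-2 \geq 2$ for $g \geq 1$ forces $v^s_3 \geq 1$ (the genus is positive whenever the scheme is nonempty).
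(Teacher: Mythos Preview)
Your approach---balance interior in/out degrees, note that branch vertices are transparent, and conclude via the Eulerian condition---is exactly the paper's strategy. The gap is that you never cleanly execute the case split on the location of the root, and that split is the whole point. Concretely: if the root bud sits on a scheme vertex (interior degree~$3$), then \emph{every} interior-degree-$2$ vertex has interior indegree~$1$ and outdegree~$1$, so summing interior out minus interior in over scheme vertices gives~$0$, hence scheme buds $=$ scheme leaves $= \tfrac{v^s_3}{2} = 2g - v^s_4 - 1$, and rootable scheme stems $=$ leaves $+$ root bud $= 2g - v^s_4$. If instead the root bud sits on an interior-degree-$2$ vertex, that vertex is special: by well-orientedness it has interior indegree~$2$ and outdegree~$0$ (not~$1$ and~$1$), which shifts the balance by~$2$, giving $L - B = 2$, hence $L = 2g - v^s_4$, $B = 2g - v^s_4 - 2$, and rootable scheme stems $= L = 2g - v^s_4$ (the root bud is not a scheme stem here). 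Your single computation collapses these two cases and that is why you keep getting off-by-one contradictions.

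Two further errors. First, you write ``since $p$ arises from a well-rooted map, its contour word is a Dyck path'': this is false. Maps in $\mathcal{P}$ are obtained by pruning $\mathcal{U}$, and $\mathcal{U}$ consists of rooted but \emph{not necessarily well-rooted} maps (well-rootedness was discharged back in \cref{thm:wroot}). Your proposed Dyck-path resolution therefore rests on a wrong premise. Second, your argument for $v^s_3 > 0$ is invalid: from $v^s_3 + 2v^s_4 = 4g-2$ alone one cannot conclude $v^s_3 \geq 1$ (take $v^s_4 = 2g-1$, $v^s_3 = 0$). The paper deduces $v^s_3 > 0$ \emph{a posteriori}: once you know there are $2g - v^s_4$ rootable scheme stems, this number must be nonnegative and at most $v^s_3$, and combining with Euler rules out $v^s_3 = 0$.
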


\begin{proof}
\begin{itemize}
\item Suppose the map $p$ is scheme-rooted. Since $p$ is well-oriented, all edges of a branch are oriented the same way, which implies that all vertices of interior degree $2$ have interior out- and in-degree equal to $1$. 

By consequence, the sums of interior in- and out-degrees of scheme vertices are equal. Since the map is Eulerian, the sum of blossoming in- and out-degrees of scheme vertices are equal. Hence there are as many scheme buds as scheme leaves, that is $2g-v^s_4-1$ each.

\item Conversely, if the root is on a vertex of interior degree $2$, the root-vertex has interior in-degree $2$ and interior out-degree $0$, whereas all other vertices of interior degree $2$ have interior out- and in-degree equal to $1$. 

By consequence, the sum of interior out-degrees of scheme vertices is equal to the sum of interior in-degrees of scheme vertices, plus $2$. Since the map is Eulerian, the sum of blossoming in-degrees of scheme vertices is equal to the sum of blossoming out-degrees of scheme vertices, plus $2$. Hence there are $2g-v^s_4$ scheme leaves and $2g-v^s_4-2$ scheme buds. 
\end{itemize}

In any case there is a positive number of scheme stems, which implies that $v^s_3>0$.
\end{proof}

We now proceed to reroot the pruned map on a scheme stem. We choose a rootable scheme stem among the $2g-v^s_4$ possible choices and mark it. 
The \emph{rerooting-on-the-scheme} algorithm (see \cref{fig:prunReroot} middle and right), is the same as the rerooting described in the proof of \cref{lem:reroot}. 

The subset of $\mathcal{P}$ composed of \emph{scheme-rooted} maps is denoted $\mathcal{R}$.
We call $\mathcal{P}^{\undir{e}}$ (resp. $\mathcal{R}^{\undir{e}}$) the subset of maps of $\mathcal{P}$ (resp. $\mathcal{R}$) that have $\undir{e}$ as an unrooted extended scheme. 

\begin{lem}\label{lem:rerootOnScheme}
The rerooting-on-the-scheme algorithm yields: \[ P_{\undir{e}}(z)=\frac{1}{2g-v^s_4(\undir{e})}\cdot\frac{\partial (tR_{\undir{e}}(t))}{\partial t}(z).\]
\end{lem}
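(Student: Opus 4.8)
The statement to prove is Lemma~\ref{lem:rerootOnScheme}, which asserts the identity
\[
P_{\undir{e}}(z)=\frac{1}{2g-v^s_4(\undir{e})}\cdot\frac{\partial (tR_{\undir{e}}(t))}{\partial t}(z),
\]
and it should follow by the same kind of rerooting-and-counting bookkeeping already used to prove Lemma~\ref{lem:reroot}/Theorem~\ref{thm:wroot} and Corollary~\ref{cor:wroot}. My plan is to apply the rerooting-on-the-scheme algorithm to build a correspondence between maps of $\mathcal{P}^{\undir{e}}$ equipped with a distinguished rootable scheme stem and scheme-rooted maps of $\mathcal{R}^{\undir{e}}$ equipped with a distinguished rootable stem (which becomes the former root after rerooting). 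The point of the lemma just above (that a scheme-rooted pruned map has exactly $2g-v^s_4$ rootable scheme stems) is what fixes the numerical factor $\tfrac{1}{2g-v^s_4}$: a map in $\mathcal{P}^{\undir{e}}$, which has one root stem, is counted $2g-v^s_4$ times on the "marked scheme stem" side because it has that many rootable scheme stems to mark, while a map in $\mathcal{R}^{\undir{e}}$ is counted once for each of its rootable stems on the "marked rootable stem" side, which is exactly the number of leaves plus one — and this "plus one" and "one variable marked" is what produces $\partial(tR_{\undir{e}}(t))/\partial t$.

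First I would set up the generating-function translation of the two marking operations. On the one hand, $\sum_{p\in\mathcal{P}^{\undir{e}}}(\#\{\text{rootable scheme stems of }p\})\,z^{|p|}$ equals $(2g-v^s_4(\undir{e}))\,P_{\undir{e}}(z)$, since by the previous lemma every map in $\mathcal{P}^{\undir{e}}$ — being, after its own rerooting onto a scheme stem, scheme-rooted — has exactly $2g-v^s_4(\undir{e})$ rootable scheme stems (this count is an invariant of the unrooted scheme, not of the root). On the other hand, for a scheme-rooted map $r\in\mathcal{R}^{\undir{e}}$ with $n$ leaves the number of rootable stems is $n+1$ (the $n$ leaves together with the root bud, exactly as in Theorem~\ref{thm:wroot}), so $\sum_{r\in\mathcal{R}^{\undir{e}}}(n(r)+1)\,z^{|r|}=\sum_r (|r|+1)z^{|r|}=\frac{\partial}{\partial t}\big(tR_{\undir{e}}(t)\big)$ evaluated at $z$, since $|r|=n(r)$ counts leaves. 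The lemma then reduces to showing that the rerooting-on-the-scheme algorithm is a bijection between the two marked families, whence the two marked generating functions coincide and the identity follows by dividing by $2g-v^s_4(\undir{e})$.

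Then I would verify that the rerooting-on-the-scheme map is indeed a bijection preserving $\undir{e}$ and the number of leaves. For well-definedness: the algorithm is literally the rerooting of the proof of Lemma~\ref{lem:reroot} applied with the marked stem being a rootable scheme stem; that proof already shows the output is a well-labeled well-oriented $4$-valent unicellular map with the same unrooted (extended) scheme and the same $2n$ stems, the former root becoming a marked rootable stem and the marked scheme stem becoming the new root bud — and since the new root sits on a scheme vertex, the output lies in $\mathcal{R}^{\undir{e}}$. For injectivity/surjectivity one runs the inverse rerooting (join new root bud with the marked leaf, shift labels on one sub-face by $2$, reverse, re-cut, re-orient by a face tour), exactly as in Lemma~\ref{lem:reroot}; the only extra remark needed is that a marked rootable stem of a scheme-rooted map becomes, after inverse rerooting, a rootable scheme stem of the resulting $\mathcal{P}$-map — which holds because the inverse rerooting does not move stems between scheme vertices and branch vertices and does not change the set of rootable stems up to the root/marked swap.

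The main obstacle — the only genuinely non-routine point — is the combinatorial identity that every map in $\mathcal{P}^{\undir{e}}$ has exactly $2g-v^s_4(\undir{e})$ rootable scheme stems regardless of where its own root lies. The preceding lemma establishes this count only for scheme-rooted maps and for maps rooted on a degree-$2$ vertex via an Eulerian in/out-degree balance along branches; I would need to note that the count of rootable scheme stems is genuinely a function of $\undir{e}$ alone. This follows because the well-orientation of a branch forces all its edges the same way, so the in/out-degree contribution of a branch to its two endpoint scheme vertices is symmetric, independently of the global root position; hence the balance argument of the previous lemma gives $2g-v^s_4-1$ scheme buds and $2g-v^s_4-1$ scheme leaves when the root is off the scheme branches' interior (the scheme-rooted case) and the shifted counts $2g-v^s_4-2$ buds, $2g-v^s_4$ leaves when the root lies on a branch — in either case $2g-v^s_4$ rootable scheme stems, since the root bud is rootable precisely when it is a scheme stem. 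Everything else is the same bijective bookkeeping as in Lemma~\ref{lem:reroot}, together with the elementary fact $\sum_r(|r|+1)z^{|r|}=\partial_t(tR_{\undir{e}}(t))|_{t=z}$.
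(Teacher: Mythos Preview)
Your proposal is correct and is precisely the argument the paper has in mind: the paper states the lemma without proof, saying only that ``the rerooting-on-the-scheme algorithm is the same as the rerooting described in the proof of \cref{lem:reroot}'' and that one ``choose[s] a rootable scheme stem among the $2g-v^s_4$ possible choices,'' which is exactly your bijection between $\mathcal{P}^{\undir{e}}$ with a marked rootable scheme stem and $\mathcal{R}^{\undir{e}}$ with a marked rootable stem, translated into generating series via $(2g-v^s_4)P_{\undir{e}}(z)=\partial_t(tR_{\undir{e}}(t))|_{t=z}$.

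One small remark: what you call the ``main obstacle'' is not an obstacle at all. The preceding lemma already covers \emph{every} map of $\mathcal{P}^{\undir{e}}$, since a pruned map is either scheme-rooted or rooted on a vertex of interior degree~$2$ --- there is no third case. So the count of $2g-v^s_4$ rootable scheme stems is already fully established there, and you need not rederive it.
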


Now that the map is rooted on a scheme bud, since it is well-oriented, all edges of a branch have the same orientation. We call \emph{merging} the procedure that replaces each branch by a single edge with the same orientation (see \cref{fig:redToScheme}). 

The map we obtain is called the \emph{labeled scheme}. It is not well-labeled because corners adjacent along an edge do not necessarily have the same label anymore, but the rule around a vertex is still respected. The set of labeled schemes is denoted $\mathcal{L}$.

\subsection{Analyzing a scheme}\label{subs:analyseAScheme}
For $l\in\mathcal{L}$, we now want to determine which maps have $l$ as labeled scheme. 
Each edge of $l$ should be replaced by a valid branch. However we need to be sure that after replacement, the map is well-labeled, and agrees with the labeling of the scheme. Therefore, following \cite{ChMaSc09}, we express the generating series of branches with prescribed height on the extremities.

\begin{figure}
\includegraphics[width=.95\textwidth]{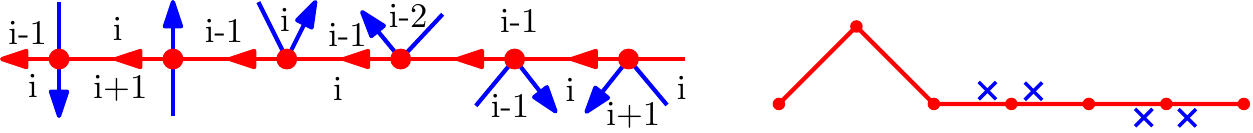}
\caption{An example of a branch displaying the $6$ possible types of vertices of degree $2$, and the corresponding weighted Motzkin path.}
\label{fig:deg2vertex}
\end{figure}

There are $6$ types of vertices of interior degree~$2$, displayed in \cref{fig:deg2vertex} left. If the bud and leaf are on opposite sides, the label of the corners either increases on both side or decreases on both sides. In the $4$ other cases, the stems are on the same side, and the label remains the same before and after the vertex. Therefore each type of vertex of interior degree $2$ can be represented by a step, depending on the variation of the labels around it: an up-step if the label increases, a down-step if it decreases, and 4 types of horizontal steps if it stays the same, represented with a blue cross placed accordingly to the position of the bud (see \cref{fig:deg2vertex} right). These steps are called \emph{weighted Motzkin steps}, and together they form a \emph{weighted Motzkin path}, whose variation of height corresponds to the variation of labels of the corresponding branch.

An edge of the labeled scheme going from label $i$ to label $j$ can therefore be replaced by a weighted Motzkin path going from height $i$ to height $j$, as illustrated in \cref{fig:toLabSch}. 

\begin{figure}  
\label{fig:toLabSch}
  \centering
   \includegraphics[width=.9\linewidth,page=11]{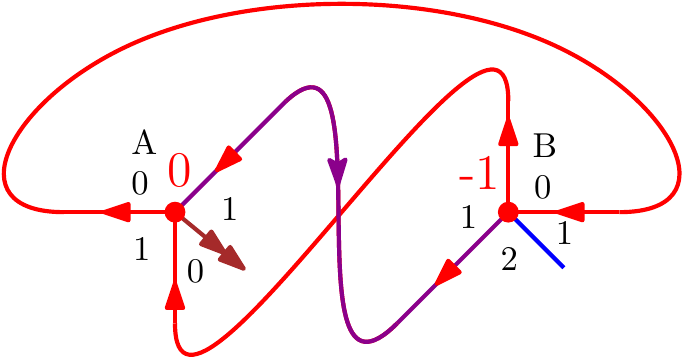}
  \caption{Reducing a map of $\mathcal{R}$ to a labeled scheme, by replacing each branch by a weighted Motzkin path.}
  \label{fig:redToScheme}
\end{figure}

We denote by $\mathcal{D}$ the set of weighted Motzkin paths going from height $0$ to height $-1$, that remain non-negative before the last step, counted by length. It satisfies the decomposition equation: $D=z(1+4D+D^2)$.
We denote by $\mathcal{B}$ the set of weighted Motzkin paths going from height $0$ to height $0$, counted by length. It satisfies the decomposition equation: $B=1+4zB+2zDB$. 

After combination with the previous equation, this equation is rewritten as a function of $D$ only: ${B=\frac{1+4D+D^2}{1-D^2}}$.
The generating series of paths going from height $i$ to $j$ is: $B\cdot D^{|i-j|}$. 

\begin{rem}
The role of $B$ and $D$ is very similar to the role of $B$ and $U$ in the work of Chapuy, Marcus and Schaeffer in \cite{ChMaSc09}. A few subtle difference may be noted: in our case there are $4$ different horizontal steps instead of only $1$. Furthermore, an element of $\mathcal{B}$ may be of length $0$, which is not the case in \cite{ChMaSc09}, and leads to simpler formulae.
\end{rem}

Recall that our purpose is to prove that $M_g(t)$, the series of maps of genus $g$, is rational in $t$. Using the lemmas of \cref{sec:analysis}, we will be able to express the generating series in terms of the auxiliary function $D$. A key observation of \cite{ChMaSc09} is that rationality in $t$ amounts to symmetry in $D$, thanks to \cref{lem:ratSym}.

For a function $\Psi$ in $D$, we denote its \emph{transposition} $\transpose{\Psi}(D)=\Psi(D^{-1})$. We say that $\Psi$ is \emph{symmetric} (resp. \emph{antisymmetric}) in $D$ if $\Psi=\transpose{\Psi}$ (resp. $\Psi=-\transpose{\Psi}$). Note for example that $B$ is antisymmetric.

\begin{lem}\label{lem:ratSym}
A function is rational in $z$ if and only if is it is rational and symmetric in $D$.
\end{lem}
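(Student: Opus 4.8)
The plan is to exploit the explicit algebraic relation between the auxiliary series $D$ and the counting variable $z$, namely $D = z(1+4D+D^2)$, which can be rewritten as $z = \frac{D}{1+4D+D^2}$. First I would observe that the transposition map $D\mapsto D^{-1}$ fixes $z$: indeed $\transpose{z} = \frac{D^{-1}}{1+4D^{-1}+D^{-2}} = \frac{D^{-1}\cdot D^2}{D^2+4D+1} = \frac{D}{1+4D+D^2} = z$, so $z$ is itself symmetric in $D$. This immediately gives one direction: any rational function of $z$, being a rational function of the symmetric quantity $z=z(D)$, is automatically rational and symmetric in $D$.

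For the converse, suppose $\Psi$ is rational and symmetric in $D$. The key structural fact is that the field extension $\mathbb{Q}(D)$ over $\mathbb{Q}(z)$ is of degree $2$ (since $D$ satisfies the quadratic $zD^2+(4z-1)D+z=0$ over $\mathbb{Q}(z)$), and the nontrivial automorphism of this extension is precisely the transposition $D\mapsto D^{-1}$ — this is because the two roots of $zX^2+(4z-1)X+z=0$ have product equal to $z/z = 1$, so they are $D$ and $D^{-1}$. Therefore a rational function $\Psi(D)$ lies in the fixed field $\mathbb{Q}(z)$ if and only if it is invariant under this automorphism, i.e.\ if and only if $\Psi = \transpose{\Psi}$. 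This is exactly the claimed equivalence.

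The steps, in order, are: (1) record the algebraic relation $z = D/(1+4D+D^2)$ coming from the decomposition equation $D=z(1+4D+D^2)$; (2) check directly that $z$ is symmetric, $\transpose{z}=z$, giving the ``only if'' direction; (3) identify the minimal polynomial of $D$ over $\mathbb{Q}(z)$ as $zX^2+(4z-1)X+z$, note it has degree $2$, and compute that its two roots multiply to $1$, hence are $D$ and $D^{-1}$; (4) invoke Galois theory for the quadratic extension $\mathbb{Q}(z)\subset\mathbb{Q}(D)$: the fixed field of the transposition automorphism is $\mathbb{Q}(z)$, so a rational function of $D$ is rational in $z$ iff it is symmetric. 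The main obstacle — really the only subtle point — is verifying that the two roots of the quadratic are genuinely $D$ and $D^{-1}$ (equivalently that the product of the roots is $1$), since this is what makes the transposition operation coincide with the field automorphism; once that is in hand, the rest is a standard application of the theory of quadratic extensions. One should also note in passing that $\mathbb{Q}(D)$ is a proper extension of $\mathbb{Q}(z)$ (i.e.\ $D\notin\mathbb{Q}(z)$, which holds because $D$ is not a rational function of $z$, as is clear from the square root in its expression), so that the extension really has degree $2$ and the automorphism is nontrivial.
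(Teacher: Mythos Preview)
Your proof is correct, and it takes a genuinely different route from the paper's. The paper argues elementarily: given a symmetric rational function $f=P/Q$ in $D$, it multiplies numerator and denominator by an appropriate power of $D$ (and, if needed, by $D^{1/2}+D^{-1/2}$) so that each becomes a symmetric Laurent polynomial in $D,D^{-1}$; it then invokes the fact that such symmetric Laurent polynomials are generated by $D+D^{-1}=z^{-1}-4$, so $f$ is rational in $z^{-1}$. Your argument instead identifies $\mathbb{Q}(z)\subset\mathbb{Q}(D)$ as a quadratic extension whose nontrivial automorphism is exactly the transposition $D\mapsto D^{-1}$ (because the two roots of $zX^2+(4z-1)X+z$ have product $1$), and concludes by Galois theory that the fixed field of transposition is $\mathbb{Q}(z)$.

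The paper's approach is more hands-on and self-contained, requiring no field-theoretic background; yours is shorter and more conceptual, and makes transparent \emph{why} the symmetry $D\leftrightarrow D^{-1}$ is the right invariance to test---it is precisely the Galois conjugation over $\mathbb{Q}(z)$. Your verification that the extension is genuinely of degree $2$ (via the non-square discriminant, equivalently $D\notin\mathbb{Q}(z)$) is the one point that needs to be stated, and you do state it. Either argument would be acceptable here.
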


\begin{proof}
Since $z=\frac{1}{D^{-1}+4+D}$, any function which is rational in $z$ is rational and symmetric in $D$. 

Let $f$ be a function rational and symmetric in $D$, whose irreducible expression is $\frac{P}{Q}$. We denote $d$ the average degree of $P$, which is half the sum of the higher and lower degree of $P$. By symmetry, it is also the average degree of $Q$. By symmetry, $P\cdot D^{-d}$ and $Q\cdot D^{-d}$ are both symmetric in $D$ and $D^{-1}$. In case $d$ is not an integer, $P\cdot D^{-d}$ and $Q\cdot D^{-d}$ are not polynomials in $D$ and $D^{-1}$. However, in this case, $P\cdot D^{-d}\cdot(D^{-\frac{1}{2}}+D^{\frac{1}{2}})$ and $Q\cdot D^{-d}\cdot(D^{-\frac{1}{2}}+D^{\frac{1}{2}})$ are symmetric polynomials in $D$ and $D^{-1}$.

Therefore, in any case, $f$ can be written as the ratio of $2$ symmetric polynomials. Since the family of polynomials $z^{-k}=(D^{-1}+4+D)^k$ generates all symmetric polynomials, $f$ can be written as a rational function of $z^{-1}$, which is enough to conclude the proof.
\end{proof}

\medskip

An \emph{unlabeled scheme} is a scheme where we forgot all labels. We denote by $\mathcal{S}$ the set of unlabeled schemes. The set of unrooted unlabeled scheme is denoted $\overline{\mathcal{S}}$.
We specialize our classes of maps depending on their unlabeled or unrooted scheme, by writing $\mathcal{M}_s$ or $\mathcal{M}_{\overline{s}}$ for example, where $s$ and $\overline{s}$ are an unlabeled scheme and an unrooted scheme. 

We denote $R^b$ the generating series of $\mathcal{R}$, counted only by leaves attached on a branch, instead of all leaves, and $R^b_s$ its restriction to maps with unlabeled scheme $s$. 
Note that \(R_s(z)=z^{2g-v_4^s(s)-1}R^b_s(z)\).

\begin{thm}
\label{thm:sym}
For any $s\in\mathcal{S}$, $R^b_{s}$ is rational and symmetric in $D$.
\end{thm}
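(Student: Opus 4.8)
The goal is to prove that $R^b_s$ is rational and symmetric in $D$ for each fixed unlabeled scheme $s$. The scheme $s$ is a fixed finite unicellular map (genus $g$) whose vertices have interior degree $3$ or $4$, carrying a bud/leaf decoration at each degree-$3$ vertex and rooted on a scheme bud. A map of $\mathcal{R}_s$ is recovered from $s$ by choosing, for each edge of $s$, a weighted Motzkin path (the branch), subject to the constraint that the resulting labeling is globally consistent: following \cite{ChMaSc09}, one assigns to each scheme vertex $u$ an integer label $\ell(u)$, and the edge of $s$ from $u$ to $u'$ must be filled by a Motzkin path of height variation $\ell(u')-\ell(u)$. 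The scheme stems impose that around each degree-$3$ vertex the three incident corner labels follow the well-labeling rule ($+1$ to the right of an outgoing bud/incoming leaf), which pins down, for each vertex, the relation between its label and the heights at which its incident branches start. So the first step is to set up this bijective description precisely: $\mathcal{R}_s$ is in bijection with the set of label assignments $\ell:\mathcal{V}(s)\to\ZZ$ (up to the normalization fixing the root corner at $0$) together with, for each edge $e=(u,u')$ of $s$, a weighted Motzkin path from height $0$ to height $\ell(u')-\ell(u)$ (appropriately shifted) lying in the right class. Because the series $\mathcal{B}$ of height-$0$ Motzkin paths and $\mathcal{D}$ of height-drop-$1$ paths are available, the series of a single branch with fixed endpoint label difference $d$ is $B\cdot D^{|d|}$.

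\textbf{Assembling the series.} Summing over all label assignments, $R^b_s(z)$ becomes (up to an overall power of $B$ — one factor per edge of $s$, a constant independent of the labels) a sum of the form $\sum_{\ell} \prod_{e=(u,u')} D^{|\ell(u')-\ell(u)|}$, where $\ell$ ranges over the admissible label configurations. Fixing the root label to $0$ and decomposing according to the sign pattern of $\ell(u')-\ell(u)$ on each edge (there are finitely many such patterns since $s$ is finite), each sign pattern contributes a geometric-type multisum in the free label parameters, which evaluates to a rational function of $D$; summing finitely many of these keeps rationality. Hence $R^b_s$ is rational in $D$ — this part is essentially the computation already carried out in \cite{ChMaSc09} adapted to our weighted steps, and I would present it as such rather than grinding the details.

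\textbf{The symmetry.} The real content is the symmetry $R^b_s=\transpose{R^b_s}$, i.e. invariance under $D\mapsto D^{-1}$. The transposition $D\mapsto D^{-1}$ reverses every Motzkin path (turning a path of height drop $d$ into one of height rise $d$), so at the level of branches it corresponds to reversing the orientation of each branch — equivalently, to negating all the labels. The plan is therefore to exhibit an explicit weight-preserving involution on $\mathcal{R}_s$ (or on the set of label configurations over $s$) that reverses all branch orientations, sends the label configuration $\ell$ to $-\ell$ (after renormalizing the root corner back to $0$), and carries the counting variable $z$ to itself while transforming $D\mapsto D^{-1}$. Concretely: reverse the orientation of the interior edges of the whole map, which turns buds into leaves and leaves into buds, reverse the underlying surface orientation if needed so that the well-labeling rule is preserved with signs flipped, then the new labeling is $-\ell$ shifted. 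One must check three things: (i) the image is again in $\mathcal{R}_s$ — same unlabeled scheme, still well-oriented, still scheme-rooted (the root scheme bud becomes a scheme leaf, but rerooting on an appropriate well-rootable stem, using \cref{lem:reroot}/the rerooting-on-the-scheme algorithm, puts it back in $\mathcal{R}_s$); (ii) the number of branch-leaves is preserved, so it is weight-preserving for $R^b_s$; and (iii) at the level of generating functions this involution realizes exactly the substitution $D\mapsto D^{-1}$ (using that $B$ is antisymmetric but appears to an even-related power, or more carefully that the $B$-factors cancel in the symmetric combination, as in \cite{ChMaSc09}).

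\textbf{Main obstacle.} I expect the hard part to be (i)–(iii) above done honestly: ensuring that reversing all orientations, flipping the surface, and rerooting genuinely lands back in the same unlabeled scheme $s$ and is an \emph{involution}, and that the bookkeeping of the root scheme stem (which changes type under reversal) is consistent with the $2g-v^s_4$ rooting choices from \cref{lem:rerootOnScheme} — in other words, making the ``reverse everything'' map well-defined on $\mathcal{R}_s$ rather than merely on some larger unrooted class. The parity subtleties around the antisymmetric $B$ and the power of $z$ extracted via $z=1/(D^{-1}+4+D)$ are the other delicate point, but those are the same as in \cite{ChMaSc09} and should go through verbatim once the combinatorial involution is set up.
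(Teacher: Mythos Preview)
Your rationality sketch is fine, but the symmetry argument has a genuine gap that cannot be patched along the lines you suggest.

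You propose to prove $R^b_s(D)=R^b_s(D^{-1})$ by exhibiting a weight-preserving involution on $\mathcal{R}_s$ (reverse all branches, negate all labels, reroot). This cannot work, for a structural reason: the substitution $D\mapsto D^{-1}$ is \emph{not} a combinatorial operation. The series $D^{-1}$, expanded in $z$, has negative coefficients, so no bijection between sets of positively weighted objects can ``realize'' it. Concretely, reversing a branch of height change $d$ gives a branch of height change $-d$, but both are counted by $B\cdot D^{|d|}$; negating all labels is a perfectly good weight-preserving bijection on $\mathcal{R}_s$, and precisely because it is weight-preserving it tells you nothing beyond $R^b_s=R^b_s$. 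The antisymmetry $\transpose{B}=-B$ that you flag is not a bookkeeping nuisance: it is the signal that the symmetry of $R^b_s$ comes from a \emph{signed} cancellation, not from a bijection. (Your appeal to \cite{ChMaSc09} is also misplaced: there the scheme-by-scheme series are \emph{not} symmetric in their $D$-analogue, which is exactly why that paper does not reach \cref{thm:BenCan91}.)

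The paper's proof is algebraic. One writes $R^b_s=B^{n_e}\sum_{o}\prod_i\Phi_o(i)\cdot D^{n_t(o)+n_a(o)-n_i(o)}$, where the sum is over surjections $o$ recording the relative order of the vertex labels, $\Phi_o(i)=\frac{D^{C_i}}{1-D^{C_i}}$ with $C_i$ a cut number, and the exponent $n_t+n_a-n_i$ accounts for the $\pm1$ shifts coming from the \emph{offset edges} of the scheme (edges whose endpoint corner-offsets differ). Two ingredients you are missing are essential here: first, the offset graph and its acyclicity (\cref{prop:acyclic}), which lets one orient the offset contributions consistently; second, the identity $\transpose{\Phi_o(i)}=-(1+\Phi_o(i))$, which upon expanding $\prod_i(1+\Phi_o(i))$ turns $\transpose{R^b_s}$ into a double sum over pairs $\refine{o}{p}$. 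After reversing $p$ and exchanging sums, symmetry reduces to a purely combinatorial identity on surjections (\cref{lem:sumCancel}), proved by an inclusion-exclusion argument on refinements that is essentially Euler--Poincar\'e on faces of the permutahedron in the offset-free case. None of this is visible from a ``reverse everything'' involution; you need to engage with the signed sum over orderings.
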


\cref{sec:symmetry} will be dedicated to prove \cref{thm:sym}. 
To that end, an additional study on the structure of the schemes will be required, that will be carried on in \cref{subs:offset}.

In the rest of \cref{subs:analyseAScheme}, we show that \cref{thm:sym}, in conjunction with the work of \cref{sec:analysis}, leads to \cref{thm:ratByScheme}, a refined version of \cref{thm:BenCan91}, where the unrooted scheme obtained by our bijection is specified.

Why do we restrict classes of maps by unrooted scheme rather than by unlabeled scheme?
Recall from \cref{def:unroot} that an unrooted scheme is an unlabeled and undirected scheme where we forgot which rootable stem is the actual root. Forgetting this information is useful, because the rerooting procedures are many-to-many applications rather than bijections, which means that going through our bijection does not associate a fixed unlabeled scheme to a given map; however, all unlabeled schemes associated to a map have the same unrooted scheme.

\begin{thm}\label{thm:ratByScheme} For any $\overline{s}$ in $\overline{\mathcal{S}}$, the generating series $M_{\overline{s}}(t)$ is a rational function of $T(t)$. 
\end{thm}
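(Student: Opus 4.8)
The strategy is to trace the chain of relations collected in \cref{tab:recap}, starting from \cref{thm:sym}, and to check that at each step rationality in $D$ (equivalently, by \cref{lem:ratSym}, in $t$) is preserved, all the while keeping the unrooted scheme $\overline{s}$ fixed. First I would fix $\overline{s}\in\overline{\mathcal{S}}$ and sum over the finitely many unlabeled schemes $s$ refining $\overline{s}$ (there are finitely many since $v^s_3+2v^s_4 = 4g-2$ bounds the scheme size). By \cref{thm:sym}, each $R^b_s$ is rational and symmetric in $D$; since $R_s(z)=z^{2g-v^s_4(s)-1}R^b_s(z)$ and $z$ itself is rational and symmetric in $D$ (indeed $z=\tfrac{1}{D^{-1}+4+D}$), each $R_s$, and hence $R_{\overline{s}}=\sum_{s}R_s$, is rational in $D$; by \cref{lem:ratSym} it is therefore a rational function of $z$. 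Here $z$ plays the role of the variable $t$ appearing in \cref{lem:pruning} and \cref{lem:rerootOnScheme}, i.e. it is the leaf-counting variable of the pruned/scheme-level families, which equals $T(t)$ once we unprune.

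Next I would climb back up: by \cref{lem:rerootOnScheme}, $P_{\overline{e}}(z)=\frac{1}{2g-v^s_4(\overline{e})}\cdot\frac{\partial(tR_{\overline{e}}(t))}{\partial t}(z)$, and differentiation of a rational function of $z$ is again rational in $z$, so $P_{\overline{s}}$ (summed over the schemes underlying $\overline{s}$) is rational in $z$. Then \cref{lem:pruning}, in its scheme-refined form $U_{\overline{s}}(z)=\frac{\partial T}{\partial z}\cdot P_{\overline{s}}(T(z))$, shows $U_{\overline{s}}$ is a rational function of $T(z)$, because $\frac{\partial T}{\partial z}$ is itself expressible rationally in $T$ via the algebraic relation $T=z+3T^2$ (differentiating gives $\frac{\partial T}{\partial z}=\frac{1}{1-6T}$). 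Finally \cref{cor:wroot} gives ${}^lO^\times_{\overline{s}}(t)=\frac{2}{t}\int_0^t U_{\overline{s}}(z)\,dz$ and $O^\times_{\overline{s}}(t)=t^{2g-1}\cdot{}^lO^\times_{\overline{s}}(t)$; the one delicate point is that a primitive of a rational function of $T(t)$ need not be rational in $T(t)$ — a priori it could involve a logarithm. I would rule this out as in \cite{ChMaSc09}: by \cref{lem:ratSym} the primitive is a genuine power series in $t$ that is rational in $t$ if and only if it is rational and symmetric in $D$, and one checks that $U_{\overline{s}}(z)\,\frac{dz}{dt}$ has no residue obstruction — concretely, the potential $\frac{1}{D}$ and $\frac{1}{D^{-1}}$ simple-pole terms that would integrate to logarithms are exactly the antisymmetric part, which symmetry of the integrand forbids. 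Summing $O^\times_{\overline{s}}$ over all unrooted schemes arising in genus $g$ recovers $O^\times_g(t)=M_g(t)$ via \cref{cor:genToOp}, but the per-scheme statement of \cref{thm:ratByScheme} is exactly $O^\times_{\overline{s}}$ rational in $t$, hence in $T(t)$ since $T$ is an algebraic (degree $2$) function of $t$ with $t$ rational in $T$.

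The main obstacle is the last step: controlling the integration in \cref{cor:wroot} so that no logarithmic term survives. Everything before that is a mechanical propagation of "rational in $D$" through differentiation, composition with $T$, and multiplication by powers of $z$; the integral is the only operation that can leave the field of rational functions. The cleanest way to handle it, and the one I expect the paper to take, is to invoke \cref{lem:ratSym} in the contrapositive: show the integrand, viewed in the variable $D$, is antisymmetric (so that its primitive is symmetric, matching the known symmetry of ${}^lO^\times_{\overline{s}}$), and separately that the primitive is a well-defined rational function of $D$ because the integrand's only possible obstruction to having a rational primitive — a nonzero residue at $D=0$ or $D=\infty$ — is incompatible with that antisymmetry. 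One should also double-check the bookkeeping of the leaf- versus vertex-counting variables and the finitely-many-schemes summation, but these are routine. With the no-logarithm point secured, $M_{\overline{s}}(t)={}^lO^\times_{\overline{s}}(t)\cdot t^{2g-1}$ is rational in $t$ and therefore a rational function of $T(t)$, completing the proof.
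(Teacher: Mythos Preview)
Your overall strategy---trace the chain of relations in \cref{tab:recap} down to \cref{thm:sym} and propagate rationality back up---is exactly right, and you correctly identify the integration in \cref{cor:wroot} as the only nontrivial step. But you miss the key observation that makes this step immediate, and your proposed workaround does not close the gap.

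The paper's proof does not argue about residues or logarithms at all. Instead it observes that the integral from \cref{cor:wroot} and the derivative from \cref{lem:rerootOnScheme} cancel exactly, via the change of variable $u=T(z)$. Concretely: since $U_{\overline{s}}(z)=\dfrac{\partial T}{\partial z}\cdot P_{\overline{s}}(T(z))$, one has
\[
\int_0^t U_{\overline{s}}(z)\,dz \;=\; \int_0^{T(t)} P_{\overline{s}}(u)\,du
\;=\; \frac{1}{2g-v^s_4}\int_0^{T(t)} \frac{\partial\bigl(uR_{\overline{s}}(u)\bigr)}{\partial u}\,du
\;=\; \frac{T(t)\,R_{\overline{s}}(T(t))}{2g-v^s_4},
\]
so that $M_{\overline{s}}(t)=\dfrac{2\,t^{2g-2}}{2g-v^s_4}\,T(t)\,R_{\overline{s}}(T(t))$ is manifestly rational in $T(t)$ (using $t=T-3T^2$). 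The paper even flags this cancellation in \cref{rem:shortcut}. You wrote down both ingredients---$U=\tfrac{\partial T}{\partial z}\,P(T)$ and $P=\tfrac{1}{c}\tfrac{\partial(uR)}{\partial u}$---but did not combine them.

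Your alternative ``no-residue'' argument is not a valid substitute. You are integrating with respect to $t$, not $D$, and the correspondence $t\leftrightarrow D$ passes through two algebraic extensions ($t\mapsto T(t)\mapsto D$); the logarithmic obstructions to a rational-in-$T$ primitive are governed by the residues of the integrand at \emph{all} poles in the $T$-plane (or $D$-plane), not merely at $D=0$ and $D=\infty$. Symmetry in $D$ of the integrand does not by itself force those residues to vanish. So as written, this step is a genuine gap: without the change-of-variable cancellation (or a much more careful residue computation that you have not supplied), the argument does not establish rationality.
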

 
Since $\overline{\mathcal{S}}_g$ is finite for any fixed $g$, this theorem implies that $M_g(t)=\sum_{\overline{s}\in \overline{\mathcal{S}}_g}M_{\overline{s}}(t)$ is rational in $T(t)$, which is equivalent to \cref{thm:BenCan91}.

\begin{rem}
The main reason why we are able to obtain the rationality of $M_g$ is that, unlike in \cite{ChMaSc09}, the rationality holds for each scheme, which makes it possible to analyse more specifically one scheme at a time. The reason why maps are rational by scheme in our case but not in \cite{ChMaSc09} remains somewhat of a mystery.
\end{rem}

\begin{proof}[Proof of \cref{thm:ratByScheme}]
We derive from the previous sections (see also \cref{tab:recap}) that:
\begin{equation}
\begin{array}{rll}
M_{\overline{s}}(t)&=BC^{\times}_{\overline{s}}(t) \\
&=O^{\times}_{\overline{s}}(t),
&\text{ because of \cref{cor:genToOp}}\\
&=t^{2g-1}\cdot {}^lO_{\overline{s}}(t),
&\text{ by definition}\\
&=t^{2g-1}\cdot\frac{2}{t}\int_0^t U_{\overline{s}}(z)dz,
&\text{ because of \cref{thm:wroot}}\\
&=2t^{2g-2}\int_0^t \frac{dT}{dz}\cdot P_{\overline{s}}(T(z))\cdot dz,
&\text{ because of \cref{lem:pruning}}\\
&=\frac{2t^{2g-2}}{2g-v^s_4(s)}\int_0^t \frac{d(uR_{\overline{s}}(u))}{du}(T(z))\cdot\frac{dT}{dz} \cdot dz,
&\text{ because of \cref{lem:rerootOnScheme}}\\
&=\frac{2t^{2g-2}}{2g-v^s_4(s)}\cdot T(t)\cdot R_{\overline{s}}(T(t)),
&\text{ by a change of variable}\\
\end{array}
\end{equation}

Hence, in order to prove that $M_{\overline{s}}(t)$ is rational in $T$ (and $t$), it suffices to prove that $R_{\overline{s}}(z)$ is rational in $z$, or equivalently that $R_{\overline{s}}$ is rational and symmetric in $D$, thanks to \cref{lem:ratSym}.
For all $\overline{s}\in\overline{\mathcal{S}}$, we have $R_{\overline{s}}=\sum_{\stackrel{t\in\mathcal{S}}{\overline{t}=\overline{s}}}R_{t}$. 

Therefore, \cref{thm:sym} is enough to conclude the proof.
\end{proof}

\begin{rem}\label{rem:shortcut}
Note that we apply twice the rerooting algorithm, the first time from a well-rootable stem to any rootable stem, and the second time from a rootable stem to a rootable scheme stem. In the course of the proof of \cref{thm:ratByScheme}, these two operations, in terms of generating functions, correspond to an integral and a derivative. Although the two rerooting operations are separated by the pruning operation, it appears that a change of variable allows the integral and derivative to cancel out. This can actually be seen in a combinatorial way, by merging the two rerooting operations and the pruning operation into a single operation, which result in \cref{lem:shortcut} 
\end{rem}

\begin{lem}\label{lem:shortcut}
For any unrooted scheme $\overline{s}$, there is a $(2g-v^s_4(\overline{s}))$-to-$2$ application from maps of $\mathcal{O}_{\overline{s}}^{\times}$ to maps of $\mathcal{R}_{\overline{s}}$ with a tree associated to each of its rootable stems.
\end{lem}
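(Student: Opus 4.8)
The plan is to compile the three operations appearing in the chain of equalities in the proof of \cref{thm:ratByScheme}—the first rerooting (from a well-rootable stem to any rootable stem, \cref{thm:wroot}), the pruning (\cref{lem:pruning}), and the rerooting-on-the-scheme (\cref{lem:rerootOnScheme})—into a single combinatorial correspondence, and to track how the trees hanging off the branches are accounted for. Concretely, given a map $m\in\mathcal{O}_{\overline{s}}^{\times}$, I would first record, for each of its $2$ well-rootable stems, the underlying unrooted map $\undir{m}$; then apply the pruning procedure, which detaches every treelike part and replaces it by a single rootable stem, recording the tree (an element of $\mathcal{T}$) that was removed. This produces a map in $\mathcal{P}_{\overline{s}}$ decorated with a tree on each of its rootable stems (the root bud carrying, as in the proof of \cref{lem:pruning}, a tree with a marked leaf). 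Finally I would choose one of the $2g-v_4^s(\overline{s})$ rootable scheme stems and apply the rerooting-on-the-scheme algorithm to land in $\mathcal{R}_{\overline{s}}$, still carrying a tree on each (now scheme-)rootable stem.

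The counting bookkeeping is where the $(2g-v_4^s(\overline{s}))$-to-$2$ multiplicity comes from. Starting from a fixed $m\in\mathcal{O}_{\overline{s}}^{\times}$: \cref{thm:wroot} says that a fixed $\undir{m}$ with $n+1$ rootable stems corresponds $(n+1)$-to-$2$ to well-rooted maps, i.e.\ the operation of passing to "unrooted map plus a choice of rootable stem" is where the factor $n+1$ enters—but this is exactly absorbed by the pruning, which turns the choice of rootable stem into the choice of where to re-graft the root, with the trees keeping track of the discarded data so that nothing is lost. After pruning we are looking at a pruned map together with a tree on each rootable stem, and by \cref{lem:reroot} (applied in the scheme-rerooting form, \cref{lem:rerootOnScheme}) choosing one of the $2g-v_4^s(\overline{s})$ rootable scheme stems and rerooting is a bijection onto $\mathcal{R}_{\overline{s}}$-with-trees with a marked rootable stem. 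Unwinding, each element of $\mathcal{O}_{\overline{s}}^{\times}$ gives rise to exactly $2g-v_4^s(\overline{s})$ decorated scheme-rooted maps (one per rootable scheme stem chosen), and conversely each decorated scheme-rooted map in $\mathcal{R}_{\overline{s}}$-with-trees arises from exactly $2$ maps of $\mathcal{O}_{\overline{s}}^{\times}$—the "$2$" being, as in \cref{thm:wroot}, the two well-rootable stems that can serve as the root once a well-rooted representative is reconstructed.

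Operationally I would phrase the application explicitly, mirroring \cref{rem:shortcut}: given $m\in\mathcal{O}_{\overline{s}}^{\times}$ and a chosen rootable scheme stem $\sigma$ of the pruned-and-rerooted map, (i) prune $m$, remembering the tree $\tau_c\in\mathcal{T}$ removed at each corner $c$ that carried a treelike part, with the root tree carrying a marked leaf; (ii) reroot on $\sigma$ using the algorithm of the proof of \cref{lem:reroot}, which relabels one sub-face by $-2$, reverses and re-cuts the joining edge, and redefines the interior orientation by a tour of the face; (iii) output the resulting element of $\mathcal{R}_{\overline{s}}$ together with all the trees re-attached as decorations on the rootable stems. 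The inverse attaches trees back, re-roots back to a well-rootable stem, and un-prunes; the two preimages correspond to the two well-rootable stems. All the verifications that labels and orientations remain compatible are exactly those already carried out in the proofs of \cref{thm:wroot}, \cref{lem:pruning}, and \cref{lem:rerootOnScheme}, so no new computation is needed—one only checks that the decorations (the trees) are transported faithfully and that the multiplicities multiply as claimed.

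The main obstacle I anticipate is purely bookkeeping rather than conceptual: one must be careful that the "$+1$" in the $(n+1)$-to-$2$ count of \cref{thm:wroot} (number of rootable stems of $\undir{m}$) is genuinely cancelled by the pruning step and replaced by the "number of rootable scheme stems" count $2g-v_4^s(\overline{s})$ from \cref{lem:rerootOnScheme}, and in particular that the root bud's special treatment (a tree with a marked leaf, contributing a weight shift) is consistent across both rerooting steps so that the composite is still well-defined on each of the two well-rootable choices. Once that is pinned down, \cref{lem:shortcut} follows by simply composing the three correspondences and reading off the multiplicity $2g-v_4^s(\overline{s})$ on one side and $2$ on the other.
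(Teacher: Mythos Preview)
Your proposal is correct and follows exactly the approach the paper itself indicates: the paper does not give a detailed proof of \cref{lem:shortcut} but only \cref{rem:shortcut}, which states that the lemma is obtained by ``merging the two rerooting operations and the pruning operation into a single operation''. Your plan carries out precisely this merging, tracking the trees as decorations and reading off the multiplicities $2g-v_4^s(\overline{s})$ and $2$ from \cref{lem:rerootOnScheme} and \cref{thm:wroot} respectively, so there is nothing to add beyond tightening the bookkeeping you already flag.
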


\subsection{The offset graph}\label{subs:offset}

We now label each scheme vertex with the minimal label of its corners, and relabel its corners relatively to this minimum. This second label is called the \emph{offset label}. In case the outgoing and ingoing edges around a vertex are alternated, the offset labels around the vertex are $0101$. Otherwise, the sequence is $0121$.

The edges of the scheme can be of two different types. Look at the offset labels around it. If the offset labels are the same ($01$ or $12$) on both sides, the edge is called \emph{level}. If the labels are $01$ on one side and $12$ on the other, the edge is \emph{offset} toward the second one.
We define the \emph{offset graph} as the oriented sub-graph of the scheme where only the offset edges are kept, along with their orientation. 
See \cref{fig:exOffset} for an example.

\begin{figure}
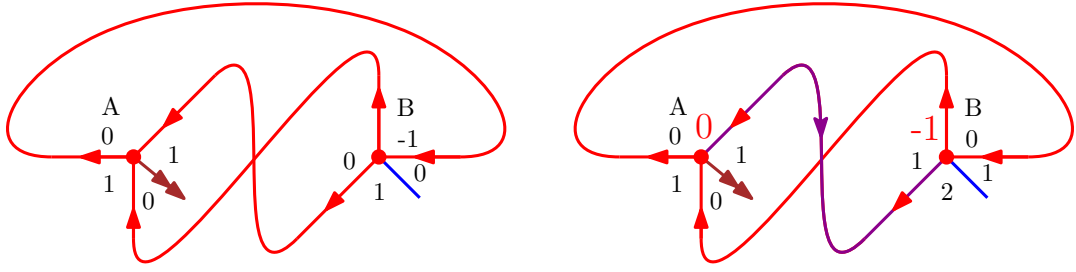
  
  \centering
  \begin{subfigure}[c]{.45\linewidth}
  \centering
   \includegraphics[width=.9\linewidth,page=4]{longEx7EmbeddedScheme}
  \end{subfigure}
  \begin{subfigure}[c]{.45\linewidth}
  \centering
   \includegraphics[width=.9\linewidth,page=1]{longEx7EmbeddedScheme}
  \end{subfigure} 
  \caption{The scheme on the left has the usual labels. On the right, these labels define labels on vertices (red), and offset labels (black). An offset oriented edge (purple) appears. Here, the offset graph is reduced to this single offset edge.}
  \label{fig:exOffset}
\end{figure}

\begin{prop}
\label{prop:acyclic}
The offset graph of a scheme is acyclic.
\end{prop}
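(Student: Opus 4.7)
The plan is to prove acyclicity by exhibiting a strict potential function on the scheme's vertices that increases along every directed edge of the offset graph; this immediately rules out directed cycles.

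First I would analyze the local offset structure at each scheme vertex. A degree-$4$ scheme vertex has cyclic corner offsets $(0,1,0,1)$, so every incident edge displays the offset pair $\{0,1\}$ at this vertex. A degree-$3$ scheme vertex (carrying a stem) has cyclic offsets $(0,1,2,1)$, so incident edges display either $\{0,1\}$ or $\{1,2\}$. Consequently every offset edge has at least one degree-$3$ endpoint, namely the one on its $\{1,2\}$-side, with the unique offset-$2$ corner at that endpoint adjacent to the edge.

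Next I would use the well-labeling rule (the higher-labeled corner is to the right of the oriented edge) to determine the scheme orientation of each offset edge. In each possible configuration of offset labels at the two endpoints, a direct check should show that the scheme's orientation points consistently from the endpoint carrying the $\{0,1\}$-pair near $e$ to the endpoint carrying the $\{1,2\}$-pair; this is exactly the content of the phrase ``offset toward the second one.''

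Finally I would define a potential $\Phi(v)$ on vertices and verify strict monotonicity along directed offset edges. The natural candidate $\Phi(v)=L(v)$ (the minimum corner label at $v$) fails for unbranched direct offset edges, where the equality of corner labels along an edge forces $L(v)=L(u)-1$ in the offset-pattern configuration $(a_u^1,a_v^1,a_u^2,a_v^2)=(0,1,1,2)$. I would therefore use a refinement such as $\Phi(v)=2L(v)+\epsilon(v)$ with a local correction $\epsilon(v)\in\{0,1\}$ encoding whether $v$ is on the $\{0,1\}$- or $\{1,2\}$-side of its incident offset edges, and use the weighted Motzkin-path structure of the branches in the underlying unicellular map to show that $\Phi$ strictly increases across each directed offset edge.

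The hard part will be defining $\epsilon$ consistently across all configurations at a given vertex (which can carry several incident offset edges) and verifying strict monotonicity uniformly: branches of the underlying unicellular map can have arbitrary length and arbitrary height variation, so the argument must rely only on the local offset data at the endpoints rather than on the global shape of the branches.
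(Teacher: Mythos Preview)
Your approach has two genuine gaps.

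First, the claim in your second step is false: the scheme orientation of an offset edge does \emph{not} always coincide with the offset-graph orientation. At a $0121$ vertex the well-labeling rule forces the in/out pattern to be non-alternating (two ins followed by two outs in cyclic order), and a direct check shows that the two edges carrying the $\{1,2\}$ offset pair are precisely one incoming and one outgoing edge of the scheme. So an offset edge pointing toward a $0121$ vertex $v$ in the offset graph may perfectly well be outgoing from $v$ in the scheme. The paper's own proof explicitly treats both possibilities (its ``leftmost'' and ``rightmost'' cases), and only rules out the misaligned case after a non-trivial argument about the face tour.

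Second, and more fundamentally, no potential of the form $\Phi(v)=2L(v)+\epsilon(v)$ with bounded $\epsilon$ can work. The offset structure (which edges are offset and in which direction) is determined entirely by the in/out pattern at the scheme vertices; it is completely independent of the vertex labels $L(v)$. Indeed, for a fixed unlabeled scheme the enumeration in the paper sums over \emph{all} tuples of non-negative heights $(h_1,\dots,h_{n_v})$ with minimum $0$. So for any offset edge $u\to v$ you can realise a labeled scheme with $L(v)$ arbitrarily smaller than $L(u)$, while the offset edge still points from $u$ to $v$; this kills monotonicity of $\Phi$. Your parenthetical ``unbranched direct offset edges'' suggests you are thinking of edges of the unicellular map rather than of the scheme, but the offset graph lives on the scheme, whose edges are entire branches with unconstrained height variation. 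The same issue makes your $\epsilon$ ill-defined: a $0121$ vertex in a hypothetical offset cycle is simultaneously on the $\{0,1\}$-side of one incident offset edge and on the $\{1,2\}$-side of another.

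What actually forces acyclicity is not the labeling but the well-orientedness, i.e.\ the global order in which the unique face visits the edges. The paper's proof argues by contradiction: take the first edge $e_1$ of a putative offset cycle $C$ encountered in the clockwise face tour from the root, analyse the two local pictures at its $\{1,2\}$-endpoint, and in each case use the ``visited backward first'' rule to show that either some earlier-visited edge must already lie in $C$ (contradicting the choice of $e_1$), or that the cycle is forced into a configuration whose right side can never be reached by the tour. This is an essentially global argument about the tour order; it does not reduce to a vertex potential.
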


\begin{proof}
Let's assume by contradiction that there exists a labeled scheme $l\in\mathcal{L}$ whose offset graph is not acyclic and let $C$ be a simple cycle of the offset graph of $l$. Since a $0101$-type vertex can only be a source in the offset graph, all vertices of $C$ are of type $0121$.

Let $e_1$ be the first edge of $C$ visited during a clockwise tour of $l$ starting from the root. Since $l$ is well-oriented, $e_1$ is visited backward first. Depending on whether it is visited forward or backward in the offset graph, we are in one of the two cases depicted in \cref{fig:acycOff}.

\begin{figure}
\centering
\includegraphics[scale=1.2]{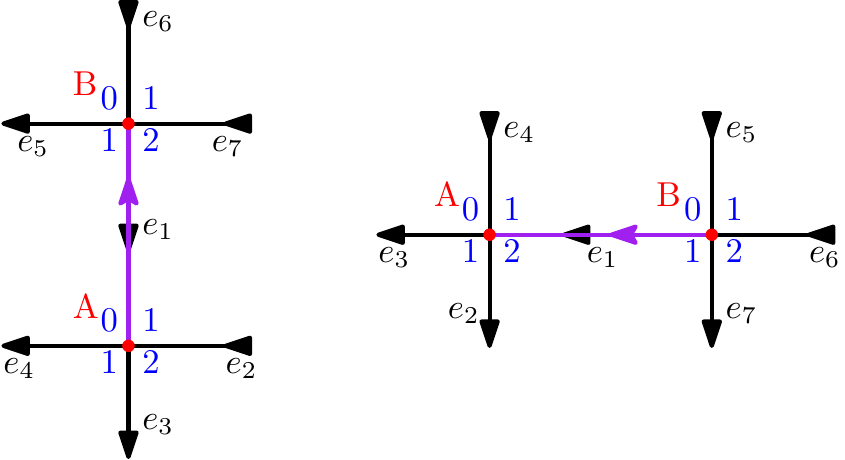}
\caption{illustration of the two possible cases of first visited edge of an offset cycle. In both cases, $e_2$ to $e_7$ can \textit{a priori} be either stems or edges, and belong or not to the offset graph.}
\label{fig:acycOff}
\end{figure}

\begin{itemize}
\item Assume we are in the leftmost case. Consider now the first visited edge (or stem) adjacent to vertex $A$, and call it $e_f(A)$. Be it the root bud or a normal edge, $e_f(A)$ has to be visited backward first in the tour, and hence it is either $e_3$ or $e_4$. 

Suppose $e_f(A)=e_4$, then $e_3$ cannot be an interior edge because it would have to be visited backward first in the tour, whereas it is outgoing from $A$ and cannot have been visited before $e_4$ by definition of $e_f(A)$. Hence $e_3$ is a bud, and $e_2$ is visited before $e_1$ in the tour. 
If $e_f(A)=e_3$, then $e_2$ is visited before $e_1$. 

In any case both $e_2$ and $e_3$ are visited before $e_1$, which implies they are not part of $C$. This is a contradiction because $C$ is a cycle which is oriented in the offset graph, which means it does not contains $e_4$, because $e_4$ has relative label $01$ around $A$.

\item Now assume we are in the rightmost case. We define similarly $e_f(B)$ to be the first visited edge (or stem) around $B$. Again, since it has to be outgoing, it is either $e_1$ or $e_7$.

Suppose $e_f(B)=e_7$. Then both $e_7$ and $e_6$ are visited before $e_1$ in the tour, which implies they are not in $C$. Moreover, since $e_5$ has relative label $01$ around $B$, it cannot be part of the offset cycle $C$. This is a contradiction.

Therefore $e_f(B)=e_1$. Then similarly to the leftmost case, we conclude that $e_7$ is a bud. By consequence, since $C$ is a cycle in the offset graph, the edge coming right after $e_1$ in $C$ has to be $e_6$, which has relative label $12$ around $B$.
\end{itemize}

The same reasoning (we already know we are in the rightmost case) can be applied on $e_6$, the second visited vertex of $C$, and so on. By induction we conclude that $C$ is only made of edges whose orientation in the map and the offset graph coincide. Additionally, the only edges or stems outgoing from $C$ are on its left, whereas the only edges or stems ingoing from $C$ are on its right. We also know that the left part of $C$ is visited backward all at once, from $e_1$ to $e_3$.

Now, in a clockwise tour of the face, which edge (or stem) $e_f^r(C)$ is visited just before the first time we meet the right side of $C$? Since in the tour, $e_f^r(C)$ has to be visited backward first (even if it is the root), it can not be any of the edges or stems on the right of $C$.
Since there is no good candidate for $e_f^r(C)$, there is a contradiction, which implies that the offset graph is acyclic.
\end{proof}

\section{Rationality of maps with a given scheme}
\label{sec:symmetry}

In this section we prove \cref{thm:sym}. Throughout all the section, $s$ is an unlabeled scheme.

\subsection{Counting rerooted pruned maps}

We now proceed to compute the generating function of all maps with a fixed unlabeled scheme. A branch from height $i$ to height $j$ in a scheme contributes for $B\cdot D^{|i-j|}$. To get the total participation of an unlabeled scheme to $R^b_s$, we need to sum over all possible labelings of the vertices. To deal with the relativity of label, instead of requiring that the root has label $01$, we arbitrarily decide that the lowest height has to be $0$, which is not equivalent since the maps are not necessarily well-rooted.

To make things simpler, we first analyse the case of an unlabeled scheme $s$ with no offset edge. In this case, we obtain:

\begin{equation}
R^b_s=
\sum\limits_{\stackrel
{h_1\cdots h_{n_v}\in\mathbb{N}}
{\text{min}(h_1,\cdots, h_{n_v})=0}}
\prod\limits_{\stackrel{(v_i,v_j)\in\mathcal{E}(s)}{i<j}}
B\cdot D^{|h_i-h_j|}.
\end{equation}

For a given affectation of heights, we define a function characterizing the ordering of heights of vertices. We set $k$ to be the total number of distinct heights. To each vertex $v_i$ we associate an integer $o(i)$ between $1$ and $k$ such that $h_i<h_j$ (resp. $h_i=h_j$, $h_i>h_j$) if and only if $o(i)<o(j)$ (resp. $o(i)=o(j)$, $o(i)>o(j)$). Note that $o$ is necessarily a surjection. The size of the image of a surjection $o$ is denoted $k(o)$. Whenever it is unambiguous, we simply write $k$. Let $[k]=\{1,2,\cdots,n\}$. We denote $S(n)$ the set of surjection from $[n]$ to $[k]$, for any $k$ in $[n]$.

We can therefore rewrite the formula for the generating series:
\begin{equation}
R^b_s=
B^{n_e}\cdot 
\sum\limits_{o\in S(n_v)}
\sum\limits_{0=h_1<\cdots<h_{k}}
\prod\limits_{\stackrel{(v_i,v_j)\in\mathcal{E}(s)}{i<j}}
D^{|h_{o(i)}-h_{o(j)}|}.
\end{equation}

\begin{rem}
This idea of grouping the labeled schemes that have the same relative ordering of scheme vertices is reminiscent to the use of \emph{standard schemes} in \cite{ChMaSc09}.
\end{rem}

For a subset $S$ of vertices, we define the cut of a subgraph as: \( C(S)=|\{(u,v)\in\mathcal{E}(s) \text{ such that } u\in S \text{ and } v\notin S \}|.\)
We also define: $\Phi_o(i)=\frac{D^{C(o^{-1}([i]))}}{1-D^{C(o^{-1}([i]))}}.$

\begin{lem}\label{lem:noOffset}
For an unlabeled scheme $s\in\mathcal{S}$ with no offset edge, we have:
\begin{equation}
R^b_s=
B^{n_e}\cdot 
\sum\limits_{o\in S(n_v)}
\prod\limits_{i=1}\limits^{k-1}
\Phi_o(i).
\end{equation}
\end{lem}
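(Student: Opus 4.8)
The plan is to start from the rewritten formula
\begin{equation}
R^b_s=
B^{n_e}\cdot
\sum\limits_{o\in S(n_v)}
\sum\limits_{0=h_1<\cdots<h_{k}}
\prod\limits_{\stackrel{(v_i,v_j)\in\mathcal{E}(s)}{i<j}}
D^{|h_{o(i)}-h_{o(j)}|}
\end{equation}
and to evaluate, for each fixed surjection $o\in S(n_v)$, the inner sum over strictly increasing tuples $0=h_1<h_2<\cdots<h_k$. The key change of variables is to pass to the \emph{gaps} $g_i=h_{i+1}-h_i\geq 1$ for $i=1,\dots,k-1$ (there is no free parameter for $h_1$ since it is pinned to $0$). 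For an edge $(v_i,v_j)\in\mathcal{E}(s)$ with $o(i)<o(j)$ one has $|h_{o(i)}-h_{o(j)}|=g_{o(i)}+g_{o(i)+1}+\cdots+g_{o(j)-1}$, so the product over edges becomes a product of powers of $D$ whose exponent is a nonnegative-integer-linear combination of the $g_i$. I would then swap the product-over-edges and the reindex so as to collect, for each $i\in\{1,\dots,k-1\}$, the total exponent of $g_i$: an edge $(v_i,v_j)$ (with $o(i)<o(j)$) contributes $g_\ell$ precisely when $o(i)\le \ell <o(j)$, i.e.\ when $v_i\in o^{-1}([\ell])$ and $v_j\notin o^{-1}([\ell])$. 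Summing over all edges, the coefficient of $g_\ell$ in the total exponent is exactly the cut $C(o^{-1}([\ell]))$ — here the no-offset hypothesis is what guarantees every edge $(v_i,v_j)$ with $v_i\in o^{-1}([\ell])$, $v_j\notin o^{-1}([\ell])$ really does contribute (there are no ``extra'' label jumps hiding in a branch), so the bookkeeping is clean.

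With this, the inner sum factorizes completely:
\begin{equation}
\sum\limits_{0=h_1<\cdots<h_{k}}
\prod\limits_{\stackrel{(v_i,v_j)\in\mathcal{E}(s)}{i<j}}
D^{|h_{o(i)}-h_{o(j)}|}
=\prod\limits_{\ell=1}\limits^{k-1}\ \sum\limits_{g_\ell\geq 1} D^{\,g_\ell\, C(o^{-1}([\ell]))}
=\prod\limits_{\ell=1}\limits^{k-1}\frac{D^{C(o^{-1}([\ell]))}}{1-D^{C(o^{-1}([\ell]))}}
=\prod\limits_{\ell=1}\limits^{k-1}\Phi_o(\ell),
\end{equation}
using the geometric series $\sum_{m\geq 1}x^m=\frac{x}{1-x}$ (valid as a formal power series in $D$ once one notes $C(o^{-1}([\ell]))\geq 1$, because $s$ is connected so every proper nonempty vertex subset has positive cut). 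Substituting back into the displayed expression for $R^b_s$ and pulling the $B^{n_e}$ outside gives exactly
\begin{equation}
R^b_s=B^{n_e}\cdot\sum\limits_{o\in S(n_v)}\prod\limits_{i=1}\limits^{k-1}\Phi_o(i),
\end{equation}
which is the claim.

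The step I expect to be the main obstacle — or at least the one requiring the most care — is justifying that the coefficient of each gap variable $g_\ell$ in the exponent of $D$ is precisely $C(o^{-1}([\ell]))$, and in particular making fully explicit where the ``no offset edge'' assumption is used. One must check that the telescoping identity $|h_{o(i)}-h_{o(j)}|=\sum_{\ell=o(i)}^{o(j)-1} g_\ell$ is valid for \emph{every} edge (it is, since $o$ is order-compatible with the $h$'s and we assumed $i<j$ with the convention sorting endpoints), and that no edge is double-counted or missed when passing to the cut — i.e.\ that for a fixed $\ell$ the set of edges contributing a factor $D^{g_\ell}$ is in bijection with the edges of the cut $\big(o^{-1}([\ell]),\,o^{-1}([\ell])^c\big)$. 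A secondary, purely formal point is to confirm convergence/well-definedness of the interchange of the (finite) product over $\ell$ with the infinite sums over the $g_\ell$: this is fine in $\mathbb{Z}[[D]]$ since each $\sum_{g_\ell\ge 1}D^{g_\ell C(\cdot)}$ has no constant term and the index set over $\ell$ is finite, so the Cauchy product is legitimate term by term. Everything else is routine manipulation of formal power series.
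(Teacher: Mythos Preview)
Your proof is correct and follows essentially the same route as the paper: the gap substitution $g_\ell=h_{\ell+1}-h_\ell$ (the paper's $I_\ell$), the telescoping of $|h_{o(i)}-h_{o(j)}|$ into a sum of gaps, the identification of the coefficient of each $g_\ell$ with the cut $C(o^{-1}([\ell]))$, and the geometric-series evaluation are exactly the paper's steps. Your remarks on connectivity ensuring a positive cut and on formal convergence are sound extra justifications not spelled out in the paper.
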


\begin{proof}

We apply the change of variables: $I_i=h_{i+1}-h_{i}$:
\begin{equation}
R^b_s=
B^{n_e}\cdot
\sum\limits_{o\in S(n_v)}
\sum\limits_{I_1,\cdots,I_{k-1}\in\mathbb{N}^*}
\prod\limits_{\stackrel{(v_i,v_j)\in\mathcal{E}(s)}{o(i)\leq o(j)}}
D^{I_{o(i)}+I_{o(i)+1}+\cdots+I_{o(j)-1}}.
\end{equation}

Our expression can then be rewritten in terms of cut:
\begin{align}
R^b_s&=
B^{n_e}\cdot 
\sum\limits_{o\in S(n_v)}
\sum\limits_{I_1,\cdots,I_{k-1}\in\mathbb{N}^*}
\prod\limits_{i=1}\limits^{k-1}
D^{I_i\cdot C(o^{-1}([i]))}\\
&=B^{n_e}\cdot 
\sum\limits_{o\in S(n_v)}
\prod\limits_{i=1}\limits^{k-1}
\sum\limits_{I_i\in\mathbb{N}^*}
D^{I_i\cdot C(o^{-1}([i]))}\\
&=
B^{n_e}\cdot 
\sum\limits_{o\in S(n_v)}
\prod\limits_{i=1}\limits^{k-1}
\frac{D^{C(o^{-1}([i]))}}{1-D^{C(o^{-1}([i]))}}.
\end{align}

\end{proof}

Now we take offset edges into account. Since the offset graph is acyclic (\cref{prop:acyclic}), we relabel the vertices so that for all oriented offset edge $(v_i,v_j)$ with $i<j$, the edge is offset toward $j$. In other words, vertices are relabeled according to a linear extension of the partial order induced by the offset graph.

Consider an offset edge $e=(v_i,v_j)\in\mathcal{O}$ with $i<j$. We say that $e$ is a \emph{tie} (resp.~an \emph{inversion}, resp.~an \emph{anti-inversion}) if $o(i)=o(j)$ (resp.~$o(i)>o(j)$, resp.~$o(i)<o(j)$). We call $n_t$ (resp.~$n_i$, resp.~$n_a$) the number of ties (resp.~inversions, resp.~anti-inversions). Whenever it is necessary, we specify which surjection is concerned by writing $n_t(o)$ for instance.

\begin{lem}\label{lem:withOffset}
For any unlabeled scheme $s\in\mathcal{S}$, we have:
\begin{equation}
R^b_s=
B^{n_e}\cdot 
\sum\limits_{o\in S(n_v)}
\left(\prod\limits_{i=1}\limits^{k-1}\Phi_o(i)\right)\cdot
D^{n_t(o)+n_a(o)-n_i(o)}.
\end{equation}
\end{lem}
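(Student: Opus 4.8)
The plan is to modify the computation of Lemma~\ref{lem:noOffset} so as to incorporate the contributions of the offset edges, which were previously ignored (the computation was done under the hypothesis that there were none). The starting point is the same formula
\begin{equation}
R^b_s=
B^{n_e}\cdot
\sum\limits_{o\in S(n_v)}
\sum\limits_{0=h_1<\cdots<h_{k}}
\prod\limits_{(v_i,v_j)\in\mathcal{E}(s)}
D^{\delta(h_{o(i)},h_{o(j)})},
\end{equation}
where the exponent $\delta$ attached to an edge records the actual height difference between its two extremities. The key point is that the height difference is \emph{not} $|h_{o(i)}-h_{o(j)}|$ for an offset edge: because an offset edge $(v_i,v_j)$ toward $v_j$ forces the corner labels on the $v_j$-side to be $12$ while those on the $v_i$-side are $01$, the actual drop along the branch realizing this edge is shifted by $1$ with respect to the difference of vertex-labels. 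Concretely, I expect the contribution of an offset edge toward $v_j$ to be $B\cdot D^{h_j-h_i+1}$ when $h_j\geq h_i$ and $B\cdot D^{h_i-h_j-1}$ when $h_j<h_i$, i.e. the exponent is the signed height difference $h_{o(j)}-h_{o(i)}$ corrected by $+1$ in absolute value, which matches exactly the combinatorics of Motzkin paths from height $i$ to height $j$ passing through the ``$12$'' offset.

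First I would fix a surjection $o\in S(n_v)$ and, exactly as in the proof of Lemma~\ref{lem:noOffset}, apply the change of variables $I_i=h_{i+1}-h_i\in\mathbb{N}^*$ for $1\le i\le k-1$. Under this substitution, a level edge $(v_i,v_j)$ still contributes $D^{I_{\min}+\cdots+I_{\max-1}}$ exactly as before, so the level edges reassemble into the product $\prod_{i=1}^{k-1}\Phi_o(i)$ after summing the geometric series in each $I_i$ over $\mathbb{N}^*$ — provided that the sum $\sum_{i}I_i\cdot C(o^{-1}([i]))$ is taken with the cut $C$ of the \emph{whole} scheme, which already counts offset edges. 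The point is then to check that the residual contribution of each offset edge, after extracting its ``level-edge part'' already absorbed into the cut exponents, is a single power of $D$ independent of the $I_i$'s. For an offset edge that is a tie ($o(i)=o(j)$) its two endpoints sit at the same height, so its contribution is $B\cdot D^{1}$, i.e. an extra factor $D^{1}$. For an anti-inversion ($o(i)<o(j)$, so $h_{o(i)}<h_{o(j)}$, consistent with the orientation of the offset), the exponent $h_{o(j)}-h_{o(i)}+1$ overshoots the ``level'' value $h_{o(j)}-h_{o(i)}$ (already counted in the cut) by exactly $+1$, giving an extra $D^{1}$. For an inversion ($o(i)>o(j)$, so $h_{o(i)}>h_{o(j)}$), the actual exponent is $h_{o(i)}-h_{o(j)}-1$, which undershoots the ``level'' value $h_{o(i)}-h_{o(j)}$ by $1$, giving an extra factor $D^{-1}$. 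Summing these corrections over all offset edges produces the global extra factor $D^{n_t(o)+n_a(o)-n_i(o)}$, and pulling it out of the sum over $I_1,\dots,I_{k-1}$ (it does not depend on them) leaves precisely $B^{n_e}\cdot\sum_{o\in S(n_v)}\bigl(\prod_{i=1}^{k-1}\Phi_o(i)\bigr)\cdot D^{n_t(o)+n_a(o)-n_i(o)}$, which is the claimed formula.

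The bookkeeping subtlety — and the step I expect to be the main obstacle — is making the ``extraction of the level part'' rigorous without double counting. The cleanest way is probably not to subtract and re-add, but to recompute the full exponent of $D$ for a fixed $o$ directly: for each edge (level or offset) write its exponent as a non-negative integer combination of $I_1,\dots,I_{k-1}$ plus a constant ($0$ for level edges, $+1$ for ties and anti-inversions, $-1$ for inversions), where the linear part is governed by the cuts $C(o^{-1}([i]))$ — here one must verify that an inversion offset edge, whose linear part is $I_{o(j)}+\cdots+I_{o(i)-1}$ with $o(j)<o(i)$, indeed contributes to the cut $C(o^{-1}([m]))$ for exactly those $m$ with $o(j)\le m<o(i)$, i.e. that the cut function of the scheme, which is orientation-blind, picks up offset edges correctly regardless of the direction of the inversion. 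Once one confirms that the total linear part is $\sum_{i=1}^{k-1}I_i\cdot C(o^{-1}([i]))$ exactly (summed over \emph{all} edges, with the scheme's cut), the sum over $I_i\in\mathbb{N}^*$ of geometric series factorizes as in Lemma~\ref{lem:noOffset} into $\prod_{i=1}^{k-1}\Phi_o(i)$, and the constant part factors out as $D^{n_t+n_a-n_i}$. A secondary point to double-check is the boundary/normalization convention (lowest height equal to $0$ rather than the root having label $01$), but since this convention was already adopted before Lemma~\ref{lem:noOffset} and offset corrections are purely local to each edge, it causes no additional trouble. The only genuinely new input beyond Lemma~\ref{lem:noOffset} is the $\pm1$ height shift carried by an offset edge, which comes from the $0121$ versus $0101$ dichotomy of scheme vertices and the definition of ``offset toward'' recalled in Subsection~\ref{subs:offset}; everything else is the same geometric-series manipulation.
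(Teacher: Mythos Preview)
Your proposal is correct and follows essentially the same approach as the paper: compute as if all edges were level (yielding the formula of \cref{lem:noOffset}), then correct by a factor of $D^{+1}$ for each tie and anti-inversion and $D^{-1}$ for each inversion, collecting into $D^{n_t+n_a-n_i}$. The paper's proof is much terser---it simply states the three corrections and concludes---whereas you additionally verify that the linear part of each offset edge's exponent in the $I_i$'s still matches the orientation-blind cut count, which is a worthwhile sanity check but not a new idea.
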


\begin{proof}
Consider the formula given in \cref{lem:noOffset}. To take offset edges into account, we need to change slightly the weight of each offset edge $e$.
If $e$ is a tie, the corresponding branch was given weight $B$ instead of $B\cdot D$. If $e$ is an inversion, the corresponding branch was given weight $B\cdot D^{h_{o(i)}-h_{o(j)}}$ instead of $B\cdot D^{h_{o(i)}-h_{o(j)}-1}$. If $e$ is an anti-inversion, the corresponding branch was given weight $B\cdot D^{h_{o(j)}-h_{o(i)}}$ instead of $B\cdot D^{h_{o(j)}+1-h_{o(i)}}$. To take these defects into account we therefore need to multiply by a factor depending on the number of inversions, ties, and anti-inversions. This leads to \cref{lem:withOffset}.
\end{proof}

\subsection[Proof of the symmetry]{Proof of \cref{thm:sym}}

We start from the formula of \cref{lem:withOffset} and use the fact that $\transpose{\Phi_o(i)}=-(1+\Phi_o(i))$. Recall that $B$ is asymmetric. Therefore,

\begin{align}
\transpose{R^b_s}&=\transpose{B^{n_e}}\cdot 
\sum\limits_{o\in S(n_v)}
\left(\prod\limits_{i=1}\limits^{k-1}\transpose{\Phi_o(i)}\right)\cdot
D^{-n_t-n_a+n_i}\\
&=(-1)^{n_e}\cdot B^{n_e} \cdot 
\sum\limits_{o\in S(n_v)}
\left((-1)^{k-1}\cdot
\prod\limits_{i=1}\limits^{k-1}(1+\Phi_o(i))\right)\cdot
D^{-n_t-n_a+n_i}.
\end{align}

Given two surjections $o$ and $p$, we say that $o$ \emph{refines} $p$ and $p$ \emph{coarsens} $o$, and write $\refine{o}{p}$, if: \begin{equation}\forall x,y; \quad o(x)=o(y)\Rightarrow p(x)=p(y).\end{equation}
This means equivalently that the partial order on vertices induced by $o$ is an extension of the one induced by $p$. 

A term of the development of the product $\prod_{i=1}^{k(o)-1}(1+\Phi_o(i))$ corresponds to a permutation $p$ that coarsens~$o$; indeed, for each $i$, we have to choose $1$ or $\Phi_o(i)$ in the product, which corresponds to choosing whether or not to merge $o^{-1}(i)$ and $o^{-1}(i+1)$. Therefore, $\prod_{i=1}^{k(o)-1}(1+\Phi_o(i))=\sum_{\coarsen{p}{o}}\prod_{i=1}^{k(p)-1}\Phi_p(i).$

Hence we can rewrite the previous expression by first developing the product, and then interverting the two summations:
\begin{align}
\transpose{R^b_s}
&=(-1)^{n_e}\cdot B^{n_e} \cdot 
\sum\limits_{o\in S(n_v)}
\left((-1)^{k(o)-1}\cdot
\sum\limits_{\coarsen{p}{o}}
\prod\limits_{i=1}\limits^{k(p)-1}\Phi_p(i)\right)\cdot
D^{-n_t(o)-n_a(o)+n_i(o)}
\\
&=(-1)^{n_e}\cdot B^{n_e} \cdot 
\sum\limits_{p\in S(n_v)}
\prod\limits_{i=1}\limits^{k(p)-1}\Phi_p(i)\cdot
\sum\limits_{\refine{o}{p}}\left((-1)^{k(o)-1}\cdot
D^{-n_t(o)-n_a(o)+n_i(o)}\right).
\end{align}

The \emph{reverse} $\overline{p}$ of a surjection $p$ is defined as follows: $\overline{p}(i)=k(p)+1-p(i)$. 
Since $C(\mathcal{S})=C(\mathcal{S}^\complement)$,  $\prod_{i=1}^{k(p)-1}\Phi_p(i)=\prod_{i=1}^{k(\overline{p})-1}\Phi_{\overline{p}}(i)$. As a consequence:

\begin{equation}
\transpose{R^b_s}
=(-1)^{n_e}\cdot B^{n_e} \cdot 
\sum\limits_{p\in S(n_v)}
\prod\limits_{i=1}\limits^{k(\overline{p})-1}\Phi_{\overline{p}}(i)\cdot
\sum\limits_{\refine{o}{p}}\left((-1)^{k(o)-1}\cdot
D^{-n_t(o)-n_a(o)+n_i(o)}\right)
\end{equation}

We apply the change of variable $p\rightarrow \overline{p}$:

\begin{equation}
\transpose{R^b_s}=(-1)^{n_e}\cdot B^{n_e} \cdot 
\sum\limits_{p\in S(n_v)}
\prod\limits_{i=1}\limits^{k(p)-1}\Phi_p(i)\cdot
\sum\limits_{\refine{o}{\overline{p}}}\left((-1)^{k(o)-1}\cdot
D^{-n_t(o)-n_a(o)+n_i(o)}\right).
\end{equation}

To conclude that $R^b_s$ is symmetric, it is now enough to prove that for any offset graph, and for any surjection $p$, the following lemma holds:

\begin{lem}\label{lem:sumCancel}
For any surjection $p$:
\begin{equation}
\sum_{\refine{o}{\overline{p}}}\left((-1)^{k(o)-1}\cdot
D^{-n_t(o)-n_a(o)+n_i(o)}\right)=(-1)^{n_e}\cdot D^{n_t(p)+n_a(p)-n_i(p)}.
\end{equation}
\end{lem}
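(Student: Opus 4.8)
The plan is to prove \cref{lem:sumCancel} by induction on the number of offset edges, using the acyclicity of the offset graph (\cref{prop:acyclic}) to peel off a well-chosen vertex or edge. First I would fix the surjection $\overline{p}$ and understand the left-hand sum as running over all refinements $o$ of $\overline{p}$; since $o$ refines $\overline{p}$, the blocks of $o$ partition the blocks of $\overline{p}$, so choosing $o$ amounts to choosing, inside each block of $\overline{p}$, a linear preorder on that block's vertices. Because the quantities $n_t(o), n_a(o), n_i(o)$ only depend on the relative order that $o$ imposes on the two endpoints of each offset edge, the sum factorizes, to a large extent, over the connected components of the offset graph restricted to a single block of $\overline{p}$: offset edges joining two different blocks of $\overline{p}$ have their status (tie/inversion/anti-inversion) already determined by $\overline{p}$ alone, and contribute a fixed global factor that I would want to match against $(-1)^{n_e}D^{n_t(p)+n_a(p)-n_i(p)}$ on the right.

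**The core computation.** The heart of the matter is then a purely local identity: given a single block $\mathcal B$ of $\overline{p}$ (all of whose vertices are tied under $\overline{p}$, so all offset edges inside $\mathcal B$ are ties for $\overline{p}$), and given the induced acyclic offset sub-DAG on $\mathcal B$, one must show
\begin{equation}
\sum_{o|_{\mathcal B}} (-1)^{k(o|_{\mathcal B})-1}\, D^{\,n_i(o|_{\mathcal B}) - n_a(o|_{\mathcal B}) - n_t(o|_{\mathcal B})} = (-1)^{e(\mathcal B)}\, D^{\,e(\mathcal B)},
\end{equation}
where $e(\mathcal B)$ is the number of offset edges inside $\mathcal B$, the sum is over all linear preorders $o|_{\mathcal B}$ on $\mathcal B$ (equivalently, all ordered set partitions of $\mathcal B$), and the sign $(-1)^{k-1}$ records the number of parts. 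I would prove this by induction on $|\mathcal B|$: pick a \emph{source} $v$ of the offset DAG on $\mathcal B$ (which exists by acyclicity), and split the sum according to which part of $o|_{\mathcal B}$ contains $v$ and whether that part is a singleton. When $v$ sits strictly below every other vertex, each offset edge out of $v$ is an anti-inversion (contributing $D^{-1}$); when $v$ is tied with some neighbours, those edges become ties (contributing $D^{-1}$ as well after the $n_t$ sign); this is exactly where the orientation of offset edges away from the source makes every incident edge contribute the \emph{same} power $D^{-1}$, so the contribution of $v$'s star is uniform across the cases being merged, and a standard telescoping/inclusion–exclusion over "is $v$ alone in its part or merged upward" collapses the alternating sum, multiplying the inductive answer for $\mathcal B \setminus \{v\}$ by exactly $-D$ per edge incident to $v$. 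Summing the per-block exponents and signs reproduces $(-1)^{n_e} D^{n_t(p)+n_a(p)-n_i(p)}$ once one checks that the inter-block offset edges of $\overline{p}$ are precisely the edges that are \emph{not} ties for $\overline{p}$, and that reversing $p \mapsto \overline{p}$ swaps inversions with anti-inversions while fixing ties, which accounts for the sign pattern on the right-hand side.

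**Anticipated obstacle.** The main difficulty I expect is bookkeeping the three statistics $n_t, n_a, n_i$ simultaneously and correctly reconciling the left-hand side (whose exponent is $-n_t(o)-n_a(o)+n_i(o)$, i.e.\ the \emph{transposed} count) with the right-hand side (exponent $+n_t(p)+n_a(p)-n_i(p)$, the \emph{untransposed} count for $p$, not $\overline{p}$). The reversal $\overline{p}$ is doing exactly the job of flipping inversions and anti-inversions, so one must be careful that the refinements $o$ of $\overline{p}$ biject, via $o \mapsto \overline{o}$, with refinements of $p$ in a way compatible with $k(o) = k(\overline{o})$ and with $n_i(\overline{o}) = n_a(o)$, $n_a(\overline{o}) = n_i(o)$, $n_t(\overline{o}) = n_t(o)$; once that symmetry is in hand the local identity above can be stated either way and the induction goes through. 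A secondary subtlety is making sure the "source vertex" argument is legitimate even when the offset graph on a block is disconnected — but disconnected components just multiply, so one reduces to the connected case, picks a global source there, and proceeds. Given all this, I would expect the proof to be a clean but somewhat intricate induction, with the acyclicity of \cref{prop:acyclic} being used exactly once, crucially, to guarantee a source exists at every step.
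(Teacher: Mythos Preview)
Your block factorization over $\overline{p}$ is fine, but the local identity you write is off: on a block $\mathcal B$ the right-hand side should be $(-1)^{|\mathcal B|-1}D^{e(\mathcal B)}$, not $(-1)^{e(\mathcal B)}D^{e(\mathcal B)}$ --- take $\mathcal B=\{v_1,v_2\}$ with no offset edge, where the three ordered partitions give $1-1-1=-1$. This same two-vertex example kills your claim that ``disconnected components just multiply'': ordered partitions of $A\sqcup B$ are interleavings, not pairs, so the alternating sum over $\{v_1,v_2\}$ is $-1$, not the product $1\times 1$ of the sums over $\{v_1\}$ and $\{v_2\}$. A source-removal induction \emph{can} be made to work once you drop that claim and track signs correctly: the exponent-preserving involution is ``split $v$ out just \emph{below} its current part $\leftrightarrow$ merge singleton $v$ into the part \emph{above}'', whose only fixed points have $v$ as a singleton in the \emph{top} position, where every incident offset edge is an \emph{inversion} contributing $D^{+1}$; this gives the recursion $\mathrm{LHS}_{\mathcal B}=(-1)\cdot D^{d_v}\cdot\mathrm{LHS}_{\mathcal B\setminus\{v\}}$ (a single global $-1$, not one per edge), and hence the corrected local identity.

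The paper avoids this per-block bookkeeping altogether by lifting to a universal identity: it introduces a variable $X_{ij}$ for \emph{every} pair $i<j$ (not just offset pairs), sets $X(o)=\prod_{i<j}X_{ij}^{\pm1}$ according to whether $o(i)>o(j)$ or $o(i)\le o(j)$, and proves $X(p)^{-1}=\sum_{o\preceq\overline{p}}(-1)^{k(o)-n_v}X(o)$ (\cref{prop:sumX}) via a single sign-reversing involution on ``admissible ascents'' of the unique permutation sharing $o$'s monomial (\cref{lem:surj1,lem:surj2}). Specializing $X_{ij}\mapsto D$ on offset pairs and $X_{ij}\mapsto 1$ elsewhere then gives \cref{lem:sumCancel} in one line, with Euler's formula converting $(-1)^{n_v-1}$ to $(-1)^{n_e}$. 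In particular, acyclicity (\cref{prop:acyclic}) is used only to choose a labeling with every offset edge going from a smaller to a larger index; the cancellation itself needs no graph hypothesis.
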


The general case of \cref{lem:sumCancel} will be proved in \cref{subs:surj}.

\begin{rem}
Note that, when the offset graph is empty, this formula can be obtained as a direct byproduct of the Euler-Poincaré formula applied to the permutahedron.
The $n$-permutahedron (see \cref{fig:permutahedra}) is a polytope defined as the convex hull of the set of permutations: 
${Perm_n=Conv\{(\sigma_1,\sigma_2 \cdots \sigma_n) \text{ such that } \sigma\in \mathfrak{S}_n\}},$ where $\mathfrak{S}_n$ is the set of permutations of size $n$. 
The $n$-permutahedron has dimension $n-1$. 
The $k$-dimensional faces of the $n$-permutahedron correspond to surjections from $[n]$ to $[n-k]$.
If $\refine{o}{p}$, then the face corresponding to $o$ is included into that of $p$.

The Euler-Poincaré formula states that, if $f_k$ denotes the number of $i$-dimensional faces of a polytope, then: $\sum_{k\geq 0}(-1)^{k}f_k=0.$ 
A face of a polytope is also a polytope; in the case of the permutahedron, it is even the Cartesian product of lower-dimensional permutahedra. 
Therefore, the Euler-Poincaré formula applied to the face corresponding to $\overline{p}$ implies: $\sum_{\refine{o}{\overline{p}}}(-1)^{k(o)-1}=(-1)^{n_e}.$
\end{rem}

\begin{figure}
  \centering
  \begin{subfigure}[c]{.46\linewidth}
   \centering
   \includegraphics{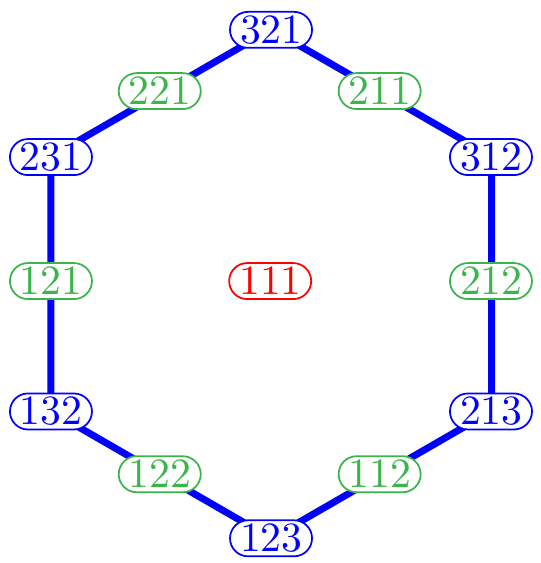}
  \end{subfigure}
  \begin{subfigure}[c]{.46\linewidth}
   \centering
   \includegraphics[scale=0.8]{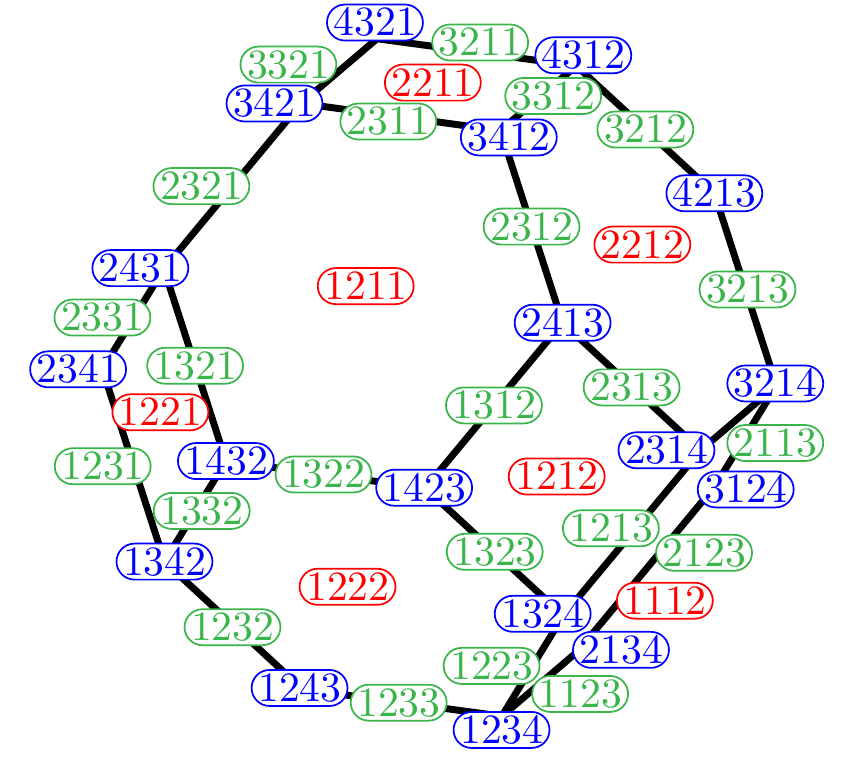}
  \end{subfigure}
  \caption{\label{fig:permutahedra}Permutahedra of dimension $2$ and $3$. Faces of dimension $0$, $1$, and $2$ are represented in blue, green, and red.}
\end{figure}

\subsection[Proof of the sum cancellation]{Proof of \cref{lem:sumCancel}}
\label{subs:surj}

We now consider a more general context than above, with a variable $X_{ij}$ for all $i<j$. For a surjection $p$ of size $n_v$ we define a monomial $X(p)=\prod_{i<j}(\delta_{p(i)>p(j)}X_{ij}+ \delta_{p(i)\leq p(j)}X_{ij}^{-1})$. 
Note that no two permutations have the same monomial.

\begin{prop}\label{prop:sumX}
The reverse of the monomial associated to an ordered partition $p$ is equal to the alternated sum of monomials associated to the ordered partitions that refine the reverse of $p$:
\begin{equation}X(p)^{-1}=\sum\limits_{\refine{o}{\overline{p}}}(-1)^{k(o)-n_v}X(o). \end{equation}
\end{prop}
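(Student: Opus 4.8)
The plan is to prove the identity by induction on the number $n_v$ of elements (equivalently, on the structure of the target partition $\overline{p}$), exploiting the multiplicative structure of the monomials $X(\cdot)$. First I would fix a surjection $p$ of size $n_v$ with $k(p)=k$, so that $\overline{p}$ is the reverse. The key observation is that a surjection $o$ refining $\overline{p}$ is obtained by choosing, independently for each block $B_\ell=\overline{p}^{-1}(\ell)$ (there are $k$ of them), an ordered partition of $B_\ell$; the resulting $o$ is then the concatenation of these ordered partitions in the order $\ell=1,\dots,k$. This gives a bijection between $\{o : \refine{o}{\overline{p}}\}$ and the product $\prod_{\ell=1}^{k}\mathrm{OP}(B_\ell)$ of sets of ordered partitions of the blocks. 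I would then check that both $(-1)^{k(o)-n_v}$ and the monomial $X(o)$ factor compatibly with this decomposition: $k(o) = \sum_\ell k(o|_{B_\ell})$, and $X(o) = \prod_\ell X(o|_{B_\ell}) \cdot \prod_{\text{pairs }i<j\text{ in distinct blocks}} (\cdots)$. For a pair $i<j$ with $i\in B_{\ell}$, $j\in B_{\ell'}$ and $\ell\neq\ell'$, the relative order of $o(i)$ and $o(j)$ is forced by $\overline{p}$ (namely by whether $\ell<\ell'$ or $\ell>\ell'$), so the corresponding factor $X_{ij}^{\pm1}$ is the same for every $o$ refining $\overline{p}$, and in fact equals the \emph{inverse} of the factor appearing in $X(p)$ for that pair, since $\overline{p}$ reverses the order of blocks relative to $p$.

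With this factorization in hand, the sum $\sum_{\refine{o}{\overline{p}}}(-1)^{k(o)-n_v}X(o)$ splits as a product over the blocks of $\overline{p}$ of sums of the form $\sum_{o'\in\mathrm{OP}(B_\ell)}(-1)^{k(o')-|B_\ell|}X(o')$, times the fixed cross-block monomial. Each factor is exactly the left-hand side of the \emph{same} identity applied to a smaller ground set with the \emph{finest} target partition (the one-block partition, whose reverse is again the one-block partition). So the problem reduces to the base case: proving $\sum_{o'\in\mathrm{OP}([m])}(-1)^{k(o')-m}X(o') = \prod_{i<j}X_{ij}^{-1}$, i.e.\ the case where $p$ is the constant surjection. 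This is the combinatorial heart: here $o'$ ranges over \emph{all} ordered set partitions of $[m]$, and one wants the alternating sum weighted by the monomials to collapse to the single ``all-inverses'' monomial. I expect to prove this base case by a sign-reversing involution — pair up ordered partitions that differ by merging/splitting the last two blocks (or an innermost adjacent pair chosen by a fixed rule), noting that such a merge changes $k(o')$ by one, hence flips the sign, while the monomial $X(o')$ is unchanged exactly when the merged blocks were already ``in order'' — so the only unmatched term is the one where every adjacent pair is an inversion, forcing $o'$ to be the fully-reversed linear order $(m, m{-}1,\dots,1)$, whose monomial is $\prod_{i<j}X_{ij}^{-1}$.

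The main obstacle I anticipate is making the sign-reversing involution in the base case genuinely well-defined and fixed-point-free except at the single surviving term: one must choose canonically \emph{which} adjacent pair of blocks to merge or split, and verify that the monomial $X(o')$ is invariant under that move (which requires the two swapped elements—or the pair of blocks being merged—to already be in decreasing order of $o'$-value for the move to be admissible, while the reverse move on a block of size $\geq 2$ is always admissible). A clean way is to scan blocks from the right and act on the first place (reading right to left) where either the current block has size $\geq 2$ (split off its largest element as a new singleton block to its right) or the current singleton can be merged into the block on its right without changing any relative order; if no such place exists the ordered partition must be the strictly decreasing sequence of singletons. Once the base case is secured, the inductive/factorization step is routine bookkeeping of signs and monomials, and Lemma \ref{lem:sumCancel} follows by specializing $X_{ij}$ to the appropriate monomials in $D$ (so that $X(o)$ becomes $D^{n_t(o)+n_a(o)-n_i(o)}$ up to the global sign $(-1)^{n_e}$), which is exactly the statement to be proved.
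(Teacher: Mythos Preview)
Your overall strategy is sound and close in spirit to the paper's argument. The factorisation over the blocks of $\overline{p}$ is correct: the cross-block factors of $X(o)$ are independent of $o$ and equal to the inverses of the corresponding factors of $X(p)$, while $k(o)$ and the within-block part of $X(o)$ split multiplicatively, so the identity reduces to the case where $p$ (equivalently $\overline{p}$) is the constant surjection. The paper does not factor; it treats an arbitrary $p$ in one step (\cref{lem:surj1,lem:surj2}), but the mechanism is the same as yours: it groups the surjections refining $p$ by their monomial, observes that each class is a Boolean interval (indexed by which ``admissible'' adjacent positions have been merged) and hence has vanishing alternating sum unless it is a singleton, and identifies the unique singleton class as a specific permutation $r(p)$ whose monomial is then checked to equal $X(\overline p)^{-1}$.

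Two concrete issues in your write-up. First, a sign slip: for the constant surjection $p$ one has $X(p)=\prod_{i<j}X_{ij}^{-1}$, so $X(p)^{-1}=\prod_{i<j}X_{ij}$; the surviving decreasing permutation $(m,m{-}1,\dots,1)$ also has monomial $\prod_{i<j}X_{ij}$, not $\prod_{i<j}X_{ij}^{-1}$ as you wrote in two places. Second, and more substantive, the explicit involution you propose is not an involution. Take $m=3$ and $o'=(\{1,2\},\{3\})$: scanning right to left, the rightmost block $\{3\}$ has no right neighbour, and the block $\{1,2\}$ has size $2$, so you split off its maximum to obtain $(\{1\},\{2\},\{3\})$. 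Applying your rule again, the singleton $\{2\}$ has right neighbour $\{3\}$ with $2<3$, so you merge and obtain $(\{1\},\{2,3\})\neq(\{1,2\},\{3\})$. The defect is a direction mismatch: ``split off the maximum as a new block to the right'' is undone by ``merge with the \emph{left} neighbour'', not the right one. Either reverse the merge direction, or scan left to right and split off the minimum to the left; both repairs give a genuine involution with the correct fixed point. The paper avoids the issue altogether by using the Boolean-interval cancellation instead of an explicit pairing.
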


\cref{prop:sumX} is a direct consequence of \cref{lem:surj1,lem:surj2}. We first use \cref{prop:sumX} to prove \cref{lem:sumCancel}, and then state and prove \cref{lem:surj1,lem:surj2}.

\begin{proof}[Proof of \cref{lem:sumCancel}]
Recall that the vertices have been relabeled in an order which is a linear extension of the offset graph, and by consequence any oriented edge $(i,j)$ of the offset graph satisfies $i<j$.

We use \cref{prop:sumX}, and specialize $X_{ij}$ to $D$ to a power equal to the number of oriented offset edges $(i,j)$. We obtain:

\begin{align}
X(p)^{-1}&=\sum\limits_{\refine{o}{\overline{p}}}(-1)^{k(o)-n_v}X(o)\\
D^{n_t(p)+n_a(p)-n_i(p)}
&=\sum\limits_{\refine{o}{\overline{p}}}(-1)^{k(o)-n_v}\cdot
D^{-n_t(o)-n_a(o)+n_i(o)}.
\end{align}

The Euler formula applied to a scheme states that $n_v-n_e=1-2g$ and by consequence $(-1)^{n_v}=(-1)^{n_e+1}$, which conclude the proof.
\end{proof}

For a given surjection $p$, we define a \emph{canonical permutation} $r(p)$ as follows: 
\[r(p)(i)=|\{j\text{ such that }p(j)<p(i)\}|+|\{j\text{ such that }p(j)=p(i)\text{ and }j\geq i\}|.
\]
This permutation is the linear extension of the partial order induced by $p$, such that each time a choice is to be made, elements are taken in decreasing order.

\begin{lem}\label{lem:surj1}
The alternated sum of monomials of surjections that refine a given surjection $p$ is equal to the monomial of the canonical permutation of $p$: \begin{equation} \sum\limits_{\refine{o}{p}}(-1)^{k(o)}X(o)=(-1)^{n_v}X(r(p)).\end{equation}
\end{lem}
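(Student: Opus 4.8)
I would prove \cref{lem:surj1} by induction on the number of "mergeable gaps" of $p$, i.e. on the quantity $k(p)-1$ minus the number of indices $i \in [k(p)-1]$ for which $o^{-1}(i)$ and $o^{-1}(i+1)$ are "forced apart" — but a cleaner route is induction on $n_v - k(p)$, the number of strict inequalities collapsed in $p$, with the base case being $p$ already a permutation (so $p = r(p)$, $X(p) = X(r(p))$, and the sum on the left has the single term $o = p$, giving $(-1)^{n_v}X(r(p))$ immediately). For the inductive step I would isolate a single adjacent pair of fibers of $p$ that can be split, set up a sign-reversing involution, or — more in the spirit of the permutahedron remark — observe that the surjections $o$ refining $p$ are exactly the faces of the product-of-permutahedra $\prod_\ell Perm_{|p^{-1}(\ell)|}$, and group them by what happens in one chosen factor.

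**Key steps.** First I would reduce to the case where $p$ itself differs from a permutation in exactly one fiber, by a telescoping/peeling argument: if $p$ has several non-singleton fibers, condition on the restriction of $o$ to all but the largest fiber and factor the sum. Second, for the one-fiber case, say $p^{-1}(\ell) = \{a_1, \dots, a_m\}$, the refinements $o$ of $p$ that agree with $p$ elsewhere correspond to ordered set partitions of $\{a_1,\dots,a_m\}$; summing $(-1)^{(\text{number of blocks})}X(o)$ over these is exactly the inclusion–exclusion identity that produces the single linear order in which the $a_i$ are listed in \emph{decreasing} index order (because within a merged block $o$ assigns equal values, contributing the $X_{ij}^{-1}$ factors, and the alternating sum kills every partition with more than one... no — kills every ordering except the canonical one). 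Concretely, the sum over ordered set partitions of an $m$-set, weighted by $(-1)^{\#\text{blocks}}$ times the appropriate monomial, telescopes via the standard identity $\sum_{\text{ordered partitions}} (-1)^{\#\text{blocks}} = (-1)^m \cdot (\text{number of linear orders compatible with the induced constraints})$, and here the monomial structure forces exactly one surviving term, namely $r(p)$. Third, I would check that $r(p)$ as defined — taking tied elements in decreasing index order — is precisely the linear extension picked out by this alternating sum, which is a direct matching of definitions once the one-fiber computation is done.

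**The main obstacle.** The delicate point is the bookkeeping of the monomials $X(o)$ under merging: when two adjacent fibers $o^{-1}(i), o^{-1}(i+1)$ are merged, every pair $(u,v)$ with $u$ in one fiber and $v$ in the other flips its contribution from $X_{uv}$ (an inversion, since before merging the larger-indexed... ) to $X_{uv}^{-1}$, and one must verify that the alternating sum over all such merges of the one-fiber sub-poset collapses to exactly the canonical permutation's monomial and nothing else — i.e. that all the "partial merge" monomials cancel in pairs. This is the heart of the lemma; I expect it to follow from a sign-reversing involution on ordered set partitions of the fiber with $\geq 2$ splittable adjacent blocks (merge/split the first such pair), exactly as in the Euler–Poincaré argument sketched in the remark for the empty-offset-graph case, but now carrying the monomial weights along and checking the involution preserves the monomial while flipping the sign. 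Once that involution is exhibited, the only fixed point is the fully-split ordered partition refined all the way to $r(p)$, and the lemma follows; combining with the analogous statement \cref{lem:surj2} then yields \cref{prop:sumX} and hence \cref{lem:sumCancel}.
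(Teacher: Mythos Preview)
Your endgame is right — a sign-reversing involution whose unique fixed point is $r(p)$ — but the mechanism you sketch is broken, and the induction/factoring scaffolding is unnecessary.

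The concrete error is in the sentence ``when two adjacent fibers $o^{-1}(i), o^{-1}(i+1)$ are merged, every pair $(u,v)$ with $u$ in one fiber and $v$ in the other flips its contribution from $X_{uv}$ to $X_{uv}^{-1}$''. This is false. For a pair $a<b$ with $a\in o^{-1}(i)$ and $b\in o^{-1}(i+1)$ you had $o(a)<o(b)$ before and $o(a)=o(b)$ after, so the factor stays $X_{ab}^{-1}$; only pairs $a<b$ with $a\in o^{-1}(i+1)$ and $b\in o^{-1}(i)$ actually flip (from $X_{ab}$ to $X_{ab}^{-1}$). Consequently your involution ``merge/split the first adjacent pair'' does \emph{not} preserve the monomial in general, so it cannot be the weight-preserving sign-reversing involution you need. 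You acknowledge the monomial changes, then immediately ask the involution to preserve it; those two claims contradict each other.

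The paper sidesteps this entirely by grouping first and cancelling second. It partitions the refinements of $p$ according to their monomial $X(o)$. Each class contains exactly one permutation $q$, and the whole class is obtained from $q$ by merging an arbitrary subset of its \emph{admissible} consecutive value-pairs: those $(i,i+1)$ with $q^{-1}(i)<q^{-1}(i+1)$ (so the merge preserves $X$) and with $q^{-1}(i),q^{-1}(i+1)$ in the same $p$-fibre (so the merge still refines $p$). The class is then a Boolean lattice, and the alternating sum over it vanishes unless there are no admissible pairs. A permutation with no admissible pair is exactly one that orders each $p$-fibre by decreasing index, i.e.\ $r(p)$. No induction, no per-fibre factorisation; the argument is three lines once you group by monomial. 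If you want to rescue your approach, the fix is precisely this: restrict ``splittable'' to mean ``admissible'' in the sense above, and then your involution does preserve the monomial — but at that point you have reproduced the paper's proof.
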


\begin{proof}
We group all surjections refining $p$ that have the same monomial. For any monomial, there is exactly one such surjection that is a permutation. 

Let $q$ be a permutation that refines $p$.
An \emph{admissible ascent} of a permutation $q$ is a pair of numbers $(i,i+1)$ such that $q(i+1)<q(i)$ and $p(i+1)=p(i)$. 
The surjections that refine $p$ and have same monomial as $q$ are those which can be obtained from $q$ by giving the same value to the pairs of successive numbers corresponding to some of its \emph{admissible ascents}. The set of such surjections is therefore homeomorphic to the set of subsets of admissible ascents. 

Consequently, except if $q$ has no admissible ascent, the alternated sum of the ordered partitions that have the same monomial as $q$ is $0$.
The only permutation that has no admissible ascent is $r(p)$, and $k(r(p))=n_v$ (since $r(p)$ is a permutation).
\end{proof}

\begin{lem}\label{lem:surj2}
For any surjection $p$, the following equality holds: \begin{equation} X(r(p))=X(\overline{p})^{-1}. \end{equation}
\end{lem}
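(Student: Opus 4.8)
Both sides are Laurent monomials in the variables $X_{ij}$ ($i<j$), so it suffices to compare the exponent of each $X_{ij}$ on the two sides. For a permutation (or surjection) $q$, the exponent of $X_{ij}$ in $X(q)$ is $+1$ when $q(i)>q(j)$ and $-1$ when $q(i)\le q(j)$. Since $\overline{p}(i)>\overline{p}(j)\iff p(i)<p(j)$ and $\overline{p}(i)=\overline{p}(j)\iff p(i)=p(j)$, the exponent of $X_{ij}$ in $X(\overline{p})^{-1}$ is $+1$ when $p(i)\ge p(j)$ and $-1$ when $p(i)<p(j)$. On the other side, $r(p)$ is a genuine permutation, so it has no ties, and the exponent of $X_{ij}$ in $X(r(p))$ is $+1$ when $r(p)(i)>r(p)(j)$ and $-1$ otherwise. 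Hence the whole lemma reduces to the single combinatorial claim:
\[ \text{for all } i<j:\qquad r(p)(i)>r(p)(j) \iff p(i)\ge p(j). \]

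\textbf{Proving the reduced claim.}
I would fix $i<j$ and split into the three cases $p(i)>p(j)$, $p(i)=p(j)$, $p(i)<p(j)$, using directly the defining formula
\[ r(p)(m)=\bigl|\{k:p(k)<p(m)\}\bigr|+\bigl|\{k\ge m:p(k)=p(m)\}\bigr|. \]
If $p(i)=p(j)$, the first summands agree, while $\{k\ge i:p(k)=p(i)\}$ strictly contains $\{k\ge j:p(k)=p(j)\}$ (it additionally contains $i$, since $i<j$ and $p(i)=p(j)$), so $r(p)(i)>r(p)(j)$, matching $p(i)\ge p(j)$. If $p(i)>p(j)$, then $|\{k:p(k)<p(i)\}|\ge|\{k:p(k)\le p(j)\}|\ge r(p)(j)$, and moreover $|\{k\ge i:p(k)=p(i)\}|\ge 1$ because $i$ itself lies in this set; combining gives $r(p)(i)>r(p)(j)$, again matching. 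The case $p(i)<p(j)$ is the mirror image: $|\{k:p(k)<p(j)\}|\ge|\{k:p(k)\le p(i)\}|\ge r(p)(i)$ and $|\{k\ge j:p(k)=p(j)\}|\ge1$, whence $r(p)(j)>r(p)(i)$. This exhausts all cases and establishes the claim, hence the lemma.

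\textbf{Expected main difficulty.}
There is no deep obstacle here; the content is careful bookkeeping. The one point that needs genuine attention is getting the \emph{strict} inequalities right in the cases $p(i)>p(j)$ and $p(i)<p(j)$: one must notice that the index being counted ($i$ in its own equivalence class, respectively $j$ in its own) always contributes a guaranteed $+1$ to the relevant term, which is exactly what upgrades the weak inequality between block-prefix counts to a strict inequality between the values of $r(p)$. Once that is noted, the three-case verification is routine. I would also make explicit at the outset that $r(p)$ is indeed a permutation (on each fibre of $p$ it assigns distinct values, decreasing in the index), since the absence of ties for $r(p)$ is used to match exponents $\pm1$ with those of $X(\overline p)^{-1}$.
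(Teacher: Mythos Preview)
Your proof is correct and follows essentially the same approach as the paper: compare the exponent of each $X_{ij}$ on both sides, reducing the lemma to the claim that for $i<j$ one has $r(p)(i)>r(p)(j)\iff p(i)\ge p(j)$. The paper's own proof simply asserts the resulting exponents in the two cases $p(i)>p(j)$ and $p(i)\le p(j)$ (and in fact states them with both signs swapped, though consistently), whereas you actually carry out the three-case verification from the definition of $r(p)$; your write-up is thus a strict elaboration of the paper's argument.
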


\begin{proof}
For $i<j$: 

\begin{compactitem}
\item If $p(i)>p(j)$, then the power of $X_{ij}$ in $X(r(p))$ is $-1$ and the power of $X_{ij}$ in $X(\overline{p})$ is $1$.

\item If $p(i)\leq p(j)$, then the power of $X_{ij}$ in $X(r(p))$ is $1$ and the power of $X_{ij}$ in $X(\overline{p})$ is $-1$.
\end{compactitem}
\end{proof}

\section{Opening non-bicolorable maps}
\label{sec:nonBic}

We proved in \cref{thm:bijCut} that the opening algorithm, applied to bicolorable maps with dual-geodesic orientation, is actually a bijection with a certain family of unicellular blossoming maps (namely $\mathcal{O}$).
However, if the map is non-bicolorable, it is not possible to define its dual-geodesic orientation, because some adjacent faces may be at the same distances from the root face.

In this section, we extend the bijection to general maps (not necessarily bicolorable), by considering fractional orientation, in order to deal with the case of adjacent faces with the same label. This generalization is based on the work of Bouttier, di Francesco and Guitter in \cite{BoDiGu02}, that was later revisited by Albenque and Poulhalon in \cite{AlbPou15}. 

We define the \emph{face-doubled} (resp. \emph{vertex-doubled}) version of a map $m$, denoted $m^\parallel$ (resp. $m^\brokenvert$), as the map $m$ where each edge is replaced by $2$ adjacent copies of the edge (resp. divided into $2$ parts by adding a new vertex in the midst of it). 
The faces (resp. vertices) hereby created are called \emph{edge-faces} (resp. \emph{edge-vertices}).
Note that these are dual notions, so that $\dual{(m^{\parallel})}=(\dual{m})^\brokenvert$, and that $m^\parallel$ is bicolorable, while $m^\brokenvert$ is bipartite.
A \emph{face-doubled orientation} (resp. \emph{vertex-doubled orientation}) of a map $m$ is an orientation of $m^\parallel$ (resp. $m^\brokenvert$), with the additional constraint that no edge-face (resp. edge-vertex) is clockwise (resp. a sink).

The \emph{geodesic doubled-orientation} (resp. \emph{dual-geodesic doubled-orientation}) of a map $m$ is the vertex-doubled (resp. face-doubled) orientation of $m$ corresponding to the geodesic (resp. dual-geodesic) orientation of the bipartite map $m^{\brokenvert}$ (resp. the bicolorable map $m^{\parallel}$).

These orientations of the doubled map can alternatively be seen as \emph{fractional orientations} of the original map.
This means that for each edge of the map, instead of choosing either one orientation of the edge or the other, we choose a fractional combination of the $2$. In particular, we use \emph{half-orientations}, which means that each edge is either oriented in one direction, or in both, in which case it is called \emph{bi-oriented}. 
Note that half-orientations of a map are in bijection with doubled orientations of the doubled version of the map.
A half-orientation is called \emph{bipartite} (resp. \emph{bicolorable}) if its corresponding orientation in the vertex-doubled (resp. face-doubled) map is bipartite (resp. bicolorable).

Note that the geodesic half-orientation is bipartite. It can alternatively be defined by labeling all vertices by their distance to the root, and orienting each edge according to the labels of its adjacent vertices: toward the vertex with smaller label if the labels are different, or bi-oriented if the labels are equal.

The work of Propp can perfectly be applied to orientations of the doubled version of a map. Note however that the resulting lattice will contain orientations that are not doubled orientations, but that by definition, the minimum of the lattice will necessarily correspond to a doubled orientation, which makes it possible to transcribe it back to a fractional orientation of the original map.

\begin{thm}[Propp]\label{thm:fracPropp}
The transitive closure of the vertex-push (resp. face-flip) operation endows the set of bipartite (resp. bicolorable) orientations of the vertex-doubled (resp. face-doubled) version of a fixed map with a structure of distributive lattice, whose minimum is a vertex-doubled (resp. face-doubled) orientation, namely the geodesic (resp. dual-geodesic) doubled orientation.
\end{thm}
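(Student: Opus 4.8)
The plan is to reduce everything to the results of \cref{sec:or}, applied not to $m$ but to its doubled version, which (as already observed) is a genuine bipartite, resp.\ bicolorable, map. Concretely, $m^\brokenvert$ is a bipartite map, its two colour classes being the original vertices and the edge-vertices, rooted at the root corner inherited from $m$; in particular its root vertex is an original vertex of $m$, not an edge-vertex. Likewise $m^\parallel$ is a bicolorable map. Hence Propp's theorem and its corollary from \cref{sec:or} apply verbatim to $m^\brokenvert$: the set of \emph{all} bipartite orientations of $m^\brokenvert$ — not only the vertex-doubled ones — carries a distributive lattice structure for the transitive closure of vertex-push, with minimum the geodesic orientation of $m^\brokenvert$; and dually for $m^\parallel$, face-flip, and the dual-geodesic orientation of $m^\parallel$. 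The only thing not already contained in \cref{sec:or} is that these minima happen to be \emph{doubled} orientations, so that they descend to fractional orientations of $m$; this is what remains to prove.

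For this, let $\ell$ denote the graph distance to the root in $m^\brokenvert$, and let $w$ be any edge-vertex, say the one subdividing an edge of $m$ with endpoints $u$ and $v$. In $m^\brokenvert$ the vertex $w$ has degree $2$, with neighbours exactly $u$ and $v$. A shortest path from the root to $w$ reaches $w$ through $u$ or through $v$, and conversely one reaches $w$ by going to $u$ or to $v$ and then along one more edge; therefore $\ell(w) = 1 + \min(\ell(u),\ell(v))$. In particular at least one of $u,v$ — say $u$ — satisfies $\ell(u) = \ell(w) - 1 < \ell(w)$. Since the geodesic orientation points each edge towards its endpoint of smaller label, the edge $\{w,u\}$ is oriented from $w$ to $u$, so $w$ has an outgoing edge and is not a sink. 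As $w$ was arbitrary, the geodesic orientation of $m^\brokenvert$ is a vertex-doubled orientation; by definition it is then the geodesic doubled orientation of $m$, which proves the bipartite/vertex-push half of the statement.

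For the bicolorable/face-flip half I would invoke duality rather than redo the computation: since $\dual{(m^\parallel)} = (\dual{m})^\brokenvert$, and since duality exchanges bipartite and bicolorable orientations, vertex-push and face-flip, the geodesic and dual-geodesic orientations, and — crucially — edge-vertices with edge-faces and ``is a sink'' with ``is clockwise'', the bicolorable statement for $m$ is exactly the bipartite statement already proved, applied to $\dual{m}$. (Equivalently, one applies the dual corollary of \cref{sec:or} to the bicolorable map $m^\parallel$ and checks, via the dual of the previous paragraph, that its dual-geodesic orientation has no clockwise edge-face.) The one point where care is genuinely needed — and the main potential pitfall — is that vertex-push does \emph{not} preserve the class of vertex-doubled orientations, so the lattice must really be taken on all bipartite orientations of $m^\brokenvert$; the content of the theorem is that, even so, its extremal element lies in the smaller class of doubled orientations, and this is precisely what the identity $\ell(w) = 1 + \min(\ell(u),\ell(v))$ secures.
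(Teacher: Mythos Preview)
Your proposal is correct and follows exactly the route the paper indicates in the paragraph preceding the theorem: apply the results of \cref{sec:or} to the bipartite map $m^\brokenvert$ (resp.\ bicolorable map $m^\parallel$), and then check that the resulting minimum is a doubled orientation. The paper gives no formal proof beyond the remark that this holds ``by definition''; your argument fleshes this out correctly, though note that the distance computation $\ell(w)=1+\min(\ell(u),\ell(v))$ is more than needed --- since the geodesic orientation has the root as its \emph{only} sink and you already observed that the root is an original vertex, no edge-vertex can be a sink.
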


A face of a map with a fractional orientation is called \emph{clockwise} if it has no edge oriented completely counterclockwise. \cref{thm:fracPropp} leads to \cref{cor:fracPropp}:

\begin{cor}\label{cor:fracPropp}
The dual-geodesic half-orientation of a map is the unique bicolorable half-orientation of this map with no clockwise face.
\end{cor}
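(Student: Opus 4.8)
The plan is to deduce this corollary from \cref{thm:fracPropp} together with \cref{cor:lat}, by transporting everything through the face-doubling correspondence. First I would recall that, by definition, half-orientations of $m$ are in bijection with doubled orientations of $m^\parallel$ --- that is, with those orientations of $m^\parallel$ in which no edge-face is clockwise --- and that under this bijection bicolorable half-orientations of $m$ correspond exactly to the bicolorable orientations of $m^\parallel$ that are doubled.

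Second, the key fact to establish is a dictionary for clockwiseness: if $f$ is a face of $m$ and $\hat f$ the corresponding (non-edge) face of $m^\parallel$, then $f$ is clockwise for a half-orientation $o$ of $m$ (in the sense defined just before the corollary, i.e.\ no incident edge is oriented completely counterclockwise) if and only if $\hat f$ is clockwise in the usual sense for the orientation $\hat o$ of $m^\parallel$ associated to $o$. This is pure bookkeeping once the bijection of the first step is written out explicitly: each edge $e$ of $m$ incident to $f$ contributes to the boundary of $\hat f$ exactly one of its two copies $e_1,e_2$ in $m^\parallel$, and one checks by cases (according to whether $e$ is bi-oriented or one-way in $o$) that this copy is oriented counterclockwise around $\hat f$ precisely when $e$ is completely counterclockwise around $f$. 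Concretely, a bi-oriented edge of $m$ corresponds to a counterclockwise edge-face and is never completely counterclockwise around either of its incident faces, while a one-way edge of $m$ corresponds to one of the two ``parallel'' configurations of its copies and is completely counterclockwise around exactly one of its two incident faces --- and the two descriptions agree on both sides of $e$.

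Third, with this dictionary in hand the argument closes quickly. By \cref{cor:lat} applied to the bicolorable map $m^\parallel$, there is a unique bicolorable orientation of $m^\parallel$ with no clockwise face at all, and it is the dual-geodesic orientation of $m^\parallel$; by \cref{thm:fracPropp} this orientation is moreover a doubled orientation, hence corresponds to a half-orientation of $m$, which is by definition the dual-geodesic half-orientation. Conversely, let $o$ be any bicolorable half-orientation of $m$ with no clockwise face. Its associated orientation $\hat o$ of $m^\parallel$ is bicolorable; it has no clockwise edge-face because it is a doubled orientation, and no clockwise non-edge face by the dictionary of the second step, hence no clockwise face whatsoever. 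By the uniqueness in \cref{cor:lat}, $\hat o$ is the dual-geodesic orientation of $m^\parallel$, so $o$ is the dual-geodesic half-orientation of $m$.

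I expect the only genuine work to be the second step, the clockwise dictionary: it is elementary but requires fixing orientation conventions carefully (clockwise versus counterclockwise around an edge-face as opposed to around the two faces flanking it, and the left/right conventions inherited from the orientation of the surface), so that the finitely many cases of the correspondence between the state of $e$ in $m$ and the states of $e_1,e_2$ in $m^\parallel$ all match up. Everything else is a direct application of the already-proven lattice statements.
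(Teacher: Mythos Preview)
Your proposal is correct and is precisely the natural way to fill in what the paper leaves implicit: the paper gives no explicit proof, merely stating that \cref{thm:fracPropp} ``leads to'' \cref{cor:fracPropp}. Your route --- identifying bicolorable half-orientations of $m$ with bicolorable doubled orientations of $m^\parallel$, invoking \cref{cor:lat} on the bicolorable map $m^\parallel$, and then translating the no-clockwise-face condition back through the dictionary --- is exactly the intended argument, and your identification of the clockwise dictionary as the only step requiring care is accurate (the paper's definition of ``clockwise'' for half-orientations is chosen precisely so that this dictionary holds, but the verification is indeed the case-check you outline).
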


\begin{algorithm}[h!]
\caption{the fractional opening algorithm}
\label{alg:fracOp}
\begin{algorithmic} 
\REQUIRE A map $m$ embedded on a surface $\mathcal{S}$, rooted at a corner $c_0$, along with its dual-geodesic half-orientation.
\ENSURE A half-oriented blossoming embedded graph $b=\text{open}(m)$, embedded on $\mathcal{S}$.
\STATE Set $c=c_0$, $b=\emptyset$, and $E_V=\emptyset$ ($E_V$ is the set of visited edges). 
\REPEAT
\STATE $e=\nextE(c)$.
\IF{$e\notin E_V$ and $e$ is oriented toward $\vertex(c)$ or bi-oriented}
\STATE add $e$ to $E_V$
\STATE add $e$ to $b$
\STATE $c\leftarrow\nextF(c)$
\ELSIF{$e\notin E_V$ and $e$ is fully outgoing from $\vertex(c)$}
\STATE add $e$ to $E_V$
\STATE Add a bud to $b$ in place of $e$.
\STATE $c\leftarrow\nextV(c)$
\ELSIF{$e\in E_V$ and $e$ is fully oriented toward $\vertex(c)$}
\STATE Add a leaf to $b$ in place of $e$.
\STATE $c\leftarrow\nextV(c)$
\ELSIF{$e\in E_V$ and $e$ is outgoing from $\vertex(c)$ or bioriented}
\STATE $c\leftarrow\nextF(c)$
\ENDIF
\UNTIL {$c=c_0$}
\end{algorithmic}
\end{algorithm}

We now redefine the opening algorithm (see \cref{alg:fracOp}), by always considering that a bi-oriented edge is visited backward first. This roughly amounts to applying the classical opening algorithm to the face-doubled version of the map. Note that this algorithm can still be seen as the dual of a tour of a breadth-first-search exploration tree. The closing algorithm remains the same.

We redefine some properties to match fractional orientations. A unicellular map is called \emph{well-half-oriented} if, in a tour of the face, the first occurrence of any edge is either oriented backward or bi-oriented. The definition of a \emph{well-labeled} map is the same, with the additional rule that the labels of corners adjacent around a vertex and separated by a bi-oriented edge have to be equal.

The set of well-rooted well-labeled well-half-oriented blossoming unicellular maps, is denoted $\mathcal{OG}$. We count maps of $\mathcal{M}$ and $\mathcal{OG}$ by vertex degrees of any parity (unlike bicolorable maps, that we counted earlier only by even vertex degrees), so that for instance $M(\mathbf{z})=M(z_1,z_2,\cdots)=\sum_{m\in\mathcal{M}}\prod_{k=1}^{\infty}z_k^{v_{k}(m)}$.

We now state the generalization of \cref{thm:bijCut} to general maps.

\begin{thm}
The opening algorithm on a dual-geodesically half-oriented map is a weight-preserving bijection from $\mathcal{M}_g$ to $\mathcal{OG}_g$, whose reverse is the closing algorithm. Therefore, $\mathcal{M}_g(\mathbf{z})=\mathcal{OG}_g(\mathbf{z}).$
\end{thm}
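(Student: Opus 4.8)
The plan is to reduce this statement to the already-established bicolorable case, \cref{thm:bijCut}, by passing through the face-doubled map. The key observation is that a dual-geodesic half-orientation of $m$ is, by construction, precisely a face-doubled orientation of $m^\parallel$, which is the dual-geodesic orientation of the bicolorable map $m^\parallel$ (by \cref{thm:fracPropp} and \cref{cor:fracPropp}). So the starting point is a genus-preserving correspondence between maps $m$ with their dual-geodesic half-orientation and a subclass of bicolorable maps (those of the form $m^\parallel$) with their dual-geodesic orientation; the face-doubled map always lies in $\mathcal{BC}_g$, and $m$ is recovered from $m^\parallel$ by merging each pair of parallel edges. The additional constraint that no edge-face is clockwise is automatic for the dual-geodesic orientation by \cref{cor:lat}, so no information is lost.

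**Key steps.** First I would make precise that $m\mapsto m^\parallel$, equipped with its dual-geodesic orientation, is a bijection onto the set of bicolorable maps whose edge set partitions into parallel pairs bounding edge-faces — equivalently, bicolorable maps equipped with a fixed-point-free involution on edges pairing each edge with a parallel neighbour — and that this bijection is genus-preserving and transports the half-orientation of $m$ to the dual-geodesic orientation of $m^\parallel$. Second, I would run the classical opening algorithm (\cref{alg:opening}) on $m^\parallel$ and compare it, step by step, with the fractional opening algorithm (\cref{alg:fracOp}) run on $m$: the convention that a bi-oriented edge of $m$ is visited backward first matches the behaviour of the two parallel copies in $m^\parallel$ (one oriented each way, forming a non-clockwise edge-face), so the two walks on the respective corner maps are in lockstep, and $\cut{m}$ is obtained from $\cut{m^\parallel}$ by the same edge-merging operation. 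Third, I would check that this merging sends $\mathcal{O}_g$ (the image of $\mathcal{BC}_g$ under opening) exactly onto $\mathcal{OG}_g$: well-rootedness and well-orientation are preserved verbatim since they are phrased on the contour word / face tour, which is unchanged; the well-labeling of $m^\parallel$ descends to a well-labeling of $m$ in which the parallel-pair edges become the bi-oriented edges with equal labels on both sides, matching the redefined notion of \cref{def:wor}'s fractional analogue. Finally, weight-preservation is immediate since vertex degrees (now of any parity) are untouched by doubling or merging, and the inverse is the closing algorithm exactly as in \cref{thm:bijCut}, because the planar matching of stems is still unique.

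**Main obstacle.** The delicate point is the bookkeeping around bi-oriented edges: I must verify that in $m^\parallel$ the two parallel copies of an edge always appear in the opening in a way that collapses cleanly — i.e. that the edge-face between them is non-clockwise exactly when the dual-geodesic condition holds, and that when the fractional opening "follows a bi-oriented edge backward", the classical opening on $m^\parallel$ indeed traverses one copy into the edge-face and then out along the other, rather than, say, cutting one copy into a bud. This requires a careful local case analysis of the four branches of \cref{alg:fracOp} against the corresponding situations for the two parallel edges in \cref{alg:opening}, using that the edge-face is non-clockwise and hence its two edges are oriented oppositely. Everything else — the lattice-theoretic input, genus-preservation, weight-preservation, invertibility — follows formally from the bicolorable case once this local matching is nailed down; so I expect the proof to be short, essentially a reduction, with this compatibility check being the only place real work happens. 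One should also confirm that every bicolorable map arising as some $m^\parallel$ is hit, and conversely that the opening of $m$ never produces a unicellular map that fails to be the merge of an element of $\mathcal{O}_g$, but these follow from the bijectivity already in hand plus the observation that the doubling/merging operations are mutually inverse on the relevant classes.
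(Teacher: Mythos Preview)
Your proposal is correct and matches the approach the paper sketches: the paper does not give a formal proof of this theorem but explicitly states that the fractional opening ``roughly amounts to applying the classical opening algorithm to the face-doubled version of the map,'' which is exactly your reduction via $m\mapsto m^\parallel$ to \cref{thm:bijCut}. The local compatibility check you flag as the main obstacle is precisely the content of that remark, and the paper's further observation that the algorithm is still the dual of a BFS tour confirms that the same proof structure (\cref{lem:BCtoO,lem:OtoBC}) carries over.
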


\section{Acknowlegdement}
I would like to thank my supervisors Marie Albenque and Vincent Pilaud for many discussions about both the content and presentation of the present work. 

I also thank Guillaume Chapuy, Gilles Schaeffer, and Éric Fusy for fruitful discussion about their own work and the general state of the art.

\bibliographystyle{abbrv}
\bibliography{../../bibliography}

\end{document}